\newcommand{\lcm}{{\textup{lcm}}}
\theoremstyle{plain}
\newtheorem{thm}{Theorem}[section]
\newtheorem{cor}[thm]{Corollary}
\newtheorem{lem}[thm]{Lemma}
\newtheorem{prop}[thm]{Proposition}
\newtheorem{property}[thm]{Property}
\newtheorem*{xthm}{Theorem}
\theoremstyle{definition}
\newtheorem{defn}[thm]{Definition}
\theoremstyle{remark}
\newtheorem{rem}[thm]{Remark}
\numberwithin{equation}{section}
\title{Superabundant numbers, their subsequences and the Riemann hypothesis}
\author{Sadegh Nazardonyavi, Semyon Yakubovich\\
\emph{Departamento de Matem\'{a}tica, Faculdade de Ci\^{e}ncias},\\ \emph{Universidade do Porto, 4169-007 Porto, Portugal}}
\begin{document}
\date{}
\maketitle
\begin{center}
Abstract
\end{center}

Let $\sigma(n)$ be the sum of divisors of a positive integer $n$. Robin's theorem states that the Riemann hypothesis is equivalent to the inequality $\sigma(n)<e^\gamma n\log\log n$ for all $n>5040$ ($\gamma$ is Euler's constant). It is a natural question in this direction to find a first integer, if exists, which violates this inequality. Following this process, we introduce a new sequence of numbers and call it as extremely abundant numbers. Then we show that the Riemann hypothesis is true, if and only if, there are infinitely many of these numbers. Moreover, we investigate some of their properties together with superabundant and colossally abundant numbers.


\section{Introduction}
There are several equivalent statements to the famous Riemann hypothesis(\S Introduction). Some of them are related to the asymptotic behavior of   arithmetic functions. In particular, the known Robin's criterion (theorem, inequality, etc.) deals with the upper bound of $\sigma(n)$. Namely,
\begin{xthm}\textup{\textbf{(Robin)}}
The Riemann hypothesis is true, if and only if,
\begin{equation}\label{Ro}
\forall n\geq5041,\qquad    \frac{\sigma(n)}{n\log\log n}<e^\gamma,
\end{equation}
where $\sigma(n)=\displaystyle{\sum_{d|n}d}$ and $\gamma$ is Euler's constant (\cite{Rob}, Th. 1).
\end{xthm}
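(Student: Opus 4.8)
The plan is to prove both implications by reducing the universal statement over all integers to a sparse family of ``champion'' numbers and then controlling the resulting expression by the error term in the prime counting function. First I would observe that $\sigma(n)/n$ attains its records precisely on the superabundant numbers, and among those on the colossally abundant numbers, defined by an extremal property of $\sigma(n)/n^{1+\varepsilon}$; hence if Robin's inequality $\sigma(n)/(n\log\log n)<e^\gamma$ is ever violated for some $n\geq 5041$, it must already be violated by such a champion. This lets me replace the condition \eqref{Ro} over all $n$ by an asymptotic analysis along the colossally abundant sequence, whose factorization is completely explicit: $n=\prod_{p\leq x}p^{a_p}$ with exponents decreasing in $p$ and largest prime factor $x\rai$.

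For such $n$ I would write $\log n=\theta(x)+(\text{lower order})$, where $\theta(x)=\sum_{p\leq x}\log p$, and expand $\log(\sigma(n)/n)=\sum_{p\leq x}\log\bigl(1+\tfrac1p+\cdots\bigr)$. Splitting this sum into its dominant Mertens part $\sum_{p\leq x}\log\frac{1}{1-1/p}$ and a convergent tail, Mertens' third theorem yields the leading term $e^\gamma\log x\sim e^\gamma\log\log n$, which already recovers Gronwall's result $\limsup_n \sigma(n)/(n\log\log n)=e^\gamma$ and explains the appearance of the constant $e^\gamma$. The crux is then to isolate the secondary term: the difference between $\log\bigl(\sigma(n)/(n\log\log n)\bigr)$ and $\gamma$ is governed, up to lower-order contributions, by the error term $\theta(x)-x$ in the prime number theorem.

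At this point the Riemann hypothesis enters in two opposite ways. Assuming RH, I would invoke the explicit bound $\theta(x)=x+O(\sqrt{x}\log^2 x)$; feeding this into the secondary term shows it is eventually negative, so $\sigma(n)/(n\log\log n)<e^\gamma$ holds for all sufficiently large colossally abundant $n$, and hence for all $n$ beyond a computable threshold. Conversely, if RH is false there is a zero $\rho=\beta+i\delta$ with $\beta>1/2$, and by the explicit formula together with Landau-type $\Omega$-theorems the error $\theta(x)-x$ oscillates with amplitude $\Omega(x^{\beta-\varepsilon})$; these oscillations are large enough to push the secondary term positive infinitely often along the champions, producing infinitely many violations of \eqref{Ro}.

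The main obstacle I anticipate is the effective, two-sided control of this secondary term. One must (i) show that the reduction to colossally abundant numbers loses nothing quantitatively, so that a violation anywhere forces a violation among the champions; (ii) make all implied constants explicit enough that the asymptotic analysis takes over precisely past the range where Robin's inequality can be checked by direct computation, which is what pins the last exception down to $n=5040$; and (iii) in the failing-RH direction, convert the analytic $\Omega$-result for $\theta(x)-x$ into an honest statement about the discrete sequence of champions, ensuring the oscillations are actually sampled by values of $x$ arising as largest prime factors of colossally abundant numbers. Steps (ii) and (iii) are where the delicate analytic number theory resides; the remainder is careful but routine bookkeeping of elementary and Mertens-type estimates.
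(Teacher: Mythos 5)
The paper does not actually prove this statement: it is Robin's theorem, imported from \cite{Rob} (Th.~1) as the starting point of the whole article, so there is no internal proof to compare against. Judged on its own terms, your outline does reproduce the architecture of Robin's 1984 argument --- reduction to colossally abundant numbers via the maximality property of $f$ (this is exactly the proposition of Robin quoted later in the paper as inequality \eqref{f(n)<max f(N), f(N')}), the Mertens expansion producing the constant $e^\gamma$ and Gronwall's theorem, the identification of the secondary term with $\vartheta(x)-x$, Schoenfeld-type explicit bounds under RH, and an oscillation theorem in the converse direction. The strategy is the correct one.

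But as written this is a road map, not a proof: each step you label as delicate is precisely where the content of the theorem lives, and none is carried out. Three concrete gaps. First, the reduction ``a violation anywhere forces a violation among the champions'' is not automatic from the extremal definition of CA numbers; it is a separate lemma (if $N\leq n\leq N'$ with $N,N'$ consecutive CA numbers then $f(n)\leq\max\{f(N),f(N')\}$) whose proof exploits the explicit piecewise structure of the CA construction, and you assert it without argument. Second, in the RH direction, ``eventually negative, hence for all $n$ beyond a computable threshold'' conceals the entire explicit-constants computation: the secondary term is a difference of competing contributions each of relative size roughly $(\log x)^{2}/\sqrt{x}$, so the sign is decided only by the sharp numerical forms of the $\vartheta$ and Mertens estimates, not by the $O$-statement you invoke; without that, the threshold is not computable and the exceptional set cannot be pinned to $n\leq 5040$. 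Third, and most seriously, in the converse direction an $\Omega_{\pm}(x^{\beta-\varepsilon})$ result for $\vartheta(x)-x$ at unspecified real $x$ does not by itself yield infinitely many integers violating \eqref{Ro}: Robin does not ``sample'' an oscillation theorem at the largest prime factors of champions, but instead applies Landau's theorem to a Dirichlet integral built directly from the $\varepsilon$-parametrized family of CA numbers, so that the sign changes are realized along the champion sequence by construction. You name this as obstacle (iii) but propose no mechanism for it, and it is not routine bookkeeping.
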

Throughout this paper, as Robin used in \cite{Rob}, we let
\begin{equation}\label{f(n)}
f(n)=\frac{\sigma(n)}{n\log\log n}.
\end{equation}

In 1913, Gronwall \cite{GR} in his study of asymptotic maximal size for the sum of divisors of $n$, found that the order of $\sigma(n)$ is always "very nearly $n$" (\cite{HW}, Th. 323), proving
\begin{xthm}\textup{\textbf{(Gronwall)}}
\begin{equation}\label{GR}
    \limsup_{n\rightarrow\infty}f(n)=e^\gamma.
\end{equation}
\end{xthm}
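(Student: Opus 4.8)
The plan is to prove the two inequalities $\limsup_{n\rai} f(n)\ge e^\gamma$ and $\limsup_{n\rai} f(n)\le e^\gamma$ separately, the whole argument resting on two classical inputs: Mertens' third theorem, $\prod_{p\le y}(1-1/p)^{-1}\sim e^\gamma\log y$, and the prime number theorem in the form $\theta(x)=\sum_{p\le x}\log p\sim x$. The starting observation is the Euler-product evaluation $\sigma(n)/n=\prod_{p^a\|n}(1-p^{-(a+1)})/(1-p^{-1})$, which immediately yields the fundamental bound
\begin{equation}
\frac{\sigma(n)}{n}<\prod_{p\mid n}\frac{1}{1-p^{-1}}=\prod_{p\mid n}\frac{p}{p-1}.
\end{equation}
Thus $\sigma(n)/n$ depends essentially only on the \emph{set} of primes dividing $n$, and the task reduces to comparing this product against $\log\log n$.

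For the upper bound, since $p\mapsto p/(p-1)$ is decreasing, the product $\prod_{p\mid n}p/(p-1)$ over the $k=\omega(n)$ distinct prime divisors of $n$ is largest when those primes are the first $k$ primes $p_1<\cdots<p_k$. Hence $\sigma(n)/n<\prod_{i=1}^{k}p_i/(p_i-1)$, and by Mertens this is $\sim e^\gamma\log p_k$. On the other hand $n\ge p_1\cdots p_k$, so $\log n\ge\theta(p_k)\sim p_k$ by the prime number theorem, giving $\log\log n\ge(1+o(1))\log p_k$. Dividing, I obtain $f(n)\le(1+o(1))e^\gamma$ as $\omega(n)\rai$. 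To finish I would fix $\varepsilon>0$: for $\omega(n)$ large the displayed ratio is below $e^\gamma+\varepsilon$, while for $\omega(n)$ bounded by some $K_0$ one has $\sigma(n)/n\le\prod_{i=1}^{K_0}p_i/(p_i-1)$, a constant, so $f(n)\to0$ and in particular $f(n)<e^\gamma+\varepsilon$ once $n$ is large. Both regimes give $f(n)<e^\gamma+\varepsilon$ for all sufficiently large $n$.

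For the lower bound I would exhibit an explicit sequence on which $f$ approaches $e^\gamma$. Taking the primorial $N_x=\prod_{p\le x}p$ and a slowly growing exponent $m=m(x)\rai$, set $n=N_x^{\,m}$, so that $\sigma(n)/n=\prod_{p\le x}(1-p^{-(m+1)})/(1-p^{-1})$. The correction $\prod_{p\le x}(1-p^{-(m+1)})$ tends to $1$ uniformly as $m\rai$ (it is bounded below by $1-(\zeta(m+1)-1)$), so $\sigma(n)/n\sim e^\gamma\log x$ by Mertens. Since $\log n=m\,\theta(x)\sim m x$, I get $\log\log n=\log x+\log m+o(1)$, which is $\sim\log x$ provided $m$ grows slowly enough that $\log m=o(\log x)$ (for instance $m=\lfloor\log x\rfloor$). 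Then $f(n)\sim e^\gamma\log x/\log x=e^\gamma$, establishing $\limsup f(n)\ge e^\gamma$.

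The main obstacle I anticipate is the upper bound: one must convert the asymptotic equivalences from Mertens' theorem and the prime number theorem into honest inequalities with controlled $o(1)$ terms, and handle cleanly the interference between $\omega(n)$ and the size of $n$ (a number with few but enormous prime factors has large $\log\log n$ yet small $\sigma(n)/n$, so it is harmless, but this must be argued). The squarefree-versus-prime-power distinction is also a delicate point: a squarefree construction would only yield the constant $6e^\gamma/\pi^2<e^\gamma$, which is precisely why letting the exponent $m\rai$ is indispensable in the lower-bound family.
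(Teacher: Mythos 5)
The paper states Gronwall's theorem without proof, citing only Gronwall's 1913 paper and Hardy--Wright, Theorem 323, so there is no internal argument to compare yours against; judged on its own, your proof is the classical one and is correct. For the upper bound the point that makes the two-regime split airtight --- and you do supply it, though it is worth stating explicitly --- is that the chain $f(n)\le\bigl(\prod_{i\le k}p_i/(p_i-1)\bigr)/\log\log n\le\bigl(\prod_{i\le k}p_i/(p_i-1)\bigr)/\log\vartheta(p_k)$ produces a bound depending \emph{only} on $k=\omega(n)$ and tending to $e^\gamma$, so the $o(1)$ is uniform over all $n$ with $\omega(n)=k$; together with the observation that $\omega(n)\le K_0$ forces $\sigma(n)/n$ below a fixed constant and hence $f(n)\to 0$, this leaves no gap. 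The lower bound via $n=N_x^{\,m}$ with $m=\lfloor\log x\rfloor$ is also sound: the correction $\prod_{p\le x}(1-p^{-(m+1)})\ge 1-(\zeta(m+1)-1)\to 1$ uniformly in $x$, and $\log\log n=\log x+\log m+o(1)\sim\log x$, so $f(n)\to e^\gamma$ along this sequence; your closing remark that a squarefree family would only reach $6e^\gamma/\pi^2$ correctly identifies why the growing exponent is indispensable. For reference, the Hardy--Wright proof of the upper bound splits the prime divisors of $n$ by size ($p\le\log n$ versus $p>\log n$) instead of conditioning on $\omega(n)$; the two decompositions are interchangeable and neither is simpler than the other, so your version is a perfectly acceptable substitute for the cited source.
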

Ramanujan in his unpublished manuscript \cite{Ram.N} proved that if $N$ is a generalized superior highly composite number, i.e. a number of CA which we introduce in the next section, then under the Riemann hypothesis
\begin{align*}
\liminf_{N\rightarrow\infty}\left(\frac{\sigma(N)}{N}-e^{\gamma}\log\log N\right)\sqrt{\log N}\geq& -e^\gamma(2\sqrt{2}+\gamma-\log4\pi)\approx-1.558,\\[5pt]
\text{and}&\\[5pt]
\limsup_{N\rightarrow\infty}\left(\frac{\sigma(N)}{N}-e^{\gamma}\log\log N\right)\sqrt{\log N}\leq& -e^\gamma(2\sqrt{2}-4-\gamma+\log4\pi)\approx-1.393.
\end{align*}
Later in 1984, Robin \cite{Rob} demonstrated that
\begin{equation}\label{rob2}
   f(n)\leq e^\gamma+\frac{0.6482\ldots}{(\log\log n)^2}\ ,\qquad (n\geq3),
\end{equation}
where $0.6482\ldots\approx(\frac{7}{3}-e^\gamma\log\log12)\log\log12$ and the left hand side of (\ref{rob2}) attains its maximum at $n=12$. In the same spirit, Lagarias \cite{Lag} proved that the Riemann hypothesis is equivalent to
$$
\sigma(n)\leq e^{H_n}\log H_n+H_n,\qquad(n\geq1),
$$
where $H_n=\sum_{j=1}^n 1/j$ and it is called the $n$-th harmonic number. Investigating upper and lower bounds of arithmetic functions, Landau (\cite{Land}, pp. 216-219) obtained the following limits:
$$
\liminf_{n\rightarrow\infty}\frac{\varphi(n)\log\log n}{n}=e^{-\gamma}\ ,\qquad \limsup_{n\rightarrow\infty}\frac{\varphi(n)}{n}=1,
$$
where $\varphi(n)$ is the Euler totient function, which is defined as the number of positive integers not exceeding $n$ that are relatively prime to $n$, and can also be expressed as a product extended over the distinct prime divisors of $n$ (see \cite[Th. 2.4]{Apostol}); i.e.
$$
\varphi(n)=n\prod_{p\mid n}\left(1-\frac1p\right).
$$
Furthermore, Nicolas \cite{Nic} proved that, if the Riemann hypothesis is true, then we have for all $k\geq2$,
\begin{equation}\label{nicolas ineq}
    \frac{N_k}{\varphi(N_k)\log\log N_k}>e^\gamma,
\end{equation}
where $N_k=\prod_{j=1}^k p_j$ and $p_j$ is the $j$-th prime. On the other hand, if the Riemann hypothesis is false, then there are infinitely many $k$ for which (\ref{nicolas ineq}) is true, and infinitely many $k$ for which (\ref{nicolas ineq}) is false.\\

Compared to numbers $N_k$ which are the smallest integers that maximize $n/\varphi(n)$, there are integers which play this role for $\sigma(n)/n$ and they are called superabundant numbers. In other words, $n$ is a superabundant number   (\cite{Al}, see also \cite{Ram.N}) if
\begin{equation*}\label{sa}
    \frac{\sigma(n)}{n}>\frac{\sigma(m)}{m}\qquad\mbox{for all}\  m<n.
\end{equation*}
Briggs \cite{Briggs} describes a computational study of the successive maxima of the relative sum-of-divisors function $\sigma(n)/n$. He also studies the density of these numbers. W\'{o}jtowicz \cite{Wojtowicz} showed that the values of $f$ are close to 0 on a set of asymptotic density 1. Another study on Robin's inequality can be found in \cite{Choie}; it was shown that Riemann hypothesis(RH) holds true, if and only if, every natural number divisible by a fifth power greater than 1 satisfies Robin's inequality.

In 2009, Akbary and Friggstad \cite{Ak} established the following interesting theorem which enables us to limit our attention to a narrow sequence of positive integers to find a probable counterexample to (\ref{Ro}).
\begin{xthm}[\cite{Ak}, Th. 3]
If there is any counterexample to Robin's inequality, then the least such counterexample is a superabundant number.
\end{xthm}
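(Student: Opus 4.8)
The plan is to argue by contradiction, exploiting the strict monotonicity of $\log\log$ to convert a failure of superabundance into a \emph{smaller} counterexample. Write $f(n)=\sigma(n)/(n\log\log n)$ as in (\ref{f(n)}), and let $n$ be the least counterexample to Robin's inequality, so that $n\geq5041$, $f(n)\geq e^\gamma$, while $f(k)<e^\gamma$ for every $k$ with $5041\leq k<n$. Suppose, for contradiction, that $n$ is not superabundant. Then by definition there exists $m<n$ with $\sigma(m)/m\geq\sigma(n)/n$.

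The key step is to show that this $m$ is itself a counterexample to (\ref{Ro}). Since $\sigma(n)/n\geq e^\gamma\log\log 5041>3$, any competitor $m$ satisfies $\sigma(m)/m>3$ and hence $m\geq3$, which guarantees $\log\log m>0$ and puts us in the range where $t\mapsto\log\log t$ is strictly increasing. From $m<n$ we then get $\log\log m<\log\log n$, and combining this with $\sigma(m)/m\geq\sigma(n)/n$ yields
$$
f(m)=\frac{\sigma(m)/m}{\log\log m}>\frac{\sigma(n)/n}{\log\log n}=f(n)\geq e^\gamma .
$$
Thus $m$ violates (\ref{Ro}). By the minimality of $n$ we cannot have $5041\leq m<n$, so necessarily $m\leq5040$.

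It remains to rule out $m\leq5040$, and this is where I expect the only real obstacle to lie, since $5040$ is precisely the largest of the known small exceptions to (\ref{Ro}). Because $5040$ is superabundant, $\max_{k\leq5040}\sigma(k)/k=\sigma(5040)/5040=403/105$, so the inequality $\sigma(m)/m\geq\sigma(n)/n$ forces $\sigma(n)/n\leq403/105$. On the other hand $f(n)\geq e^\gamma$ gives $\sigma(n)/n\geq e^\gamma\log\log n$, and these two bounds together confine $n$ to an explicit finite range $5041\leq n\leq n_0$ with $n_0=\exp\exp\bigl(403/(105\,e^\gamma)\bigr)$, a number slightly below $5600$. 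The proof is then completed by checking directly that none of the finitely many integers in $[5041,n_0]$ satisfies $f(n)\geq e^\gamma$ (equivalently, by invoking the computational verification of Robin's inequality well beyond $n_0$); this contradicts the assumption that $n$ is a counterexample and forces $n$ to be superabundant. The whole difficulty is thus localized to this last finite verification around $5040$--$5600$; everything preceding it is driven purely by the monotonicity of $\log\log$ and the minimality of $n$.
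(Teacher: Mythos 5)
Your argument is correct, but note that the paper does not actually prove this statement: it is quoted from Akbary and Friggstad with a citation only, so there is no in-paper proof to compare against. The closest internal analogue is the proof of Theorem \ref{ext}, which runs the same template for extremely abundant numbers (minimal counterexample plus a finite computational check just above the threshold $5040$). Your steps all check out: the negation of superabundance produces $m<n$ with $\sigma(m)/m\geq\sigma(n)/n$; the bound $\sigma(n)/n\geq e^\gamma\log\log 5041>3$ forces $m\geq3$, so $\log\log m>0$ and the division preserving strict inequality is legitimate; minimality of $n$ pushes $m$ down to $m\leq5040$; and since $5040$ is superabundant, $\max_{k\leq5040}\sigma(k)/k=\sigma(5040)/5040=403/105$, which caps $\sigma(n)/n$ and hence $n\leq\exp\exp\left(403/(105e^\gamma)\right)\approx5590$. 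The residual finite verification on $[5041,5590]$ is of exactly the same nature as the computation the paper itself performs for $5040<n\leq10080$ in proving Theorem \ref{ext}, so treating it as a computation rather than deriving it analytically is acceptable (indeed unavoidable, since $5040$ genuinely violates the inequality). A mild variant worth noting: taking $m$ to be the largest superabundant number not exceeding $n$, so that $\sigma(m)/m=\max_{k\leq n}\sigma(k)/k$ automatically, the case $m\leq5040$ forces $m=5040$ and hence $n<10080$ by the doubling property of consecutive superabundant numbers (Proposition \ref{an+1<=2an}); your version instead localizes the computation to the smaller window below $5600$, which is a small improvement in the amount of checking required.
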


Unfortunately, to our knowledge, there is no known algorithm to compute superabundant number. Also (see \cite{Al}) the number of superabundants not exceeding $x$  is more than
$$
c\,\frac{\log x\log\log x}{(\log\log\log x)^2},
$$
and in \cite{Erd} it was even proved that for every $\delta<5/48$ this number exceeds
$$
(\log x)^{1+\delta},\qquad(x>x_0).
$$
As a natural question in this direction, it is interesting to determine the least number, if exists, that violates inequality (\ref{Ro}) which belongs to a thinner sequence of positive integers, and study its properties. Following this process, we introduce a new sequence of numbers and call its elements as \emph{extremely abundant numbers}. We will present in the sequel some of their properties. Surprisingly enough, it will be proved that the least number, if any, should be an extremely abundant number. Therefore, we will establish another criterion, which is equivalent to the Riemann hypothesis.

Before starting the main definition and results we mention the recent paper by Caveney et al. \cite{Caveney}. They defined a positive integer $n$ as an extraordinary number, if $n$ is composite and $f(n)\geq f(kn)$ for all
$$
k\in \mathbb{N}\cup\{1/p:\ p\ \text{is a prime factor of}\ n\}.
$$
Under these conditions, they showed that the smallest extraordinary number is $n = 4$. Then they proved that the Riemann hypothesis is true, if and only if, 4 is the only extraordinary number. For more properties of these numbers and comparison them with superabundant and colossally abundant numbers we refer the reader to \cite{Caveney-2}.
\section{Extremely abundant numbers}
\begin{defn}\label{extn}
A positive integer $n$ is an extremely abundant number, if either $n=10080$ or $n>10080$ and
\begin{equation*}\label{extn1}
\forall m\quad\text{\upshape{s.t.}} \quad10080\leq m<n,\qquad    \frac{\sigma(m)}{m\log\log m}<\frac{\sigma(n)}{n\log\log n}.
\end{equation*}
\end{defn}
Here 10080 has been chosen as the smallest superabundant number greater than 5040. In Table \ref{tabl} we list the first 20 extremely abundant numbers. To find them we used a list of superabundant numbers provided in \cite{Kil} and \cite{Noe}.

First let us call a positive integer $n$ (cf. \cite{Al} and \cite{Ram.N})
\begin{enumerate}[\upshape (i)]
    \item  \emph{colossally abundant}, if for some $\varepsilon>0$,
   \[\frac{\sigma(n)}{n^{1+\varepsilon}}>\frac{\sigma(m)}{m^{1+\varepsilon}},\qquad(m<n)\qquad\text{and}\qquad
    \frac{\sigma(n)}{n^{1+\varepsilon}}\geq\frac{\sigma(m)}{m^{1+\varepsilon}},\qquad (m>n);
    \]
  \item \emph{highly composite}, if $d(n)>d(m)$ for all $m<n$, where $d(n)=\sum_{d|n}1$;
  \item \emph{generalized superior highly composite}, if there is a positive number $\varepsilon$ such that
  $$
  \frac{\sigma_{-s}(n)}{n^\varepsilon}\geq\frac{\sigma_{-s}(m)}{m^\varepsilon},\qquad(m<n)\qquad\text{and}\qquad \frac{\sigma_{-s}(n)}{n^\varepsilon}>\frac{\sigma_{-s}(m)}{m^\varepsilon},  \qquad(m>n),
  $$
  where $\sigma_{-s}(n)=\sum_{d\mid n}d^{-s}$.
\end{enumerate}
The study of these classes of numbers was initiated by Ramanujan, in an unpublished part of his 1915 work on highly composite numbers (\cite{Ram.h}, \cite{Ram.N}, \cite{Ram.c}). More precisely, he defined rather general classes of these numbers. For instance, he defined generalized highly composite numbers, containing as a subset superabundant numbers (\cite{Ram.h}, section 59). Moreover, he introduced the generalized superior highly composite numbers, including as a particular case colossally abundant numbers. For more details about these numbers see \cite{Al}, \cite{Erd} and \cite{Ram.N}.\\

We denote the following sets of integers by
\begin{align*}
  SA =&\ \{n:\ \ n \mbox{\ is superabundant}\}, \\
  CA =&\ \{n:\ \ n \mbox{\ is colossally abundant}\}, \\
  XA =&\ \{n:\ \ n \mbox{\ is extremely abundant}\}.
\end{align*}
We also use SA, CA and XA as abbreviations of the corresponding sets. Clearly, $XA\neq CA$ (see Table \ref{tabl}). Indeed, we shall prove that infinitely many numbers of CA are not in XA and, that if RH holds, then infinitely many numbers of XA are in CA.

As an elementary result from the definition of extremely abundant numbers we have
\begin{prop}
The inclusion $XA\subset SA$ holds.
\end{prop}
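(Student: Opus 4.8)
The plan is to show that the record property defining superabundance is inherited from the stronger-looking record property defining extreme abundance. Write $g(n)=\sigma(n)/n$, so that $f(n)=g(n)/\log\log n$ and $n\in SA$ means precisely $g(n)>g(m)$ for every $m<n$. I would first dispose of the base case $n=10080$: by construction $10080$ is the smallest superabundant number exceeding $5040$, so it already lies in $SA$ and there is nothing to prove.

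For an extremely abundant $n>10080$ the key observation is that $\log\log t$ is strictly increasing and positive for $t\ge 3$. Hence for any $m$ with $10080\le m<n$ the defining inequality $f(m)<f(n)$ from Definition~\ref{extn} rearranges to $g(n)>g(m)\,\dfrac{\log\log n}{\log\log m}$, and since $\log\log n>\log\log m>0$ the ratio on the right exceeds $1$, giving $g(n)>g(m)$. This already settles all competitors $m$ in the range $[10080,n)$; in particular, taking $m=10080$ yields $g(n)>g(10080)$.

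It remains to treat the small competitors $m<10080$, and this is the only place where something beyond monotonicity enters. Here I would invoke that $10080\in SA$: by the definition of superabundance $g(10080)>g(m)$ for every $m<10080$, and chaining this with the inequality $g(n)>g(10080)$ from the previous step gives $g(n)>g(m)$. Combining the two ranges shows $g(n)>g(m)$ for all $m<n$, i.e. $n\in SA$, which is the desired inclusion.

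The argument is essentially a one-line manipulation plus a case split, so I do not anticipate a genuine obstacle; the only point requiring care is the transition across the threshold $10080$. Since the hypothesis $f(m)<f(n)$ is assumed only for $m\ge 10080$, the sub-threshold range is not reachable directly and must be bridged through the fact that the threshold value $10080$ is itself superabundant — which is exactly why that particular value was selected in Definition~\ref{extn}.
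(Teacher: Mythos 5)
Your proof is correct and follows essentially the same route as the paper's: use the monotonicity of $\log\log$ to convert $f(m)<f(n)$ into $\sigma(m)/m<\sigma(n)/n$ for $10080\le m<n$, then bridge the range $m<10080$ via the fact that $10080$ is itself superabundant. The only difference is cosmetic (you rearrange the ratio $\log\log n/\log\log m>1$ where the paper multiplies through directly), so there is nothing further to add.
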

\begin{proof}
First, $10080\in SA$. Further, if $n>10080$ and $n\in XA$, then, for $10080\leq m<n$, we have
\[
\frac{\sigma(n)}{n}=f(n)\log\log n>f(m)\log\log m=\frac{\sigma(m)}{m}.
\]\\
In particular, for $m=10080$, we get
$$
\frac{\sigma(n)}{n}>\frac{\sigma(10080)}{10080}.
$$
So that, for $m<10080$, we have
$$
\frac{\sigma(n)}{n}>\frac{\sigma(10080)}{10080}>\frac{\sigma(m)}{m},
$$
since $10080\in SA$. Therefore, $n$ belongs to SA.
\end{proof}

%
Next, motivating our construction of extremely abundant numbers, we will establish the first main result of the paper.
\begin{thm}\label{ext}
If there is any counterexample to Robin's inequality (\ref{Ro}), then the least one is an extremely abundant number.
\end{thm}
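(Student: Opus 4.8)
The plan is to leverage the Akbary--Friggstad theorem together with the fact that $10080$ is the smallest superabundant number exceeding $5040$. Keeping the notation $f(n)=\sigma(n)/(n\log\log n)$ from (\ref{f(n)}), suppose a counterexample to Robin's inequality (\ref{Ro}) exists, and let $n_0$ denote the least one. By the definition of a counterexample, $n_0\geq 5041$ and $f(n_0)\geq e^\gamma$, while the minimality of $n_0$ gives $f(m)<e^\gamma$ for every $m$ with $5041\leq m<n_0$.

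First I would pin down the size of $n_0$. By the Akbary--Friggstad theorem quoted above, $n_0\in SA$. Since $5040\in SA$ and the next superabundant number is $10080$, there is no superabundant number in the interval $[5041,10079]$; as $n_0$ is superabundant and $\geq 5041$, this forces $n_0\geq 10080$. If $n_0=10080$, then $n_0\in XA$ immediately by Definition \ref{extn}, so I may assume $n_0>10080$.

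The remaining task is to verify the defining inequality for $XA$, namely $f(m)<f(n_0)$ for every $m$ with $10080\leq m<n_0$. This is where the minimality of $n_0$ does the work: any such $m$ satisfies $5041\leq m<n_0$ (because $10080>5041$), so by minimality $f(m)<e^\gamma$; on the other hand $f(n_0)\geq e^\gamma$ since $n_0$ is a counterexample. Chaining these yields $f(m)<e^\gamma\leq f(n_0)$, which is exactly the condition required by Definition \ref{extn}. Hence $n_0\in XA$.

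I do not expect a serious obstacle, since the heavy lifting is outsourced to the Akbary--Friggstad theorem, which already confines the least counterexample to $SA$. The only points needing care are (i) justifying $n_0\geq 10080$, handled by the absence of superabundant numbers strictly between $5040$ and $10080$, and (ii) tracking the strict versus non-strict inequalities so that the chain $f(m)<e^\gamma\leq f(n_0)$ stays strict on the left. Both are routine once the structure above is in place.
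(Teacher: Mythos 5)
Your proof is correct, and your closing step --- chaining $f(m)<e^\gamma\leq f(n_0)$ for $10080\leq m<n_0$ against Definition \ref{extn} --- is exactly the paper's. Where you genuinely diverge is in establishing $n_0\geq 10080$: the paper does this by a direct computer verification that Robin's inequality holds for every $n$ with $5040<n\leq 10080$, whereas you outsource it to the Akbary--Friggstad theorem (quoted in the paper's introduction), which confines the least counterexample to $SA$, combined with the fact that $10080$ is the smallest superabundant number exceeding $5040$ --- a fact the paper itself asserts when motivating Definition \ref{extn}. Your route replaces a check of roughly five thousand integers by one structural citation plus a much lighter piece of data about the superabundant sequence, and it exhibits Theorem \ref{ext} transparently as a refinement of Akbary--Friggstad; the paper's route is self-contained modulo the computation and does not lean on that external theorem. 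One small remark: your case $n_0=10080$ is vacuous in practice, since $f(10080)\approx 1.7558<e^\gamma$, but treating it as you do keeps the argument airtight without having to evaluate $f(10080)$ at all, which is consistent with your aim of avoiding computation.
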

\begin{proof}
By doing some computer calculations we observe that there is no counterexample to Robin's inequality (\ref{Ro}) for $5040<n\leq10080$. Now let $n>10080$ be the least counterexample to inequality (\ref{Ro}). For $m$ satisfying $10080\leq m<n$ we have
$$
f(m)<e^\gamma\leq f(n).
$$
Therefore, $n$ is an extremely abundant.Ф
\end{proof}

As we mentioned in Introduction, we will prove an equivalent criterion to the Riemann hypothesis for which the proof is based on Robin's inequality (\ref{Ro}) and Gronwall Theorem. Let $\#A$ denote the cardinal number of a set $A$. The second main result is
\begin{thm}\label{rh}
The Riemann hypothesis is true if and only if $\# XA=\infty$.
\end{thm}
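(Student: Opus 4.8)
The plan is to prove both directions of the equivalence.

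First, suppose the Riemann hypothesis is true. I need to show there are infinitely many extremely abundant numbers. The strategy is to argue by contradiction: assume $XA$ is finite, so it has a largest element, and derive a violation of Gronwall's theorem. By Gronwall's theorem (\ref{GR}), $\limsup_{n\to\infty}f(n)=e^\gamma$, so there exists a sequence of integers along which $f$ approaches $e^\gamma$ from below (below, because under RH Robin's inequality (\ref{Ro}) guarantees $f(n)<e^\gamma$ for all $n\geq5041$). The key observation is that extremely abundant numbers are exactly the record-holders for $f$ among integers $\geq10080$: a number $n>10080$ is in $XA$ precisely when $f(n)>f(m)$ for every $m$ with $10080\leq m<n$. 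So if I track the running maximum $M(n)=\max\{f(m):10080\leq m\leq n\}$, each time this maximum strictly increases at some new integer, that integer is extremely abundant. If $XA$ were finite, the running maximum would stabilize at some finite value $c<e^\gamma$ (strictly less, by Robin under RH), and then $f(n)\leq c<e^\gamma$ for all $n\geq10080$, contradicting $\limsup f(n)=e^\gamma$. Hence $XA$ must be infinite.

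Second, suppose the Riemann hypothesis is false; I must show $XA$ is finite. If RH fails, Robin's criterion tells us there is some counterexample to (\ref{Ro}), and by Theorem \ref{ext} the least counterexample $N_0$ is itself extremely abundant and satisfies $f(N_0)\geq e^\gamma$. The idea is that once an integer achieves $f\geq e^\gamma$, no later integer can ever break the record enough to qualify as extremely abundant, essentially because $f$ cannot stay arbitrarily large. More carefully, I would take the least counterexample $N_0$, which has $f(N_0)\geq e^\gamma$, and argue that any extremely abundant number must be at most $N_0$: for $n>N_0$ to be extremely abundant we would need $f(n)>f(N_0)\geq e^\gamma$, so $f(n)>e^\gamma$; but then I want to show this cannot happen infinitely often in a way compatible with the record-maximum structure, bounding the extremely abundant numbers above.

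The hard part will be the RH-false direction, and specifically pinning down why $XA$ is finite rather than merely why its growth is restricted. The naive statement "$f(n)>e^\gamma$ only finitely often" is false in general (Gronwall only gives $\limsup=e^\gamma$, so $f$ can exceed $e^\gamma$ infinitely often when RH fails), so I cannot simply say record-breakers above $e^\gamma$ are finite. The correct mechanism must instead exploit that Robin's bound (\ref{rob2}), $f(n)\leq e^\gamma+0.6482/(\log\log n)^2$, forces any value of $f(n)$ exceeding $e^\gamma$ to approach $e^\gamma$ as $n\to\infty$; combined with the existence of a \emph{fixed} counterexample value $f(N_0)\geq e^\gamma$ attained at the finite number $N_0$, this means for all sufficiently large $n$ the excess $f(n)-e^\gamma$ is smaller than $f(N_0)-e^\gamma$ would need to be surpassed. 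I expect the delicate point is to show that the record maximum of $f$ on $[10080,\infty)$ is actually attained (at or below some finite bound) when RH is false, so that only finitely many integers set new records; this is where I would invest the most care, using (\ref{rob2}) to control the tail and the discreteness of record-breaking to conclude $\#XA<\infty$.
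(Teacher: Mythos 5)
Your overall strategy coincides with the paper's. The RH-true direction is exactly the paper's argument (if $XA$ were finite the running maximum of $f$ on $[10080,\infty)$ would stabilize at a value $<e^\gamma$ by Robin's inequality (\ref{Ro}), contradicting Gronwall's theorem (\ref{GR})), and it is complete as you wrote it. The RH-false direction is where you stop short; the step you flag does need one more turn, but it closes quickly and is not delicate. By Robin's criterion and Theorem \ref{ext} there is an $m\geq 10080$ with $f(m)\geq e^\gamma$. The paper then simply observes that $M=\sup_{n\geq 10080}f(n)$ is finite and attained at some $n_0$: if $M>e^\gamma$, then since Gronwall gives $\limsup f=e^\gamma<M$, only finitely many $n$ have $f(n)>\tfrac12(M+e^\gamma)$ and the supremum is attained among them; if $M=e^\gamma$, it is attained at $m$ itself. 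No $n>n_0$ can set a new record, so $\#XA$ is finite. Your proposed route through the explicit bound (\ref{rob2}) also works, but note the wrinkle you only half-identify: when $f(N_0)=e^\gamma$ exactly, (\ref{rob2}) does not by itself show that $f(n)$ eventually drops below $f(N_0)$. You must first suppose, for contradiction, that some $n_1\in XA$ exceeds $N_0$, so that $f(n_1)>f(N_0)\geq e^\gamma$ strictly; then (\ref{rob2}) gives $f(n)\leq f(n_1)$ for all $n$ with $(\log\log n)^2>0.6482/(f(n_1)-e^\gamma)$, which forbids all further records. Either way the conclusion $\#XA<\infty$ follows in two lines once this is set up, so the gap in your write-up is real but entirely fillable by the mechanism you named.
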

\begin{proof}

\emph{Sufficiency.} Assume that RH is not true, then from Theorem \ref{ext}, $f(m)\geq e^\gamma$ for some  $m\geq10080$. From Gronwall's theorem, $M=\sup_{n\geq10080}f(n)$ is finite and there exists $n_0$ such that $f(n_0)=M\geq e^\gamma$ (if $M=e^\gamma$ then set $n_0=m$). An integer $n>n_0$ satisfies $f(n)\leq M=f(n_0)$ and $n$ can not be in XA so that $\#XA\leq n_0$.\\

\emph{Necessity.} \ On the other hand, if RH is true, then Robin's inequality (\ref{Ro}) is true. If $\# XA$ is finite, then there exists an $m$ such that for every $n>m$, $f(n)\leq f(m)$. Then
$$
\limsup_{n\rightarrow\infty} f(n)\leq f(m)<e^\gamma,
$$
which is a contradiction to Gronwall's theorem.
\end{proof}
There are some primes, which cannot be the largest prime factors of any extremely abundant number. For example, referring to Table \ref{tabl}, there is no extremely abundant number with the largest prime factor $p(n)=149$. This can be deduced from Theorem \ref{p<log n prop}.

\section{Auxiliary Lemmas}
Before we state several properties of SA, CA and XA numbers, we give the following lemmas which will be needed in the sequel.
\begin{lem}\label{x(1+E)<y<x(1+E) inverse}
Let $a, b$ be positive constants and $x, y$ positive variables for which
$$
\log x>a,
$$
and
\begin{equation*}
x\left(1-\frac{a}{\log x}\right)<y<x\left(1+\frac{b}{\log x}\right),
\end{equation*}
Then
$$
y\left(1-\frac{c}{\log y}\right)<x<y\left(1+\frac{d}{\log y}\right),
$$
where
$$
c\geq b\left(1-\frac{b-\frac{b}{\log x}}{\log x+b}\right),\qquad d\geq a\left(1+\frac{a+\frac{b}{\log x}}{\log x-a}\right).
$$
\end{lem}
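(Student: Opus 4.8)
The plan is to prove the two one-sided estimates separately, since the desired upper bound for $x$ comes entirely from the left hypothesis $x(1-a/\log x)<y$ and the desired lower bound from the right hypothesis $y<x(1+b/\log x)$. In each case I would first solve the given inequality for $x$ (so a $\log x$ sits in the denominator of the correction term), and then trade that $\log x$ for a $\log y$ using a single elementary bound relating the two logarithms. The identities I will lean on are $\frac{1}{1-a/\log x}=1+\frac{a}{\log x-a}$ and $\frac{1}{1+b/\log x}=1-\frac{b}{\log x+b}$.

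For the upper bound $x<y\left(1+\frac{d}{\log y}\right)$, I would solve the left inequality for $x$. Since $\log x>a$ the factor $1-a/\log x$ is positive, so $x<\frac{y}{1-a/\log x}=y\left(1+\frac{a}{\log x-a}\right)$. It then suffices to have $\frac{a}{\log x-a}\le\frac{d}{\log y}$, i.e. $d\ge\frac{a\log y}{\log x-a}$. Here I bound $\log y$ from above: from $y<x(1+b/\log x)$ together with $\log(1+t)<t$ one gets $\log y<\log x+\frac{b}{\log x}$, and hence $\frac{a\log y}{\log x-a}<a\cdot\frac{\log x+b/\log x}{\log x-a}=a\left(1+\frac{a+b/\log x}{\log x-a}\right)$, which is exactly the stated lower bound for $d$.

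The lower bound $x>y\left(1-\frac{c}{\log y}\right)$ is handled symmetrically. Solving the right inequality for $x$ gives $x>\frac{y}{1+b/\log x}=y\left(1-\frac{b}{\log x+b}\right)$, so it is enough to arrange $\frac{b}{\log x+b}\le\frac{c}{\log y}$, i.e. $c\ge\frac{b\log y}{\log x+b}$. Feeding in the same bound $\log y<\log x+\frac{b}{\log x}$ yields $\frac{b\log y}{\log x+b}<b\cdot\frac{\log x+b/\log x}{\log x+b}=b\left(1-\frac{b-b/\log x}{\log x+b}\right)$, which matches the stated lower bound for $c$.

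I do not expect a deep obstacle here; the work is essentially careful bookkeeping about the direction of each inequality. The point requiring attention is that when I replace $\log y$ by its upper bound $\log x+b/\log x$, I strictly enlarge the fractions $\frac{a\log y}{\log x-a}$ and $\frac{b\log y}{\log x+b}$ (both have positive coefficients in $\log y$), which is precisely why the resulting closed-form thresholds for $c$ and $d$ remain sufficient. I must also confirm $\log y>0$ so that dividing through by $\log y$ preserves the inequalities; this holds once $\log x$ is moderately large, since $\log y\ge\log x+\log\left(1-\frac{a}{\log x}\right)\ge\log x-\frac{a}{\log x-a}$ by the hypothesis $\log x>a$, and this is the regime in which the lemma will be applied.
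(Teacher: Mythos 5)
Your proposal is correct and takes essentially the same route as the paper's own proof: invert the two hypothesis inequalities to isolate $x$, reduce to the sufficient conditions $c\ge b\log y/(\log x+b)$ and $d\ge a\log y/(\log x-a)$, and then replace $\log y$ by the upper bound $\log x+b/\log x$ coming from $\log(1+t)<t$, which yields exactly the stated thresholds.
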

\begin{proof}
Dividing by $x$, inverting both sides and multiplying by $y$, we get
$$
\frac{y}{1+b/\log x}<x<\frac{y}{1-a/\log x}.
$$
We are looking for constants $c$ and $d$ such that
$$
1-\frac{c}{\log y}<\frac{1}{1+b/\log x},
$$
or equivalently
$$
c>(\log y)\frac{b}{\log x+b},
$$
and
$$
\frac{1}{1-a/\log x}<1+\frac{d}{\log y},
$$
or equivalently
$$
d>(\log y) \frac{a}{\log x-a}.
$$
First we determine $c$. Since
$$
y<x\left(1+\frac{b}{\log x}\right),
$$
then
$$
\log y<\log x+\log\left(1+\frac{b}{\log x}\right)<\log x+\frac{b}{\log x}.
$$
So that if
\begin{align*}
c>&\left(\log x+\frac{b}{\log x}\right)\frac{b}{\log x+b}\\
=&b\left(\log x+b-b+\frac{b}{\log x}\right)\frac{1}{\log x+b}\\
=&b\left(1-\frac{b-\frac{b}{\log x}}{\log x+b}\right),
\end{align*}
then
$$
c>\log y\frac{b}{\log x+b},
$$
and hence
$$
x>y\left(1-c\frac{\log\log y}{\log y}\right).
$$
Similarly, if
\begin{align*}
d>&\left(\log x+\frac{b}{\log x}\right)\frac{a}{\log x-a}\\
 =&\left(\log x-a+a+\frac{b}{\log x}\right)\frac{a}{\log x-a}\\
 =&a\left(1+\frac{a+\frac{b}{\log x}}{\log x-a}\right),
\end{align*}
then
$$
d>\log y \frac{a}{\log x-a},
$$
and therefore
$$
x<y\left(1+\frac{d}{\log y}\right).
$$
\end{proof}
By elementary differential calculus one can prove
\begin{lem}\label{x<y<cx hx}
Let $h(x)=\log\log x$. Then
$$
g(y)=\frac{y h(y)-xh(x)}{(y-x)h(x)},\qquad(y>x>e).
$$
is increasing. Especially, if $c>1$ and $e<x<y<c\,x$, we have $g(y)<g(c\,x)$.
\end{lem}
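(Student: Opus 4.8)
The plan is to recognize $g(y)$ as a positive constant multiple of the slope of a secant line of the auxiliary function
$$
F(t):=t\,h(t)=t\log\log t,
$$
and then to reduce the claimed monotonicity to the convexity of $F$. Since $x>e$ is fixed we have $h(x)=\log\log x>0$, so I would first rewrite
$$
g(y)=\frac{1}{h(x)}\cdot\frac{F(y)-F(x)}{y-x}.
$$
Thus $g$ is increasing in $y$ on $(x,\infty)$ if and only if the secant slope $S(y):=\dfrac{F(y)-F(x)}{y-x}$ is. Recognizing this structure is really the whole point of the lemma; everything after it is formal.

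Next I would establish that $F$ is strictly convex on $(e,\infty)$. Differentiating twice gives $F'(t)=\log\log t+\dfrac{1}{\log t}$ and
$$
F''(t)=\frac{1}{t\log t}-\frac{1}{t(\log t)^2}=\frac{\log t-1}{t(\log t)^2},
$$
which is strictly positive for $t>e$ because there $\log t>1$. This is the only computation of substance; since it comes down to the elementary sign of $F''$, there is no serious obstacle, as the authors already signal by calling the proof one of ``elementary differential calculus.''

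I would then deduce monotonicity of the secant slope from convexity. Writing
$$
S'(y)=\frac{F'(y)(y-x)-\bigl(F(y)-F(x)\bigr)}{(y-x)^2},
$$
I would apply the mean value theorem to obtain $F(y)-F(x)=F'(\xi)(y-x)$ for some $\xi\in(x,y)$, so that the numerator equals $(y-x)\bigl(F'(y)-F'(\xi)\bigr)$. Since $F''>0$ on $(e,\infty)$, the derivative $F'$ is strictly increasing there, and $x>e$ guarantees $\xi\in(x,y)\subset(e,\infty)$; hence $\xi<y$ forces $F'(y)>F'(\xi)$ and therefore $S'(y)>0$ for every $y>x$. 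Multiplying by the positive constant $1/h(x)$ shows that $g$ is strictly increasing on $(x,\infty)$.

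Finally, the ``especially'' clause is immediate: if $c>1$ and $e<x<y<cx$, then $cx$ lies in the interval $(x,\infty)$ on which $g$ has just been shown to be increasing, and the strict inequality $y<cx$ yields $g(y)<g(cx)$, as claimed.
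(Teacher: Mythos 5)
Your proof is correct, and it supplies exactly the argument the paper leaves implicit: the authors give no proof at all, saying only that the lemma follows ``by elementary differential calculus,'' and your reduction to the convexity of $F(t)=t\log\log t$ via $F''(t)=(\log t-1)/(t(\log t)^2)>0$ for $t>e$ is the natural way to carry that out. All steps check (including the positivity of $h(x)$ for $x>e$ and the mean-value-theorem argument for the monotonicity of the secant slope), so nothing further is needed.
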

We will need in the sequel the following inequality
\begin{equation}\label{x<y<cx loglog x}
\frac{1}{c-1}\left(c\frac{\log\log cx}{\log\log x}-1\right)<1+\frac{c}{c-1}\frac{\log c}{\log x\log\log x}.
\end{equation}
Indeed, since
\begin{align*}
\frac{1}{c-1}\left(c\frac{\log\log cx}{\log\log x}-1\right)=&\frac{1}{c-1}\left(c\frac{\log\log cx-\log\log x}{\log\log x}+c-1\right)\\
=&\frac{1}{c-1}\left(\frac{c}{\log\log x}\log\left(1+\frac{\log c}{\log x}\right)+c-1\right)\\
<&\frac{1}{c-1}\left(\frac{c}{\log\log x}\left(\frac{\log c}{\log x}\right)+c-1\right)\\
=&1+\frac{c}{c-1}\frac{\log c}{\log x\log\log x}.
\end{align*}

\begin{lem}\label{sqrt>loglog}
Let $x\geq11$. For $y>x$ we have
$$
\frac{\log\log y}{\log\log x}<\frac{\sqrt{y}}{\sqrt{x}}.
$$
\end{lem}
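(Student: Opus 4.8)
The plan is to recast the claim as a monotonicity statement. Since $y>x\geq 11$ forces both $\log\log x$ and $\log\log y$ to be positive, the asserted inequality $\frac{\log\log y}{\log\log x}<\frac{\sqrt{y}}{\sqrt{x}}$ is equivalent, after cross-multiplying, to $\frac{\log\log y}{\sqrt{y}}<\frac{\log\log x}{\sqrt{x}}$. Thus it suffices to prove that the function $\phi(t)=\frac{\log\log t}{\sqrt{t}}$ is strictly decreasing on $[11,\infty)$; for then $y>x$ yields $\phi(y)<\phi(x)$, which is exactly the desired estimate.

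Next I would differentiate. Using $\frac{d}{dt}\log\log t=\frac{1}{t\log t}$, one computes
$$
\phi'(t)=t^{-3/2}\left(\frac{1}{\log t}-\frac{1}{2}\log\log t\right),
$$
so that $\phi'(t)<0$ holds precisely when $\log t\cdot\log\log t>2$. Hence the whole problem reduces to verifying this single inequality for all $t\geq 11$.

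For the final step I would observe that on $(e,\infty)$ both factors $\log t$ and $\log\log t$ are positive and strictly increasing, so their product $g(t)=\log t\cdot\log\log t$ is increasing there; it therefore suffices to check the endpoint $t=11$. A direct computation gives $g(11)=\log 11\cdot\log\log 11\approx 2.398\times 0.875\approx 2.097>2$, whence $g(t)>2$ and $\phi'(t)<0$ throughout $[11,\infty)$, giving the strict monotonicity and the lemma. The only delicate point is that this boundary estimate is genuinely tight — the product at $t=11$ exceeds $2$ by less than $0.1$ — so the hypothesis $x\geq 11$ cannot be relaxed much, and one should retain enough numerical precision to be certain the strict inequality holds; everything else is routine calculus.
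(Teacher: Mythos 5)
Your proof is correct. Note that the paper states this lemma without any proof at all, so there is no argument of the authors' to compare yours against; your calculus argument is a complete and natural way to fill that omission. The reduction by cross-multiplication is legitimate because $\log\log x>0$ for $x\geq 11$, the derivative computation $\phi'(t)=t^{-3/2}\bigl(\tfrac{1}{\log t}-\tfrac12\log\log t\bigr)$ is right, and the endpoint check $\log 11\cdot\log\log 11\approx 2.097>2$ together with the monotonicity of $\log t\cdot\log\log t$ on $(e,\infty)$ does establish that $\phi$ is strictly decreasing on $[11,\infty)$. Your closing observation is also apt: since $\log 10\cdot\log\log 10\approx 1.92<2$, the threshold $x\geq 11$ is essentially the smallest integer for which this monotonicity argument works, so the numerical check genuinely matters.
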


Chebyshev's functions $\vartheta(x)$ and $\psi(x)$, which are respectively, the logarithm of the product of all primes $\leq x$, and the logarithm of the least common multiple of all integers $\leq x$; namely
$$
\vartheta(x)=\sum_{p\leq x}\log p,\qquad\psi(x)=\sum_{p^m\leq x}\log p=\sum_{p\leq x}\left\lfloor\frac{\log x}{\log p}\right\rfloor\log p,
$$
where $\lfloor x\rfloor$ is the largest integer not greater than $x$.
The prime number theorem is equivalent to (\cite{HW}, Th. 434; \cite{Ingham}, Th. 3, 12)
\begin{equation}\label{psi x-x}
    \psi(x)\sim x.
\end{equation}

\begin{lem}\label{ros-sch}
For $x>1$, we have
\begin{enumerate}[\upshape (i)]
  \item\hspace*{60pt} $\displaystyle{x\left(1-\frac{1.25}{\log x}\right)<\vartheta(x)<x\left(1+\frac{0.021}{\log x}\right)}$,
  \item\hspace*{60pt} $\displaystyle{x\left(1-\frac{0.9}{\log x}\right)<\psi(x)<x\left(1+\frac{0.5}{\log x}\right)}$.
\end{enumerate}
\end{lem}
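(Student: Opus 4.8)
The plan is to recognize this statement as the explicit Chebyshev-function bounds in the style of Rosser and Schoenfeld, whose natural route passes through the Riemann--von Mangoldt explicit formula for $\psi$ together with quantitative information about the nontrivial zeros of $\zeta$. I would first dispose of the passage between $\vartheta$ and $\psi$, since the two parts of the lemma are not independent: from the elementary identity
\[
\psi(x)-\vartheta(x)=\sum_{m\geq2}\vartheta\!\left(x^{1/m}\right)=\vartheta(\sqrt{x})+\vartheta(x^{1/3})+\cdots,
\]
in which the sum is finite (it terminates once $x^{1/m}<2$), a crude bound on each term gives $0\leq\psi(x)-\vartheta(x)\ll\sqrt{x}\,\log x$. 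Hence the constants in (i) and (ii) differ only by a lower-order quantity that can be controlled explicitly, and it suffices to establish a two-sided estimate of the shape $|\psi(x)-x|<\varepsilon(x)\,x$ with $\varepsilon(x)$ of order $1/\log x$ and the correct numerical coefficient, and then transfer it to $\vartheta$.

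The core step is the explicit formula
\[
\psi(x)=x-\sum_{\rho}\frac{x^{\rho}}{\rho}-\log(2\pi)-\tfrac12\log\!\left(1-x^{-2}\right),
\]
the sum running over the nontrivial zeros $\rho$ of $\zeta$, so that the error $\psi(x)-x$ is governed by $\sum_{\rho}x^{\rho}/\rho$. I would control this sum by splitting the zeros at a height $T$: for $|\operatorname{Im}\rho|\leq T$ I would use the numerical verification that every such zero satisfies $\operatorname{Re}\rho=1/2$, so that $|x^{\rho}|=\sqrt{x}$; for $|\operatorname{Im}\rho|>T$ I would invoke a quantitative zero-free region of the form $\operatorname{Re}\rho<1-1/(R\log|\operatorname{Im}\rho|)$ together with a density estimate $N(T+1)-N(T)\ll\log T$. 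Summation by parts over the zeros then produces an explicit bound $|\psi(x)-x|<\varepsilon(x)\,x$ valid for all $x$ beyond an effective threshold $x_0$.

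For $x$ below $x_0$ the analytic estimate is too weak, so there I would verify the four inequalities by direct computation: both $\vartheta$ and $\psi$ are step functions, the ratios $\vartheta(x)/x$ and $\psi(x)/x$ are monotone on each interval between consecutive prime powers, and so it suffices to test the inequalities at the finitely many jump points up to $x_0$ and at the interval endpoints. A final optimization of $T$, $R$ and $x_0$ against the target coefficients yields the stated constants $1.25,\,0.021$ and $0.9,\,0.5$. For the purposes of this paper these bounds are classical and may simply be quoted from Rosser and Schoenfeld; the outline above is the route one would follow to reprove them from first principles.

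The main obstacle is the sum over zeros: obtaining it with explicit and sharp constants requires both a good explicit value of $R$ in the zero-free region and a genuine numerical input, namely the verification of the Riemann hypothesis up to the chosen height $T$, and these two ingredients must be balanced so that neither the low-lying zeros nor the high tail dominates. The delicate point is the transition near $x_0$, where the finite computation and the analytic tail must be made to overlap; in particular, forcing the upper coefficient down to the small value $0.021$ compels both $T$ and $x_0$ to be taken rather large, and it is this constant-chasing, rather than any single conceptual difficulty, that constitutes the real work.
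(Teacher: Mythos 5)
Your proposal is correct in substance, but it reconstructs the Rosser--Schoenfeld machinery that the paper never actually deploys: the paper's proof of (i) is a pure citation (it combines Corollary $2^\ast$ and Theorem $7^\ast$ ($5.6a^\ast$) of Schoenfeld's 1976 paper), and the only genuine work occurs in (ii), where the bridging step between $\vartheta$ and $\psi$ runs in the opposite direction to yours. You propose to establish the $\psi$-bounds first (explicit formula, verified zeros up to height $T$, a zero-free region for the tail, finite computation below a threshold $x_0$) and then descend to $\vartheta$ via $\psi(x)-\vartheta(x)=\sum_{m\ge2}\vartheta(x^{1/m})\ll\sqrt{x}\log x$. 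The paper instead takes the $\vartheta$-bounds as the cited input and climbs up to $\psi$: for the upper bound in (ii) it writes $\psi(x)-x=(\psi(x)-\vartheta(x))+(\vartheta(x)-x)$, inserts the Rosser--Schoenfeld estimate $\psi(x)-\vartheta(x)<1.02\sqrt{x}+3\sqrt[3]{x}$ together with the upper bound of (i), checks that the total is below $\tfrac12\,x/\log x$ for $x\ge780$, and finishes with a direct computation for $x<780$; the lower bound in (ii) follows from $\psi(x)\ge\vartheta(x)$ and the cited lower bound for $\vartheta$. Both routes are sound: yours buys self-containedness at the price of the heavy constant-chasing (optimizing $T$, the zero-free-region constant and $x_0$ against the targets $1.25$, $0.021$, $0.9$, $0.5$), which is precisely the content of Schoenfeld's paper, while the paper's buys a half-page proof at the price of leaning entirely on the literature for part (i). Your closing remark that the bounds ``may simply be quoted from Rosser and Schoenfeld'' is in fact what the paper does.
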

\begin{proof}
\begin{enumerate}[\upshape (i)]
  \item We combined Corollary $2^\ast$ and Theorem $7^\ast$ ($5.6a^\ast$) of \cite{Sch-1976}.
  \item We appeal Corollary $2^\ast$ of \cite{Sch-1976} for $x\geq41$ and the fact $\psi(x)\geq\vartheta(x)$ and Computation for $x<41$. Using the inequality in Corollary of Theorem 14 \cite{Rosser-1962}; i.e.
      $$
      \psi(x)-\vartheta(x)<1.02\sqrt{x}+3\sqrt[3]{x}
      $$
      and (i)  we have
  \begin{align*}
   \psi(x)-x=&\psi(x)-\vartheta(x)+\vartheta(x)-x\\
            <&1.02\sqrt{x}+3\sqrt[3]{x}+(0.021)\frac{x}{\log x}\\
            =&\left(1.02\frac{\log x}{\sqrt{x}}+3\frac{\log x}{\sqrt[3]{x^2}}+0.021\right)\frac{x}{\log x}\\
            <&\frac12\frac{x}{\log x},\qquad(x\geq780).
  \end{align*}
          For $x<780$ we use computation.
\end{enumerate}
\end{proof}

\begin{lem}\label{ros-sch}

\begin{enumerate}[\upshape (i)]
   \item $$
\vartheta(x)>x\left(1-\frac{0.068}{\log x}\right),\qquad(x\geq89,909),
$$
  \item $$
\psi(x)<x\left(1+\frac{0.045}{\log x}\right),\qquad(x\geq43,730),
$$
  \item $$
\psi(x)>x\left(1-\frac{0.0221}{\log x}\right),\qquad(x\geq89,909).
$$
\end{enumerate}
\end{lem}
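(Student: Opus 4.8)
The plan is to read off all three inequalities from the sharpest explicit unconditional estimates for the Chebyshev functions available from Rosser and Schoenfeld (\cite{Rosser-1962}, \cite{Sch-1976}), rather than from the difference identity used for the weaker, all-$x$ bounds of the preceding lemma. The mechanism I would use is a simple change of shape: an estimate of the form $|\psi(x)-x|<\varepsilon x$ (and likewise for $\vartheta$) immediately yields $|\psi(x)-x|<\delta x/\log x$ on the range where $\varepsilon\leq\delta/\log x$, i.e.\ for $x\leq e^{\delta/\varepsilon}$; feeding in a finer table entry with smaller $\varepsilon$ for larger $x$ then extends the inequality to the whole half-line. Concretely, parts (ii) and (iii) come from a two-sided explicit estimate for $\psi$, and part (i) from a one-sided explicit estimate for $\vartheta$; the constants $0.045$ and $0.0221$ together with the thresholds $43730$ and $89909$ are exactly the crossover points where the constant-$\varepsilon$ Schoenfeld bounds, recast in the $\delta/\log x$ form, meet the target $\delta$.

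A point worth flagging is that, in contrast with the preceding lemma, one should \emph{not} try to deduce (i) from (iii) through the identity $\psi(x)-\vartheta(x)=\sum_{m\geq2}\vartheta(x^{1/m})$. Writing $\vartheta(x)=\psi(x)-(\psi(x)-\vartheta(x))$ and inserting (iii) gives
$$
\vartheta(x)>x\left(1-\frac{1}{\log x}\left(0.0221+\frac{\bigl(\psi(x)-\vartheta(x)\bigr)\log x}{x}\right)\right),
$$
but since $\psi(x)-\vartheta(x)\geq\vartheta(\sqrt{x})\sim\sqrt{x}$, and any upper bound one can actually certify (be it $1.02\sqrt{x}+3\sqrt[3]{x}$ or the tighter termwise sum of $\vartheta(x^{1/m})$) is of that same order, the correction term contributes a relative error of about $(\log x)/\sqrt{x}\approx 0.05/\log x$ at $x=89909$. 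That pushes the bracketed constant to roughly $0.07$, just \emph{past} the claimed $0.068$. Hence the transfer route falls short at the stated threshold, and (i) genuinely requires a direct explicit lower bound for $\vartheta$; the difference identity survives only as a consistency check.

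For the finitely many $x$ lying between each stated threshold and the (possibly larger) starting point at which the invoked Schoenfeld estimate is formulated, I would close the gap by direct computation, exactly as in the proof of the preceding lemma, using that $\vartheta$ and $\psi$ are step functions so it suffices to verify the inequalities at prime and prime-power arguments. I expect the main obstacle to be purely one of bookkeeping: the thresholds are chosen with essentially no numerical slack, so obtaining the precise constants $0.068$, $0.045$, $0.0221$ — rather than something weaker — forces one to use the strongest explicit estimates then available and to track the crossover computation carefully, with the delicacy of part (i) (where the naive difference bound overshoots) being the sharpest such constraint.
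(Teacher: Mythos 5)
Your high-level plan (read the bounds off the sharpest explicit Rosser--Schoenfeld/Schoenfeld estimates, convert to the $\delta x/\log x$ shape at the stated thresholds, and check finitely many small $x$ by computation) is in the right spirit, but your treatment of part (i) contains a genuine error that undermines the proof. You assert that (i) \emph{cannot} be obtained from (iii) via the identity $\vartheta(x)=\psi(x)-(\psi(x)-\vartheta(x))$, because the correction term $\psi(x)-\vartheta(x)$ would contribute about $0.05/\log x$ at $x=89909$ and push the constant past $0.068$. That numerical estimate is based on the crude bound $1.02\sqrt{x}+3\sqrt[3]{x}$; the paper instead uses Costa Pereira's sharper estimates $\psi(x)-\vartheta(x)<\sqrt{x}+\tfrac43\sqrt[3]{x}$ for $x<10^{16}$ (and $1.001\sqrt{x}+1.1\sqrt[3]{x}$ beyond), for which at $x_0=89909$ one gets
\begin{equation*}
\frac{\log x_0}{\sqrt{x_0}}+\frac43\frac{\log x_0}{\sqrt[3]{x_0^2}}+0.0220646\ \approx\ 0.0380+0.0076+0.0221\ \approx\ 0.0677\ <\ 0.068,
\end{equation*}
so the transfer route you reject is exactly the one the paper takes, and it succeeds with no slack to spare. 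Having (wrongly) discarded that route, you fall back on ``a direct explicit lower bound for $\vartheta$'' with constant $0.068$ and threshold $89909$, but you never name such an estimate, so part (i) is not actually established in your write-up.

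A secondary, smaller discrepancy: your conversion mechanism ($|\psi(x)-x|<\varepsilon x$ recast as $\delta x/\log x$ on $x\leq e^{\delta/\varepsilon}$, with finer table entries for larger $x$) is not what drives parts (ii) and (iii) in the paper either. Part (iii) is a direct citation of Schoenfeld's Theorem $7^\ast$ ($5.5b^\ast$), and part (ii) is assembled on three ranges ($43730\leq x\leq10^8$, $10^8\leq x\leq10^{16}$, $x\geq10^{16}$) from Costa Pereira's $O(\sqrt{x})$-type bounds on $\psi(x)-x$ and on $\psi(x)-\vartheta(x)$, each converted to the $x/\log x$ shape by evaluating $(\log x)/\sqrt{x}$ at the left endpoint (a decreasing quantity), plus Schoenfeld's $0.0201384\,x/\log x$ for $\vartheta$. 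Your scheme could in principle be made to work for (ii) and (iii) with the right table entries, but as written it is not pinned to any specific estimates, and the one place where you commit to concrete numbers --- the claimed failure of the difference identity in (i) --- is where the argument goes wrong.
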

\begin{proof}
\begin{enumerate}[\upshape (i)]
  \item By Theorem $7^\ast$ ($5.5b^\ast$) of \cite{Sch-1976}, Theorems 2, 4 of \cite{Pereira}, for $89,909=x_0<x<10^{16}$ we have
  \begin{align*}
  \vartheta(x)-x=&\vartheta(x)-\psi(x)+\psi(x)-x\\
                >&-\sqrt{x}-\frac43\sqrt[3]{x}-0.0220646\frac{x}{\log x}\\
                =&-\left(\frac{\log x}{\sqrt{x}}+\frac43\frac{\log x}{\sqrt[3]{x^2}}+0.0220646\right)\frac{x}{\log x}\\
                \geq&-\left(\frac{\log x_0}{\sqrt{x_0}}+\frac43\frac{\log x_0}{\sqrt[3]{x_0^2}}+0.0220646\right)\frac{x}{\log x}\\
                >&-0.068\frac{x}{\log x},
  \end{align*}
  and by Theorem $7^\ast$ ($5.5b^\ast$) of \cite{Sch-1976}, Theorem 5 of \cite{Pereira} for $x\geq x_0=10^{16}$
\begin{align*}
  \vartheta(x)-x=&\vartheta(x)-\psi(x)+\psi(x)-x\\
                >&-1.001\sqrt{x}-1.1\sqrt[3]{x}-0.0220646\frac{x}{\log x}\\
                =&-\left(1.001\frac{\log x}{\sqrt{x}}+1.1\frac{\log x}{\sqrt[3]{x^2}}+0.0220646\right)\frac{x}{\log x}\\
                \geq&-\left(1.001\frac{\log x_0}{\sqrt{x_0}}+1.1\frac{\log x_0}{\sqrt[3]{x_0^2}}+0.0220646\right)\frac{x}{\log x}\\
                >&-0.068\frac{x}{\log x}.
\end{align*}
  \item By Theorem 3 of \cite{Pereira}, for $43,730=x_0\leq x\leq 10^8$
  \begin{align*}
    \psi(x)-x<&0.656\sqrt{x}+\frac43\sqrt[3]{x}\\
             =&\left(0.656\frac{\log x}{\sqrt{x}}+\frac43\frac{\log x}{\sqrt[3]{x^2}}\right)\frac{x}{\log x}\\
             \leq &\left(0.656\frac{\log x_0}{\sqrt{x_0}}+\frac43\frac{\log x_0}{\sqrt[3]{x_0^2}}\right)\frac{x}{\log x}\\
             <&0.045\frac{x}{\log x}
  \end{align*}
  By Theorem $7^\ast$ ($5.6a^\ast$) of \cite{Sch-1976} and Theorem 4 of \cite{Pereira}, for $10^8=x_0\leq x\leq 10^{16}$
  \begin{align*}
    \psi(x)-x=&\psi(x)-\vartheta(x)+\vartheta(x)-x\\
             <&\sqrt{x}+\frac65\sqrt[3]{x}+0.0201384\frac{x}{\log x}\\
             =&\left(\frac{\log x}{\sqrt{x}}+\frac65\frac{\log x}{\sqrt[3]{x^2}}+0.0201384\right)\frac{x}{\log x}\\
             \leq &\left(\frac{\log x_0}{\sqrt{x_0}}+\frac65\frac{\log x_0}{\sqrt[3]{x_0^2}}+0.0201384\right)\frac{x}{\log x}\\
             <&0.0229\frac{x}{\log x}
  \end{align*}
  Again by Theorem $7^\ast$ ($5.6a^\ast$) of \cite{Sch-1976} and Theorem 5 of \cite{Pereira}, for $x\geq x_0=10^{16}$
  \begin{align*}
    \psi(x)-x=&\psi(x)-\vartheta(x)+\vartheta(x)-x\\
             <&1.001\sqrt{x}+1.1\sqrt[3]{x}+0.0201384\frac{x}{\log x}\\
             =&\left(1.001\frac{\log x}{\sqrt{x}}+1.1\frac{\log x}{\sqrt[3]{x^2}}+0.0201384\right)\frac{x}{\log x}\\
             \leq &\left(1.001\frac{\log x_0}{\sqrt{x_0}}+1.1\frac{\log x_0}{\sqrt[3]{x_0^2}}+0.0201384\right)\frac{x}{\log x}\\
             <&0.020139\frac{x}{\log x}
  \end{align*}
  \item By Theorem $7^\ast$ ($5.5b^\ast$) of \cite{Sch-1976}.
\end{enumerate}
\end{proof}
\begin{lem}[\cite{Dusart-1}, \cite{Dusart-2}]\label{Dusart1,2}
We have
$$
\pi(x)>\frac{x}{\log x}\left(1+\frac{1}{\log x}\right),\qquad(x\geq599),
$$
$$
\pi(x)<\frac{x}{\log x}\left(1+\frac{1.2762}{\log x}\right),\qquad(x>1).
$$
where $\pi(x)$ is the number of primes $\leq x$.
\end{lem}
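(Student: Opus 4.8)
The natural route is to transfer the explicit bounds on $\vartheta(x)$ furnished by the preceding lemmas to $\pi(x)$ by partial (Abel) summation. Writing $\vartheta(t)=\sum_{p\le t}\log p$ and summing $1=\log p\cdot(\log p)^{-1}$ over the primes $p\le x$, Riemann--Stieltjes integration yields the exact identity
$$
\pi(x)=\frac{\vartheta(x)}{\log x}+\int_{2}^{x}\frac{\vartheta(t)}{t\log^{2}t}\,dt .
$$
The plan is to insert the two-sided estimates of the shape $t\bigl(1-a/\log t\bigr)<\vartheta(t)<t\bigl(1+b/\log t\bigr)$, with the small explicit constants recorded above, into both terms on the right, and then to compare the resulting integrals with the logarithmic integral.

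First, using $\frac{d}{dt}\frac{t}{\log t}=\frac{1}{\log t}-\frac{1}{\log^{2}t}$, one has $\int_{2}^{x}\frac{dt}{\log^{2}t}=\li(x)-\frac{x}{\log x}+C$, so that replacing $\vartheta(t)$ by $t$ in the identity recovers $\pi(x)\approx\li(x)$; the known expansion
$$
\li(x)=\frac{x}{\log x}\left(1+\frac{1}{\log x}+\frac{2}{\log^{2}x}+\frac{6}{\log^{3}x}+\cdots\right)
$$
then exhibits the mechanism. The desired lower bound $\frac{x}{\log x}(1+1/\log x)$ retains only the first two terms, and the first discarded term $2/\log^{2}x$ is positive, leaving a genuine surplus; the larger constant $1.2762$ in the upper bound is chosen precisely so that $\frac{x}{\log x}(1+1.2762/\log x)$ overshoots this surplus. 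The work is to make all of this effective: bound the integral $\int_{2}^{x}\vartheta(t)/(t\log^{2}t)\,dt$ between the corresponding integrals with $\vartheta(t)$ replaced by $t(1\mp a/\log t)$ and $t(1\pm b/\log t)$, and control the tail of the $\li$-expansion by an explicit remainder, so that each inequality holds for all $x$ beyond some effective threshold $x_{1}$.

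The hard part will be calibrating the constants at the very thresholds claimed, $x\ge599$ for the lower bound and $x>1$ for the upper. The asymptotic surplus is only of order $1/\log^{2}x$, so the explicit $\vartheta$-error must be squeezed tightly; reaching $x=599$ (rather than some much larger $x_{1}$) forces one to use the sharp Rosser--Schoenfeld--Dusart estimates together with a verified zero-free region and a numerical check of the low-lying zeros of $\zeta$, and then to dispatch the finite remaining range $2\le x\le x_{1}$ by direct computation over successive primes. This calibration, not the summation identity, is the crux; it is exactly the content of Dusart's cited papers, which is why we import the statement rather than reprove it here.
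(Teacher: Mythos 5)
The paper offers no proof of this lemma at all: it is imported verbatim from Dusart's papers, exactly as you conclude at the end of your proposal, so your treatment agrees with the paper's. Your preliminary sketch (Abel summation from $\vartheta$ to $\pi$, comparison with the asymptotic expansion of $\textup{li}(x)$, and explicit calibration of constants down to the stated thresholds) is a correct description of the standard route Dusart actually follows, but it is heuristic rather than a proof, and the paper neither requires nor supplies one.
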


Littlewood oscillation for Chebyshev $\psi$ function is given by
\begin{lem}[\cite{Ingham}, Th. 34]\label{omega ingham}
We have
$$
\psi(x)-x=\Omega_\pm(x^{1/2}\log\log\log x),\qquad{\text{\upshape as}}\quad x\rightarrow\infty.
$$
More precisely,
$$
\varlimsup_{x\rightarrow\infty}\frac{\psi(x)-x}{x^{1/2}\log\log\log x}\geq\frac12,\qquad\varliminf_{x\rightarrow\infty}\frac{\psi(x)-x}{x^{1/2}\log\log\log x}\leq-\frac12.
$$
\end{lem}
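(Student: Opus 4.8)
The plan is to treat this as the classical Littlewood oscillation theorem, whose proof rests on the explicit formula for $\psi$ together with a resonance argument over the zeros of $\zeta$; for the present paper I would simply invoke Ingham's Theorem~34 \cite{Ingham}, but the underlying argument proceeds as follows. First I would start from the explicit formula
\[
\psi(x)-x=-\sum_{\rho}\frac{x^{\rho}}{\rho}-\log(2\pi)-\tfrac12\log\!\left(1-x^{-2}\right),
\]
where $\rho=\beta+i\gamma$ runs over the nontrivial zeros of $\zeta$, together with its truncated form $\psi(x)-x=-\sum_{|\gamma|\le T}x^{\rho}/\rho+O(xT^{-1}\log^{2}x)$. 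Grouping conjugate zeros, a zero with $\beta=\tfrac12$ contributes a term of modulus $\asymp\sqrt{x}/|\gamma|$ oscillating like $\cos(\gamma\log x+\theta_\gamma)$, so the whole problem reduces to showing that the sum over zeros can be made $\gg\sqrt{x}\log\log\log x$ with either sign.

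Next I would record the analytic input that makes oscillation unavoidable. The softer half is Landau's oscillation theorem applied to $G(s)=-\zeta'(s)/\zeta(s)-s/(s-1)=s\int_1^\infty(\psi(x)-x)x^{-s-1}\,dx$: since $-\zeta'/\zeta$ has genuine poles at zeros $\rho=\tfrac12+i\gamma$ on the critical line (which exist, indeed in infinite number by Hardy's theorem), a one-sided bound $\psi(x)-x\le C\sqrt{x}$ or $\psi(x)-x\ge -C\sqrt{x}$ would force $G$ to be analytic past the abscissa $\tfrac12$, contradicting those complex poles. This already yields $\psi(x)-x=\Omega_\pm(\sqrt{x})$, but it does not produce the triple logarithm, which is the hard part.

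To gain the factor $\log\log\log x$ I would run Littlewood's resonance construction. Fixing a height $Y$ and the zeros with $0<\gamma\le Y$, one uses a Kronecker/Dirichlet simultaneous-approximation argument to select values $t=\log x$ for which the phases $\gamma t$ are nearly aligned, so that the corresponding terms reinforce with a prescribed sign; the coherent partial sum is governed by $\sum_{0<\gamma\le Y}1/\gamma\sim(\log Y)^2/(4\pi)$. The delicate point is the quantitative balance: the Diophantine cost of aligning the chosen zeros bounds $\log x$ from below (hence controls $\log\log x$ in terms of $Y$), while the truncation error $O(xT^{-1}\log^2 x)$ forces the cut-off $T\ge Y$ to be large compared with $\sqrt{x}$. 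Optimizing these competing constraints is exactly what collapses the naive gain down to a single factor $\log\log\log x$ and pins the constant at $\tfrac12$.

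I expect this optimization to be the main obstacle: a naive phase-alignment heuristic suggests a larger power of the iterated logarithm, and it is precisely the interaction between the explicit-formula truncation and the Diophantine alignment that produces the sharp order $\sqrt{x}\log\log\log x$ with constant $\tfrac12$. Since this is standard and due to Littlewood, I would not reproduce it here but cite Ingham's Theorem~34 as stated.
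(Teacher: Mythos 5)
Your proposal is correct and ends up doing exactly what the paper does: the paper offers no proof of this lemma, simply citing it as Theorem~34 of Ingham, and your accompanying sketch of Littlewood's explicit-formula-plus-Diophantine-resonance argument is an accurate account of the standard proof behind that citation. No discrepancy to report.
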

\begin{lem}[\cite{Rosser-1962}, Th. 13]\label{psi-theta}
$$
\psi(x)-\vartheta(x)<1.42620\sqrt{x},\qquad(x>0)
$$
\end{lem}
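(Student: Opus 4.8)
The plan is to reduce the inequality to a statement about prime powers via the standard decomposition of $\psi-\vartheta$. First I would write
\[
\psi(x)-\vartheta(x)=\sum_{m\ge2}\vartheta\!\left(x^{1/m}\right),
\]
a finite sum since $\vartheta(x^{1/m})=0$ once $x^{1/m}<2$; this follows by grouping, for each prime $p$, the contributions of $p^2,p^3,\dots$ to $\psi(x)=\sum_{p^k\le x}\log p$ and subtracting $\vartheta(x)$. Setting $R(x)=(\psi(x)-\vartheta(x))/\sqrt x$, the numerator is a step function that is constant between consecutive prime powers while $\sqrt x$ strictly increases; hence $R$ strictly decreases on each such interval and jumps upward exactly at the prime powers $p^k$ with $k\ge2$. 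Consequently $\sup_{x>0}R(x)$ is attained at the jump of some prime power, and it suffices to bound $R$ there.

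For the large-$x$ range I would isolate the dominant term, $\psi(x)-\vartheta(x)=\vartheta(\sqrt x)+\sum_{m\ge3}\vartheta(x^{1/m})$. The leading tail term is $\vartheta(x^{1/3})\sim x^{1/3}$, and the remaining terms satisfy $\sum_{m\ge4}\vartheta(x^{1/m})\le(\log_2 x)\,\vartheta(x^{1/4})=O(x^{1/4}\log x)$, so the whole tail is $O(x^{1/3})$. Combined with an explicit Chebyshev bound of the shape $\vartheta(y)<1.01624\,y$ for the main term, this yields $R(x)\le 1.01624+c\,x^{-1/6}+(\text{lower order})$ for an explicit constant $c$, and the right-hand side drops below $1.42620$ for all $x\ge x_0$ with an explicitly computable, moderate $x_0$.

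It then remains to verify $R(x)<1.42620$ for $x<x_0$, which is a finite computation over the prime powers below $x_0$. Carrying it out, the maximum of $R$ is attained at $x=361=19^2$, where
\[
R(361)=\frac{7\log2+4\log3+2\log5+2\log7+\log11+\log13+\log17+\log19}{19}=1.42619\ldots,
\]
strictly below the asserted constant, while every other prime power gives a smaller value. This pinpoints $361$ as the extremal point and explains the precise constant $1.42620$ as $R(361)$ rounded up, guaranteeing strictness of the inequality for all $x>0$.

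The hard part will be the effective matching of the two ranges: one must keep the tail estimate sharp enough (comparing its leading term $\vartheta(x^{1/3})\sim x^{1/3}$ against $\sqrt x$, rather than using a lossy $x^{1/3}\log x$ bound) so that $x_0$ is small enough for the finite verification to be feasible, and one must ensure that no prime power between the peak at $361$ and $x_0$ exceeds the bound. This is exactly where the sharp explicit forms of the prime estimates, rather than the bare asymptotic $\psi(x)\sim x$, are indispensable; with only $\vartheta(y)\sim y$ the threshold $x_0$ would be ineffective and the finite check impossible to delimit.
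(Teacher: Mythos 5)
The paper does not actually prove this lemma: it is imported verbatim as Theorem 13 of Rosser and Schoenfeld \cite{Rosser-1962}, so there is no internal argument to compare yours against. Your reconstruction is correct and is in substance the argument of the cited source itself: the identity $\psi(x)-\vartheta(x)=\sum_{m\ge2}\vartheta(x^{1/m})$, the observation that $(\psi(x)-\vartheta(x))/\sqrt{x}$ can only attain its supremum at a prime power $p^k$ with $k\ge2$ (at a prime itself both $\psi$ and $\vartheta$ jump by the same amount), an explicit Chebyshev bound of the form $\vartheta(y)<1.01624\,y$ to dispose of the range $x\ge x_0$, and a finite verification below $x_0$. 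Your identification of the extremal point checks out numerically: the sum $7\log2+4\log3+2\log5+2\log7+\log11+\log13+\log17+\log19$ equals $27.09767\ldots$, and division by $\sqrt{361}=19$ gives $1.426193\ldots$, which is precisely the origin of the constant $1.42620$. The one place requiring care is the quantitative matching you flag yourself: with the crude tail bound $(\log_2 x)\,\vartheta(x^{1/4})$ the threshold $x_0$ lands around $10^7$ to $10^8$, which still leaves a feasible finite check over the prime powers $p^k\le x_0$ with $k\ge 2$ (on the order of a thousand points), and a slightly sharper handling of the $m=3$ term shrinks $x_0$ further. In short, your proposal is a sound proof outline that is strictly more informative than the paper's bare citation, though it is not a different method from the one in the reference it replaces.
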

Combining Lemmas \ref{omega ingham} and \ref{psi-theta}
\begin{cor}\label{thetax-x omega}
We have
$$
\vartheta(x)-x=\Omega_\pm(x^{1/2}\log\log\log x),\qquad{\text{\upshape as}}\quad x\rightarrow\infty.
$$
\end{cor}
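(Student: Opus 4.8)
The plan is to write $\vartheta(x)-x$ as the difference $(\psi(x)-x)-(\psi(x)-\vartheta(x))$ and to argue that the second piece is negligible on the scale $x^{1/2}\log\log\log x$, so that the two-sided oscillation of $\psi(x)-x$ supplied by Lemma~\ref{omega ingham} is inherited by $\vartheta(x)-x$.

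First I would record the decomposition
$$
\frac{\vartheta(x)-x}{x^{1/2}\log\log\log x}=\frac{\psi(x)-x}{x^{1/2}\log\log\log x}-\frac{\psi(x)-\vartheta(x)}{x^{1/2}\log\log\log x}.
$$
By Lemma~\ref{psi-theta} the last term obeys $0<\dfrac{\psi(x)-\vartheta(x)}{x^{1/2}\log\log\log x}<\dfrac{1.42620}{\log\log\log x}$, which tends to $0$ as $x\rightarrow\infty$ since $\log\log\log x\rightarrow\infty$.

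Next I would invoke the elementary fact that subtracting from a sequence another sequence tending to $0$ alters neither its $\varlimsup$ nor its $\varliminf$. Applying this with the vanishing correction term above yields
$$
\varlimsup_{x\rightarrow\infty}\frac{\vartheta(x)-x}{x^{1/2}\log\log\log x}=\varlimsup_{x\rightarrow\infty}\frac{\psi(x)-x}{x^{1/2}\log\log\log x}\geq\frac12,
$$
and likewise
$$
\varliminf_{x\rightarrow\infty}\frac{\vartheta(x)-x}{x^{1/2}\log\log\log x}=\varliminf_{x\rightarrow\infty}\frac{\psi(x)-x}{x^{1/2}\log\log\log x}\leq-\frac12,
$$
the two inequalities on the right being exactly the content of Lemma~\ref{omega ingham}. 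Together these say precisely that $\vartheta(x)-x=\Omega_\pm(x^{1/2}\log\log\log x)$.

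There is no genuine obstacle here; the single point deserving care is that the correction $\psi(x)-\vartheta(x)$, although of exact order $\sqrt{x}$, is of \emph{smaller} order than the oscillation scale $x^{1/2}\log\log\log x$, and this is so precisely because $\log\log\log x\rightarrow\infty$. Were the discrepancy between $\psi$ and $\vartheta$ comparable to $\sqrt{x}\,\log\log\log x$, the cancellation argument would break down; thus the decisive inputs are the sharp $O(\sqrt{x})$ bound of Lemma~\ref{psi-theta} coupled with the triple-logarithmic growth appearing in Littlewood's oscillation estimate.
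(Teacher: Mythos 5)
Your proof is correct and is exactly the argument the paper intends: the corollary is stated as "Combining Lemmas \ref{omega ingham} and \ref{psi-theta}", and your write-up simply makes explicit that the $O(\sqrt{x})$ discrepancy $\psi(x)-\vartheta(x)$ is $o(x^{1/2}\log\log\log x)$ and hence does not affect the $\varlimsup$ and $\varliminf$ from Littlewood's oscillation result. No gaps.
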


\begin{lem}[\cite{Ingham-2}]\label{q, q+cqt}
The number of primes in the interval $(q, q+c\,q^\theta)$ is asymptotic to $c\,q^\theta/\log q$, where $\theta\geq5/8$.
\end{lem}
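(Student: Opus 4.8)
The plan is to pass from counting primes to the Chebyshev function $\psi$ in short intervals, estimate the latter by the explicit formula, and then translate back to $\pi$. Writing $y=c\,q^{\theta}$, the assertion is equivalent to $\psi(q+y)-\psi(q)\sim y$, since the contribution of prime powers and the passage from $\psi$ to $\pi$ are handled by partial summation together with Lemma \ref{psi-theta} (the bound $\psi-\vartheta\ll\sqrt{x}$) and the elementary estimates of Lemma \ref{Dusart1,2}; these lower-order terms are negligible against $y=c\,q^{\theta}$ as soon as $\theta>1/2$. Note that the bare prime number theorem $\psi(x)\sim x$ from \eqref{psi x-x} is useless here, as it only yields $\psi(q+y)-\psi(q)=y+o(q)$, and $o(q)$ swamps $y=c\,q^{\theta}$ for $\theta<1$. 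Thus the whole difficulty is concentrated in the short-interval asymptotic for $\psi$.

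First I would invoke the explicit (Riemann--von Mangoldt) formula, which expresses $\psi(x+y)-\psi(x)$ as the main term $y$ minus a sum $\sum_{\rho}\frac{(x+y)^{\rho}-x^{\rho}}{\rho}$ over the nontrivial zeros $\rho=\beta+i\gamma$ of $\zeta$, plus an admissible error from truncating the sum at a height $T$. Using $(x+y)^{\rho}-x^{\rho}=\rho\int_{x}^{x+y}u^{\rho-1}\,du$ one obtains the clean bound $\left|\frac{(x+y)^{\rho}-x^{\rho}}{\rho}\right|\ll y\,x^{\beta-1}$ for $0<y\le x$. Hence, letting $N(\sigma,T)$ denote the number of zeros with $\beta\ge\sigma$ and $0<\gamma\le T$, the error is controlled after partial summation by $\frac{y}{x}\int_{1/2}^{1}x^{\sigma}\,dN(\sigma,T)$, and the task reduces to showing this is $o(y)$ for a suitable $T=T(q)$.

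The exponent $5/8$ enters precisely here: bounding the above integral requires a zero-density estimate $N(\sigma,T)\ll T^{\,a(1-\sigma)}(\log T)^{A}$, and the optimal $\theta$ is governed by $a$. Ingham derived such an estimate from the fourth-power moment $\int_{0}^{T}|\zeta(\tfrac12+it)|^{4}\,dt\ll T(\log T)^{4}$, obtaining $N(\sigma,T)\ll T^{3(1-\sigma)/(2-\sigma)}(\log T)^{5}$; inserting this into the integral and optimizing the choice of $T$ against the interval length $y=c\,q^{\theta}$ forces $\theta\ge 5/8$ for the error to be $o(y)$, the endpoint $\theta=5/8$ being the delicate borderline case where the logarithmic factors must be absorbed with care. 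The main obstacle is therefore not the bookkeeping above but the zero-density estimate itself, which rests on the moment bound for $\zeta$ on the critical line and lies well outside the elementary machinery available in the excerpt (Chebyshev bounds and the explicit estimates for $\vartheta$, $\psi$ and $\pi$ in Lemmas \ref{psi-theta}--\ref{Dusart1,2}). For this reason the cleanest route in the present paper is to quote the result in the form established by Ingham \cite{Ingham-2}, and I would do exactly that, keeping the explicit-formula sketch above only as motivation for the appearance of the exponent $5/8$.
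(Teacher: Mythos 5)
Your proposal is correct and ends up exactly where the paper does: the lemma is stated without proof and simply cited to Ingham \cite{Ingham-2}, and your explicit-formula/zero-density sketch is an accurate account of why the exponent $5/8$ arises there. The only quibble is that Ingham's theorem holds for $\theta>5/8$ strictly (a defect of the paper's statement ``$\theta\geq5/8$'' rather than of your argument), so the borderline case you flag as delicate is in fact not covered.
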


\begin{lem}[\cite{Dusart-2}, Prop. 1.10]\label{pk+1<pk(1+1/log2 pk)}
For $k\geq463$,
$$
p_{k+1}\leq p_k\left(1+\frac{1}{2\log^2 p_k}\right).
$$
\end{lem}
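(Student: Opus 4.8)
The plan is to convert the multiplicative bound into a short-interval prime existence statement. Set $x=p_k$ and $y=x\bigl(1+\tfrac{1}{2\log^2 x}\bigr)$; then the claim $p_{k+1}\le p_k\bigl(1+\tfrac{1}{2\log^2 p_k}\bigr)$ is exactly $p_{k+1}\le y$, which is equivalent to $\pi(y)>\pi(x)$, i.e.\ to the existence of at least one prime in the half-open interval $(x,y]$. I would therefore aim to bound $\pi(y)-\pi(x)$ from below and show that it is $\ge 1$ for every $x=p_k$ with $k\ge 463$.

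The first attempt would be to feed the two endpoints into the explicit estimates of Lemma~\ref{Dusart1,2}, using the lower bound on $\pi(y)$ and the upper bound on $\pi(x)$. This alone is \emph{not} enough. The interval has relative length only $\tfrac{1}{2\log^2 x}$, so the genuine gain in $\tfrac{t}{\log t}$ across $(x,y]$ has size $\sim\tfrac{x}{2\log^3 x}$, whereas the mismatch between the first-order constants $1$ (lower bound) and $1.2762$ (upper bound) of Lemma~\ref{Dusart1,2} contributes a term $\approx -0.2762\,\tfrac{x}{\log^2 x}$, which is of larger order and swamps the gain. Thus the crude two-term bounds leave the lower estimate negative and cannot close the argument.

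The remedy is to replace Lemma~\ref{Dusart1,2} by a sharper three-term expansion of the form $\pi(t)=\tfrac{t}{\log t}\bigl(1+\tfrac{1}{\log t}+\tfrac{2+o(1)}{\log^2 t}\bigr)$ with \emph{explicit} error constants (Dusart's refined estimates). Once the leading and first-order constants agree at the two endpoints, the difference $\pi(y)-\pi(x)$ is governed by the $1/\log^3 x$ terms: the main gain $\sim\tfrac{x}{2\log^3 x}$ dominates the residual second-order discrepancy, so $\pi(y)-\pi(x)\to\infty$ and in particular exceeds $1$ as soon as $x$ is large enough.

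The main obstacle is calibrating the threshold to land exactly at $k=463$. The asymptotic comparison only yields the inequality beyond some effective $x_0$; to push $x_0$ down to $p_{463}$ one must track the explicit constants in the three-term expansion tightly and then verify the finitely many remaining primes $p_{463},p_{464},\dots$ below $x_0$ by direct computation. Balancing the size of the error constants against the length of this final check is the delicate point, and it is precisely why the statement is quoted from Dusart rather than reproved from the weaker bounds available here.
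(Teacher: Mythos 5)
The paper does not prove this lemma at all: it is quoted verbatim from Dusart (\cite{Dusart-2}, Prop.~1.10) and used as a black box, so there is no in-paper argument to compare yours against. Judged on its own terms, your analysis is sound as far as it goes. The reduction of the claim to the existence of a prime in $\bigl(x, x\bigl(1+\tfrac{1}{2\log^2 x}\bigr)\bigr]$ is correct, and your diagnosis of why Lemma~\ref{Dusart1,2} cannot close the argument is exactly right: the gain in the main term is of order $x/\log^3 x$ while the mismatch $1.2762-1$ between the second-order constants costs a term of order $x/\log^2 x$, so the naive lower bound on $\pi(y)-\pi(x)$ is negative. (The same failure occurs if one tries the $\vartheta$-based route with the paper's Lemma~\ref{ros-sch}, whose error terms are only $O(x/\log x)$.)

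However, what you have written is a proof plan, not a proof, and the gap is concrete: the ``sharper three-term expansion with explicit error constants'' on which everything hinges is never stated, so the key inequality --- that the residual third-order discrepancy between the upper and lower constants is strictly less than $\tfrac12$ --- is asserted rather than verified; and the finite computation needed to bring the effective threshold down to $p_{463}$ is not performed, nor is the range in which the refined upper bound on $\pi$ is actually valid addressed (these explicit third-order bounds typically hold only beyond fairly large thresholds, which makes the calibration you flag genuinely nontrivial rather than routine). In effect you have produced a correct explanation of why the lemma must be imported from Dusart rather than derived from the estimates available in this paper --- which is precisely the choice the authors made --- but you have not supplied a proof.
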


\begin{lem}[\cite{Dusart-3}, Th. 6.12]\label{prod1/(1-1/p)>e...}
We have
$$
\prod_{p\leq x}\left(\frac{p}{p-1}\right)<e^\gamma(\log x)\left(1+\frac{0.2}{\log^2x}\right),\qquad(x\geq2973),
$$
and
$$
\prod_{p\leq x}\left(\frac{p}{p-1}\right)>e^\gamma(\log x)\left(1-\frac{0.2}{\log^2x}\right),\qquad(x>1).
$$
\end{lem}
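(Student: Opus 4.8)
This lemma is an explicit form of Mertens' third theorem, so my plan is to reduce it to an explicit Mertens' \emph{second} theorem together with a careful accounting of the constants. I would begin by taking logarithms and expanding each factor:
$$
\log\prod_{p\leq x}\frac{p}{p-1}=-\sum_{p\leq x}\log\left(1-\frac1p\right)=\sum_{p\leq x}\frac1p+\sum_{p\leq x}\left(-\log\left(1-\frac1p\right)-\frac1p\right).
$$
The second sum converges absolutely as $x\to\infty$, since $-\log(1-1/p)-1/p=\sum_{m\geq2}1/(mp^m)=O(p^{-2})$. I would therefore write it as its full sum over all primes minus a tail $T(x)=\sum_{p>x}\sum_{m\geq2}1/(mp^m)$, where $0<T(x)<\sum_{p>x}p^{-2}=O(1/(x\log x))$, which is negligible against $1/\log^2 x$.

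The arithmetic heart of the argument is the Meissel--Mertens constant. Using the classical identity $M=\gamma+\sum_{p}\left(\log\left(1-\frac1p\right)+\frac1p\right)$, the full convergent sum above equals exactly $\gamma-M$. Substituting this and invoking an explicit form of Mertens' second theorem,
$$
\sum_{p\leq x}\frac1p=\log\log x+M+E(x),
$$
I obtain
$$
\log\prod_{p\leq x}\frac{p}{p-1}=\log\log x+\gamma+E(x)-T(x).
$$
Exponentiating gives $\prod_{p\le x}\frac{p}{p-1}=e^\gamma(\log x)\exp\left(E(x)-T(x)\right)$, so the whole problem reduces to controlling $E(x)-T(x)$ and showing $\left|\exp\left(E(x)-T(x)\right)-1\right|<0.2/\log^2 x$ with the correct sign on each side.

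To turn $E(x)$ into an explicit bound I would use Abel summation, writing $\sum_{p\le x}1/p=\int_{2^-}^{x}\frac{d\vartheta(t)}{t\log t}$ and substituting $\vartheta(t)=t+R(t)$: the main term reproduces $\log\log x$ up to a constant absorbed into $M$, while the contribution of $R(t)$ is controlled by the explicit estimates of Lemma~\ref{ros-sch}. The genuine obstacle is entirely quantitative: the stated error has order $1/\log^2 x$ with the sharp constant $0.2$, whereas bounds of the form $\vartheta(t)-t=O(t/\log t)$ only deliver order $1/\log x$. Reaching $0.2/\log^2 x$ therefore requires the full strength of the Rosser--Schoenfeld--Dusart explicit estimates together with direct verification on the finite range of moderate $x$ (which also fixes the threshold $x\ge2973$ for the upper inequality, the lower one holding for all $x>1$ because the error term there is one-signed). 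This optimization is precisely what is carried out in \cite{Dusart-3}, so I would cite it for the final constant rather than reconstruct it.
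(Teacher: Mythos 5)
The paper gives no proof of this lemma at all---it is quoted verbatim as Theorem 6.12 of \cite{Dusart-3}---and your proposal, after a correct sketch of the classical reduction to an explicit Mertens' second theorem via the identity $M=\gamma+\sum_{p}\left(\log\left(1-\frac1p\right)+\frac1p\right)$, likewise ends by citing \cite{Dusart-3} for the sharp constant $0.2$ and the threshold $x\geq 2973$. So your treatment is correct and, in effect, takes the same route as the paper: deferring the genuinely hard quantitative content to Dusart.
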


We will use the following inequality frequently
\begin{equation}\label{log(1+t)}
    \frac{t}{1+t}<\log(1+t)< t,\qquad (t>0),
\end{equation}
\section{Some properties of SA, CA and XA numbers}
This section is divided to three paragraphs, which we will exhibit several properties of superabundant, colossally abundant and extremely abundant numbers, respectively. When there is no ambiguity, we simply denote by $p$ the largest prime factor of $n$.
\begin{defn}
Let $g$ be a real-valued function and $A=\{a_n\}$ be an increasing sequence of integers. We say that $g$ is an increasing (decreasing) function on $A$ (or for $a_n\in A$), if $g(a_n)\leq g(a_{n+1})$ ($g(a_n)\geq g(a_{n+1})$) for all $n\in I$.
\end{defn}
\paragraph{\textbf{Superabundant Numbers}}
A positive integer $n$ is said to be superabundant if
$$
\frac{\sigma(n)}{n}>\frac{\sigma(m)}{m}\qquad\mbox{for all}\  m<n.
$$
In the starting point, we show that for any real positive $x\geq1$, there is at least one superabundant number in the interval $[x,2x)$. In other words
\begin{prop}\label{an+1<=2an}
Let $n<n'$ be two consecutive superabundant numbers. Then
$$
\frac{n'}{n}\leq2.
$$
\end{prop}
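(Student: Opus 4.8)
The plan is to exploit the extremal characterization of superabundant numbers together with a single monotonicity fact: multiplying any integer by $2$ strictly increases $\sigma(n)/n$. Writing $R(m)=\sigma(m)/m=\sum_{d\mid m}1/d$, I would show constructively that there is always a superabundant number in the interval $(n,2n]$, which immediately forces the next superabundant number $n'$ after $n$ to satisfy $n'\le 2n$.

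First I would establish the doubling inequality
\[
\frac{\sigma(2m)}{2m}>\frac{\sigma(m)}{m},\qquad (m\ge1).
\]
This follows at once from the identity $R(m)=\sum_{d\mid m}1/d$: every divisor of $m$ is a divisor of $2m$, while $2m$ itself divides $2m$ but does not divide $m$, so the divisor set of $2m$ properly contains that of $m$ and the sum of reciprocals strictly increases. (Alternatively one may invoke the multiplicativity of $R$ and note that only the $2$-adic factor $1+2^{-1}+\cdots+2^{-a}$ changes, strictly increasing.)

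Next, let $M$ be the smallest integer in $[1,2n]$ at which $R$ attains its maximum over that range. By minimality of $M$ among the maximizers, $R(k)<R(M)$ for every $k<M$, so $M$ is superabundant directly from the definition. Since $n$ is superabundant, $R(n)>R(k)$ for all $k<n$; combining the doubling inequality $R(2n)>R(n)$ with the maximality of $M$ gives $R(M)\ge R(2n)>R(n)$, whence $M\neq n$, and in fact $M>n$ since $R(M)>R(n)>R(k)$ for $k<n$ rules out $M\le n$. Thus $M$ is a superabundant number with $n<M\le 2n$. As $n'$ is by hypothesis the least superabundant number exceeding $n$, it follows that $n'\le M\le 2n$, i.e. $n'/n\le 2$.

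I expect no serious obstacle here; the only points demanding care are the \emph{strictness} in the doubling step (ensuring $R(2n)>R(n)$ rather than $\ge$) and the observation that the smallest maximizer of $R$ on an initial segment is automatically superabundant, which is precisely what upgrades a merely ``more abundant'' integer in $(n,2n]$ into an actual member of the sequence lying at or below $n'$.
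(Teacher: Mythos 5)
Your proof is correct and follows essentially the same route as the paper: both rest on the doubling inequality $\sigma(2n)/(2n)>\sigma(n)/n$ (the paper computes the ratio $\frac{2^{k_2+2}-1}{2^{k_2+2}-2}>1$ from the $2$-adic factor, which is your parenthetical alternative) and then conclude $n'\le 2n$. Your additional step --- that the least maximizer of $\sigma(m)/m$ on $[1,2n]$ is itself superabundant and lies in $(n,2n]$ --- merely makes explicit what the paper leaves implicit in ``Hence, $n'\le 2n$.''
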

\begin{proof}
Let $n=2^{k_2}\cdots p$. We compare $n$ with $2n$. In fact
$$
\frac{\sigma(2n)/(2n)}{\sigma(n)/n}=\frac{2^{k_2+2}-1}{2^{k_2+2}-2}>1.
$$
Hence, $n'\leq2n$.
\end{proof}

\begin{prop}[\cite{Al}, Th. 2]\label{Th 2}
Let $n=2^{k_2}\cdots q^{k_q}\cdots r^{k_r}\cdots p$ be a superabundant number, $q<r$, and
$$
\beta:= \left\lfloor\frac{ k_q \log q}{\log r}\right\rfloor,
$$
where $\lfloor x\rfloor$ is the greatest integer less than or equal to $x$.
Then $k_r$ has one of the three values : $\beta-1$, $\beta+1$, $\beta$.
\end{prop}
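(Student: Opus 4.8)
The plan is to read off both bounds from superabundance by forming, in each case, an integer $m<n$ that differs from $n$ only in the exponents of $q$ and $r$, and then invoking the defining inequality $\sigma(m)/m<\sigma(n)/n$. Write $\sigma(n)/n=\prod_p F(p,k_p)$ where $F(p,a)=\sigma(p^a)/p^a=(p^{a+1}-1)/\bigl(p^a(p-1)\bigr)$, and put $\rho(p,a)=F(p,a+1)/F(p,a)=(p^{a+2}-1)/(p^{a+2}-p)$, the factor by which $\sigma(\cdot)/(\cdot)$ grows when the exponent of $p$ is raised from $a$ to $a+1$. I will use only that $\rho(p,a)>1$, that $\rho(p,a)$ is strictly decreasing in $a$, and the relations $r^\beta\le q^{k_q}<r^{\beta+1}$ defining $\beta$ (so in particular $q^{k_q}>r^\beta$ strictly, since $q\neq r$).

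For the upper bound $k_r\le\beta+1$ I would take $m=nq/r$, which is a positive integer as $k_r\ge1$ and satisfies $m<n$ because $q<r$. Superabundance gives $\rho(q,k_q)<\rho(r,k_r-1)$. If $k_r\ge\beta+2$, then $\rho(r,k_r-1)\le\rho(r,\beta+1)$ by monotonicity, so it suffices to reach a contradiction by proving $\rho(q,k_q)\ge\rho(r,\beta+1)$. Writing $w=q^{k_q}$ and using $r^{\beta+1}>w$, clearing denominators reduces this to
\[(r-q)\bigl[w(qr-q-r)+1\bigr]>0,\]
which holds since $r>q$ and $qr-q-r=(q-1)(r-1)-1\ge1$. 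This is the easy half.

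For the lower bound $k_r\ge\beta-1$ the claim is immediate when $\beta\le2$ (trivial for $\beta\le1$, and for $\beta=2$ because every prime factor has exponent $\ge1$), so assume $\beta\ge3$. Let $s$ be the least integer with $q^{s}>r$; minimality and $q\neq r$ give the sharp bounds $r<q^{s}\le q(r-1)$, and then $q^{s}\le q(r-1)<r^{2}\le r^{\beta}\le q^{k_q}$ forces $k_q\ge s+1$, so $m=nr/q^{s}$ is a positive integer with $m<n$. Superabundance now yields $\rho(r,k_r)<F(q,k_q)/F(q,k_q-s)=(q^{k_q+1}-1)/(q^{k_q+1}-q^{s})$. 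If $k_r\le\beta-2$, then $\rho(r,k_r)\ge\rho(r,\beta-2)=(r^\beta-1)/(r^\beta-r)$, and the contradiction sought is
\[\frac{r^\beta-1}{r^\beta-r}\ge\frac{q^{k_q+1}-1}{q^{k_q+1}-q^{s}}.\]
Cross-multiplying and using $q^{k_q}\ge r^\beta+1$, the difference of the two sides is bounded below by $r^\beta\,(qr-q-q^{s}+1)$ plus the manifestly positive remainder $qr-q+q^{s}-r$; the coefficient $qr-q-q^{s}+1$ is $\ge1$ exactly because $q^{s}\le q(r-1)$, so the inequality holds and the contradiction follows.

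The delicate step is this lower bound. Offsetting a single increase in the large prime $r$ forces a decrease of $q$ by several units, so one must first guarantee feasibility — that $s\le k_q$, i.e.\ $k_q\ge s+1$ — and separately dispose of the cases $\beta\le2$; and then, crucially, use the sharp estimate $q^{s}\le q(r-1)$ coming from the minimality of $s$ and the distinctness of $q$ and $r$. With only the naive bound $q^{s}<qr$ the sign of the coefficient $qr-q-q^{s}+1$ is indeterminate and the final polynomial inequality can fail, so the whole argument hinges on that refinement; everything else is routine manipulation of $F$ and $\rho$ and their monotonicity.
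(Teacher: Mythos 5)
Your argument is correct: both halves are the standard exchange argument (comparing $n$ with $nq/r$ and with $nr/q^{s}$ and using monotonicity of $\rho(p,a)$), and the delicate points you flag — feasibility $k_q\ge s$, the small-$\beta$ cases, and the sharp bound $q^{s}\le q(r-1)$ from minimality of $s$ together with $q^{s-1}\neq r$ — are exactly the ones that need care, and you handle them correctly. The paper itself gives no proof of this proposition (it is imported verbatim as Theorem 2 of Alaoglu--Erd\H{o}s), and your proof is essentially a self-contained reconstruction of that classical argument.
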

In the next theorem we give a lower bound for the exponent $k_q$ related to the largest prime factor of $n$.

\begin{thm}\label{log p/log q<kq}
Let $n=2^{k_2}\cdots q^{k_q}\cdots p$ be a superabundant number with $2\leq q\leq p$. Then
$$
\left\lfloor\frac{\log p}{\log q}\right\rfloor\leq k_q.
$$
\end{thm}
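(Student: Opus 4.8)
The plan is to argue by contradiction using superabundance against a single well-chosen competitor. Write $a=\left\lfloor\frac{\log p}{\log q}\right\rfloor$, so that $q^a\le p<q^{a+1}$, and suppose toward a contradiction that $k_q\le a-1$. The idea is to trade one copy of the \emph{largest} prime $p$ for $a$ copies of $q$: set $m=nq^a/p$. Since $k_p\ge1$ this $m$ is a positive integer, and since $q^a\le p$ we have $m\le n$; in fact $q^a<p$ whenever $q<p$ (a prime $p$ cannot equal $q^a$ unless $a=1,\ p=q$), so $m<n$ and superabundance applies. The case $q=p$ is trivial, since then $a=1\le k_p$.

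Next I would exploit multiplicativity. With the local factor $\lambda_t(j):=\sigma(t^{j})/t^{j}=(1-t^{-(j+1)})/(1-t^{-1})$, only the contributions at $q$ and at $p$ change when passing from $n$ to $m$, so $\frac{\sigma(m)/m}{\sigma(n)/n}=\frac{\lambda_q(k_q+a)}{\lambda_q(k_q)}\cdot\frac{\lambda_p(k_p-1)}{\lambda_p(k_p)}$. The $p$-factor equals $\frac{1-p^{-k_p}}{1-p^{-(k_p+1)}}$, which is increasing in $k_p$ toward $1$ and hence is at least its value at $k_p=1$, namely $p/(p+1)$. Thus it suffices to prove $\frac{\lambda_q(k_q+a)}{\lambda_q(k_q)}\ge 1+\frac1p$, after which the product is $\ge1$ and contradicts $\sigma(m)/m<\sigma(n)/n$.

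For the $q$-factor I would use a second monotonicity: writing $w=q^{-(k_q+1)}$ gives $\frac{\lambda_q(k_q+a)}{\lambda_q(k_q)}=\frac{1-wq^{-a}}{1-w}$, whose derivative in $w$ is $\frac{1-q^{-a}}{(1-w)^2}>0$; so this ratio is increasing in $w$ and therefore decreasing in $k_q$. Under the assumption $k_q\le a-1$ its smallest value occurs at $k_q=a-1$, where it collapses by difference of squares to $\frac{1-q^{-2a}}{1-q^{-a}}=1+q^{-a}$. The required inequality $1+q^{-a}\ge 1+\frac1p$ is then precisely $q^a\le p$, which holds by the definition of $a$. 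Hence $\sigma(m)/m\ge\sigma(n)/n$ with $m<n$, the desired contradiction, and so $k_q\ge a$.

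The only genuinely inventive step is the choice of the competitor $m=nq^a/p$; once it is fixed, everything reduces to elementary manipulation of the factors $\lambda_t$. The technical points to watch are (a) confirming $m<n$ \emph{strictly}, using primality of $p$ to exclude $q^a=p$, and (b) verifying the two monotonicity claims so as to reduce to the extremal cases $k_p=1$ and $k_q=a-1$. I expect no serious obstacle beyond identifying this $m$: the clean collapse $\frac{1-q^{-2a}}{1-q^{-a}}=1+q^{-a}$ makes the final comparison turn on nothing more than the defining inequality $q^a\le p$.
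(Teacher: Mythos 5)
Your proof is correct and is essentially the paper's argument: both proceed by contradiction, trade one factor of the largest prime $p$ for a power of $q$ via a competitor of the form $m=nq^{j}/p<n$, and invoke superabundance together with multiplicativity of $\sigma(n)/n$. The only difference is that the paper takes $j=k_q+1$, for which the $q$-factor collapses immediately to $1/(1+q^{-(k_q+1)})$ and yields $p<q^{k_q+1}$ directly, whereas your choice $j=\left\lfloor\log p/\log q\right\rfloor$ requires the two (easy and correctly verified) monotonicity reductions to the extremal cases $k_p=1$ and $k_q=a-1$; in compensation your version does not assume the exponent of $p$ is $1$.
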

\begin{proof}
Let $k_q=k$ and suppose that $k\leq[\log p/\log q]-1$. Hence
\begin{equation}\label{qk<p}
    q^{k+1}<p.
\end{equation}
Now we compare values of $\sigma(s)/s$, taking $s=n$ and $s=m=nq^{k+1}/p$. Since $\sigma(s)/s$ is multiplicative, we restrict our attention to different factors. But $n$ is superabundant and $m<n$. Thus
\[
1<\frac{\sigma(n)/n}{\sigma(m)/m}=\frac{q^{2k+2}-q^{k+1}}{q^{2k+2}-1}\left(1+\frac{1}{p}\right)=\frac{1}{1+1/q^{k+1}}\left(1+\frac{1}{p}\right).
\]
Consequently, $p<q^{k+1}$, which contradicts (\ref{qk<p}).
\end{proof}

\begin{prop}[\cite{Al}, Th. 5]\label{Th 5}
Let $n=2^{k_2}\cdots q^{k_q}\cdots p$ be a superabundant number. If $k_q=k$ and $q<(\log p)^{\alpha}$, where $\alpha$ is a constant, then
\begin{equation}\label{log q(k+1)}
\log\frac{q^{k+1}-1}{q^{k+1}-q}>\frac{\log q}{p\log p}\left\{1+O\left(\frac{(\log\log p)^2}{\log p\log q}\right)\right\},
\end{equation}
\begin{equation}\label{log q(k+2)}
\log\frac{q^{k+2}-1}{q^{k+2}-q}<\frac{\log q}{p\log p}\left\{1+O\left(\frac{(\log\log p)^2}{\log p\log q}\right)\right\}.
\end{equation}
\end{prop}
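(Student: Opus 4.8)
The plan is to read both inequalities as statements about the multiplicative function $F(s)=\sigma(s)/s$. Raising the exponent of $q$ in $n$ from $k-1$ to $k$ multiplies $F$ by exactly $\frac{q^{k+1}-1}{q^{k+1}-q}$, and raising it from $k$ to $k+1$ multiplies $F$ by $\frac{q^{k+2}-1}{q^{k+2}-q}$; these are precisely the two quantities in the statement. Superabundance is the assertion $F(n)>F(m)$ for every $m<n$, so the strategy is to feed in comparison numbers $m$ that differ from $n$ only in the exponent of $q$ and in the exponents of the primes near the top, and read off the two bounds. Throughout I would use that a superabundant $n$ contains every prime up to $p$ and has non-increasing exponents, that $q^{k_q}$ has order roughly $p$ (a consequence of Theorem \ref{log p/log q<kq}), and the elementary bounds $\frac{t}{1+t}<\log(1+t)<t$ of \eqref{log(1+t)} to linearise every factor.

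For the second inequality I would first delete the largest prime $p$ and raise $k_q$ by one, i.e. take $m=nq/p<n$. Superabundance gives at once $\frac{q^{k+2}-1}{q^{k+2}-q}<1+\frac1p$, which is already of the right shape but with $\frac1p$ in place of $\frac{\log q}{p\log p}$. The point is that deleting $p$ frees a logarithmic amount $\log p$, of which the new power of $q$ uses only $\log q$; the gap $\log p-\log q$ must be reinvested. I would therefore choose $m$ as close to $n$ from below as possible by simultaneously raising the exponents of primes just below $p$ to soak up the freed room. Writing $V$ for the value so recovered, superabundance yields $\log\frac{q^{k+2}-1}{q^{k+2}-q}<\log(1+\tfrac1p)-\log V$, and since an optimal reinvestment runs at the threshold rate $\approx\frac{1}{p\log p}$ over a length $\log p-\log q$, the right-hand side collapses to $\frac{\log q}{p\log p}(1+o(1))$.

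The first inequality is dual but genuinely harder, because the natural comparison — delete the top power of $q$ and adjoin the next prime $p'$, asserting that the former is the more valuable — produces a number $(n/q)p'>n$, to which superabundance does not apply. Instead I would delete the top power of $q$ alone, freeing room $\log q$, and refill that room with the most valuable powers available; since $n$ already contains every prime $\le p$, these are necessarily higher powers of primes $\le p$. One then shows that the best such refill again runs at rate $\approx\frac{1}{p\log p}$, so that its value is $\frac{\log q}{p\log p}(1+o(1))$, and superabundance forces $\log\frac{q^{k+1}-1}{q^{k+1}-q}$ to exceed it.

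The main obstacle, and the entire content of the error term, is making these reinvestment steps quantitative: one must prove that freed logarithmic room can be refilled at a rate differing from the threshold $\frac{1}{p\log p}$ by a controlled amount. This is where the distribution of primes near $p$ enters — I would use Lemma \ref{pk+1<pk(1+1/log2 pk)} to guarantee $p'=p\bigl(1+O(\log^{-2}p)\bigr)$ and a prime-counting estimate to produce enough primes whose exponent-boundaries sit close to the threshold, so that the achievable refill rate is $\frac{1}{p\log p}\bigl(1+O(\cdots)\bigr)$. Feeding the hypothesis $q<(\log p)^\alpha$ — equivalently $\log q=O(\log\log p)$ — through the linearisations in \eqref{log(1+t)} and the location of the exponent $q^{k}$ assembles the accumulated errors into the stated $O\!\left(\frac{(\log\log p)^2}{\log p\log q}\right)$. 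Unlike the colossally abundant case, superabundance supplies no exact marginal identity, so recovering the leading constant $1$ (rather than a weaker constant such as $\tfrac12$) is precisely the delicate part; Proposition \ref{Th 2} and the non-increasing exponent structure are the tools I would use to keep the reinvestment near-optimal.
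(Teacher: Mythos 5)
The paper offers no proof of this proposition: it is quoted verbatim from Alaoglu and Erd\H{o}s \cite{Al} (their Theorem~5), so there is no in-paper argument to compare you against; your attempt has to stand on its own, and as written it is a roadmap with the decisive step missing. You correctly identify the two ratios as the factors by which $\sigma(s)/s$ changes when the exponent of $q$ is incremented, correctly reduce everything to comparisons $\sigma(m)/m<\sigma(n)/n$ for suitable $m<n$, and correctly observe that the naive choice $m=nq/p$ only yields $\frac{q^{k+2}-1}{q^{k+2}-q}<1+\frac1p$, hence a bound of order $1/p$, weaker than the claimed $\frac{\log q}{p\log p}$ by a factor $\log p/\log q$. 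But the ``reinvestment'' you propose to close that gap fails quantitatively. Raising the exponent of a prime $r$ near $p$ from $1$ to $2$ gains only $\log\frac{r^3-1}{r^3-r}\approx r^{-2}\approx p^{-2}$ while consuming room $\log r\approx\log p$, i.e.\ it runs at rate $\asymp\frac{1}{p^{2}\log p}$, a full factor of $p$ below the threshold rate $\frac{1}{p\log p}$; soaking up the freed room $\log p-\log q$ with such powers recovers essentially nothing, and your comparison still gives only $O(1/p)$. Symmetrically, for the lower bound the only refills fitting in a room of size $\log q=O(\log\log p)$ are exponent increments of primes $r\le q$, and the only control this paper provides on where those exponents sit (Corollary \ref{cplogp<2k<c'plogp} and its proof, giving $q^{k_q}\asymp p\log p/\log q$ with unspecified constants $c,c'$) pins down the order of magnitude but not the leading constant $1$ --- and the constant $1$ is precisely what distinguishes the proposition from its own easy Corollary \ref{cplogp<2k<c'plogp}.

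What is missing, concretely, is the construction of comparison integers whose marginal rate equals $\frac{1}{p\log p}\{1+O(\delta)\}$ rather than merely $\asymp\frac{1}{p\log p}$: one must trade prescribed amounts of logarithmic room against blocks of primes near $p$, using a prime-counting input to guarantee that such blocks exist with total logarithm within $o(\log q)$ of any target, and then propagate the hypothesis $q<(\log p)^{\alpha}$ through the linearisations to produce the error term $\frac{(\log\log p)^2}{\log p\log q}$. None of that is carried out in your sketch; you assert that ``an optimal reinvestment runs at the threshold rate'' and that ``the best such refill again runs at rate $\approx\frac{1}{p\log p}$,'' and you candidly note that recovering the constant $1$ ``is precisely the delicate part'' --- but that delicate part is the theorem. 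As it stands the argument proves only the weaker two-sided estimate $\log\frac{q^{k+1}-1}{q^{k+1}-q}\asymp\frac{\log q}{p\log p}$ up to absolute constants, not the stated asymptotic.
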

\begin{rem}[\cite{Al}, p. 453]\label{O(delta)}
Let $\delta$ denote the error term
\begin{align*}
\delta&=\frac{(\log\log p)^2}{\log p\log q},\qquad(q^{1-\theta}<\log p)\\
\delta&=\frac{\log p}{q^{1-\theta}\log q},\qquad(q^{1-\theta}>\log p)
\end{align*}
Then

\begin{equation}\label{log q(k+1)}
\log\frac{q^{k+1}-1}{q^{k+1}-q}>\frac{\log q}{\log p}\log\left(1+\frac1p\right)\left\{1+O(\delta)\right\},
\end{equation}
\begin{equation}\label{log q(k+2)}
\log\frac{q^{k+2}-1}{q^{k+2}-q}<\frac{\log q}{\log p}\log\left(1+\frac1p\right)\left\{1+O(\delta)\right\}.
\end{equation}

\end{rem}

\begin{cor}\label{cplogp<2k<c'plogp}
Let $n=2^{k_2}\cdots p$ be a superabundant number. Then there exist two positive constants $c$ and $c'$ such that
$$
cp\log p<2^{k_2}<c'p\log p.
$$
\end{cor}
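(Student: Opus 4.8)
The plan is to read off both inequalities from Proposition \ref{Th 5} applied to the smallest prime $q=2$, so that $k=k_2$, using only the elementary estimate (\ref{log(1+t)}) to pass between $\log\frac{2^{k+1}-1}{2^{k+1}-2}$ and $2^{-(k+1)}$. With $q=2$ and $k=k_2$ the two displayed inequalities of Proposition \ref{Th 5} become
$$
\log\frac{2^{k_2+1}-1}{2^{k_2+1}-2}>\frac{\log 2}{p\log p}\{1+O(\delta)\},\qquad \log\frac{2^{k_2+2}-1}{2^{k_2+2}-2}<\frac{\log 2}{p\log p}\{1+O(\delta)\},
$$
and the hypothesis $q<(\log p)^\alpha$ is met for $q=2$ once $p$ is large (for any fixed $\alpha$, as soon as $\log p>2^{1/\alpha}$). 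The first inequality will produce the upper bound $2^{k_2}<c'p\log p$ and the second the lower bound $2^{k_2}>c\,p\log p$.

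For the upper bound I would write $\frac{2^{k_2+1}-1}{2^{k_2+1}-2}=1+t$ with $t=1/(2^{k_2+1}-2)$ and apply the right half of (\ref{log(1+t)}) to get $\log(1+t)<t=\frac{1}{2^{k_2+1}-2}$. Combined with the first inequality above this gives $\frac{\log 2}{p\log p}\{1+O(\delta)\}<\frac{1}{2^{k_2+1}-2}$, whence $2^{k_2+1}-2<\frac{p\log p}{\log 2}\{1+O(\delta)\}$ and therefore $2^{k_2}<c'p\log p$. Symmetrically, writing $\frac{2^{k_2+2}-1}{2^{k_2+2}-2}=1+s$ with $s=1/(2^{k_2+2}-2)$ and using the left half of (\ref{log(1+t)}), namely $\log(1+s)>\frac{s}{1+s}=\frac{1}{2^{k_2+2}-1}$, the second inequality above yields $\frac{1}{2^{k_2+2}-1}<\frac{\log 2}{p\log p}\{1+O(\delta)\}$, i.e. $2^{k_2+2}-1>\frac{p\log p}{\log 2}\{1+O(\delta)\}^{-1}$, and hence $2^{k_2}>c\,p\log p$. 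Thus the leading constants are close to $\frac{1}{2\log 2}$ and $\frac{1}{4\log 2}$ respectively.

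The remaining work is to make the constants honest and uniform. For fixed $q=2$ the error term is $\delta=O\!\left((\log\log p)^2/\log p\right)\to 0$ as $p\to\infty$, so $\{1+O(\delta)\}$ stays in a fixed interval bounded away from $0$ and $\infty$ for all $p\ge p_0$; this fixes explicit admissible $c,c'$ valid for every superabundant $n$ whose largest prime factor exceeds $p_0$. Finally there are only finitely many superabundant numbers with $p\le p_0$, and for each of them $2^{k_2}/(p\log p)$ is a fixed positive number, so shrinking $c$ below the minimum of these finitely many ratios and enlarging $c'$ above their maximum makes both inequalities hold for all superabundant $n$. The only genuinely delicate point is the uniform control of the $O(\delta)$ factor together with the check of the hypothesis $q<(\log p)^\alpha$ for $q=2$; once these are secured, the corollary is exactly the two applications of (\ref{log(1+t)}) above plus the finite-case bookkeeping.
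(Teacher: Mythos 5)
Your proposal is correct and follows essentially the same route as the paper: both bounds are read off from Proposition \ref{Th 5} (with $q=2$, $k=k_2$) by sandwiching $\log\frac{q^{k+1}-1}{q^{k+1}-q}$ above by $q^{-k}$ and $\log\frac{q^{k+2}-1}{q^{k+2}-q}$ below by $\frac{1}{2}q^{-(k+1)}$ via the two halves of (\ref{log(1+t)}). Your added remarks on uniformity of the $O(\delta)$ factor and the finitely many small cases only make explicit what the paper leaves implicit.
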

\begin{proof}
By inequality (\ref{log(1+t)})
\begin{align*}
\log\frac{q^{k+1}-1}{q^{k+1}-q}=\log\left(1+\frac{q-1}{q^{k+1}-q}\right)<\frac{q-1}{q^{k+1}-q}\leq\frac{1}{q^k}
\end{align*}
and (\ref{log q(k+1)}), there exists a $C\,'>0$ such that
$$
q^k<C\,'\,\frac{p\log p}{\log q}.
$$
On the other hand, again from inequality (\ref{log(1+t)})
\begin{align*}
\log\frac{q^{k+2}-1}{q^{k+2}-q}=\log\left(1+\frac{q-1}{q^{k+2}-q}\right)>\frac{q-1}{q^{k+2}-1}>\frac{1}{2q^{k+1}}
\end{align*}
and (\ref{log q(k+2)}), there exists a $C>0$ such that
$$
q^k>C\,\frac{p\log p}{\log q}.
$$
Putting $c=C/\log q$,  $c\,'=C\,'/\log q$  we get the result.
\end{proof}
\begin{rem}\label{q k<2 k+2}
In \cite{Al} it was proved that $q^{k_q}<2^{k_2+2}$, and in p. 455, it was remarked that for large superabundant $n$, $q^{k_q}<2^{k_2}$ for $q>11$.
\end{rem}
\begin{cor}\label{p(n)<2 k-1 cor}
For large enough superabundant number $n=2^{k}\cdots p$
\begin{equation}\label{p<2 k-1}
p<2^{k-1}.
\end{equation}
\end{cor}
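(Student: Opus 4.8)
The plan is to read off the lower bound furnished by Corollary \ref{cplogp<2k<c'plogp} and to let the factor $\log p$ absorb the extra factor of $2$. That corollary gives a constant $c>0$ with
$$
2^{k}>c\,p\log p
$$
for every superabundant $n=2^{k}\cdots p$. Dividing by $2$,
$$
2^{k-1}>\frac{c}{2}\,p\log p,
$$
so $2^{k-1}>p$ as soon as $\tfrac{c}{2}\log p>1$, that is, as soon as $p>e^{2/c}$. Thus the inequality $p<2^{k-1}$ holds for every superabundant number whose largest prime factor exceeds the absolute constant $e^{2/c}$.

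It then remains to convert the condition \emph{$p$ large} into the hypothesis \emph{$n$ large}. For this I would recall that the largest prime factor $p=p(n)$ of a superabundant number tends to infinity with $n$. Indeed, if $p$ stayed bounded, say $p\le P$, then using $\frac{q^{k_q+1}-1}{q^{k_q}(q-1)}<\frac{q}{q-1}$ we would have
$$
\frac{\sigma(n)}{n}=\prod_{q\mid n}\frac{q^{k_q+1}-1}{q^{k_q}(q-1)}<\prod_{q\le P}\frac{q}{q-1},
$$
which is a fixed finite bound. But along the superabundant sequence $\sigma(n)/n$ is strictly increasing, and by Gronwall's theorem $\sigma(n)/n\sim e^\gamma\log\log n$ on a subsequence, so these record values are unbounded. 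Hence $p(n)\rightarrow\infty$, and every sufficiently large superabundant number satisfies $p>e^{2/c}$; combined with the displayed chain this yields $p<2^{k-1}$.

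I do not expect a genuinely hard step here: the estimate $p<2^{k-1}$ is precisely the lower bound of Corollary \ref{cplogp<2k<c'plogp}, with the logarithmic factor used to defeat the constant $2$ and thereby sharpen the cruder bound $p<2^{k}$ coming from Remark \ref{q k<2 k+2}. The only point meriting a line of justification is the passage from \emph{$p$ large} to \emph{$n$ large}, which rests on the divergence of $\sigma(n)/n$ along the superabundant numbers.
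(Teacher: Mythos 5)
Your proof is correct and follows the route the paper clearly intends: the paper states this corollary without proof, immediately after Corollary \ref{cplogp<2k<c'plogp}, and the evident derivation is exactly yours — the lower bound $2^{k}>c\,p\log p$ beats the factor $2$ once $\log p$ is large, and $p=p(n)\rightarrow\infty$ along SA (which also follows at once from Proposition \ref{p=log}, $p\sim\log n$). Your self-contained justification that $p(n)$ is unbounded via the divergence of $\sigma(n)/n$ on SA is sound and adds nothing controversial.
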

\begin{prop}
Let $n=2^{k}\cdots p$ be a superabundant number. Then for large enough $n$
$$
\left\lfloor\frac{k\log2}{\log p}\right\rfloor=1.
$$
\end{prop}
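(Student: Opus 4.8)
The plan is to unravel the floor condition into a two-sided bound and then match each side to a corollary already proved. Since $\lfloor x\rfloor=1$ is equivalent to $1\le x<2$, the assertion $\lfloor k\log2/\log p\rfloor=1$ is equivalent to $\log p\le k\log2<2\log p$, that is, to the pair of inequalities $p\le 2^{k}<p^{2}$. I would therefore establish the lower bound $2^{k}>p$ and the upper bound $2^{k}<p^{2}$ separately, each for $n$ (hence $p$) large enough, noting first that $p\to\infty$ as $n\to\infty$ along SA, so the phrase ``large enough $n$'' is meaningful.

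For the lower bound I would simply invoke Corollary \ref{p(n)<2 k-1 cor}, which states that $p<2^{k-1}$ for all sufficiently large superabundant $n$. This immediately gives $2^{k}=2\cdot 2^{k-1}>2p>p$, hence $k\log2>\log p$ and $k\log2/\log p>1$.

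For the upper bound I would use Corollary \ref{cplogp<2k<c'plogp}, which supplies a constant $c'>0$ with $2^{k}<c'\,p\log p$. Because $\log p=o(p)$, for all $p$ beyond some threshold we have $c'\log p<p$, and therefore $2^{k}<c'\,p\log p<p^{2}$; equivalently $k\log2<2\log p$, so that $k\log2/\log p<2$. Combining the two estimates yields $1<k\log2/\log p<2$ for all large enough $n$, whence the floor equals $1$.

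The genuine work here has already been discharged by the two corollaries, so I do not expect a real obstacle beyond bookkeeping. The only points that require care are the correct translation of the floor equality into the bound $p\le 2^{k}<p^{2}$, and the elementary observation that $c'\log p<p$ holds eventually; this last step is precisely where the hypothesis ``for large enough $n$'' is actually consumed, and it is worth stating explicitly rather than leaving implicit.
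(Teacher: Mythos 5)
Your proof is correct and takes essentially the same route as the paper's: both reduce the floor equality to the two-sided bound $1\le k\log 2/\log p<2$ (equivalently $p\le 2^{k}<p^{2}$) and extract it from Corollary \ref{cplogp<2k<c'plogp}, the only cosmetic difference being that you obtain the lower bound via Corollary \ref{p(n)<2 k-1 cor} rather than by taking logarithms of $cp\log p<2^{k}$ directly.
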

\begin{proof}
By Corollary \ref{cplogp<2k<c'plogp}
$$
\log(cp\log p)<k\log 2<\log(c'p\log p).
$$
Hence, for large enough $p$
$$
1<1+\frac{\log(c\log p)}{\log p}<\frac{k\log2}{\log p}<1+\frac{\log(c'\log p)}{\log p}<2.
$$
Therefore,
$$
\left\lfloor\frac{k\log2}{\log p}\right\rfloor=1.
$$
\end{proof}


\begin{prop}[\cite{Al}, Th. 7]\label{p=log}
If $n=2^k\cdots p$ is a superabundant number, then
$$
p\sim\log n.
$$
\end{prop}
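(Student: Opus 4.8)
The plan is to prove the equivalent statement $\log n\sim p$ by squeezing $\log n$ between $\vartheta(p)$ and a quantity asymptotic to $p$, and then to invoke the prime number theorem in the form $\vartheta(p)\sim p$.

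First, for the lower bound I would use that a superabundant number is divisible by every prime up to its largest prime factor; this is already built into the notation $n=2^{k_2}\cdots q^{k_q}\cdots p$, which records that all primes $\leq p$ occur with positive exponent. Hence the primorial $\prod_{q\leq p}q$ divides $n$, and therefore
$$
\log n=\sum_{q\leq p}k_q\log q\geq\sum_{q\leq p}\log q=\vartheta(p).
$$
By Lemma \ref{psi-theta} we have $\psi(p)-\vartheta(p)<1.42620\sqrt p$, so together with $\psi(p)\sim p$ (equivalent to the prime number theorem, see (\ref{psi x-x})) this gives $\vartheta(p)\sim p$, and in particular $\liminf_{n\to\infty}(\log n)/p\geq1$.

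For the upper bound I would bound each prime power $q^{k_q}$ uniformly. Combining Corollary \ref{cplogp<2k<c'plogp}, which gives $2^{k_2}<c'\,p\log p$, with Remark \ref{q k<2 k+2}, which gives $q^{k_q}<2^{k_2+2}$, one obtains a single constant $C=4c'$ with $q^{k_q}<C\,p\log p$ for every prime $q\leq p$ (the case $q=2$ being covered directly by the corollary). Taking logarithms and summing over the $\pi(p)$ primes dividing $n$ yields
$$
\log n=\sum_{q\leq p}k_q\log q<\pi(p)\bigl(\log C+\log p+\log\log p\bigr).
$$
Inserting the bound $\pi(p)<\frac{p}{\log p}\bigl(1+\frac{1.2762}{\log p}\bigr)$ of Lemma \ref{Dusart1,2} and dividing by $p$, the right-hand side becomes $\bigl(1+\frac{1.2762}{\log p}\bigr)\bigl(1+\frac{\log\log p+\log C}{\log p}\bigr)$, which tends to $1$ as $p\to\infty$. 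Hence $\limsup_{n\to\infty}(\log n)/p\leq1$.

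Combining the two estimates gives $\log n\sim p$, equivalently $p\sim\log n$. The routine analytic inputs (the prime number theorem for $\vartheta$ and the Dusart bound on $\pi$) are all available above, so I expect the only point genuinely requiring care to be the uniform upper bound $q^{k_q}<C\,p\log p$ valid for all prime factors simultaneously: one must check that Corollary \ref{cplogp<2k<c'plogp} and Remark \ref{q k<2 k+2} indeed merge into a single constant good for every $q\leq p$, and that the resulting $\log\log p$ term is genuinely negligible against the $p/\log p$ factor produced by $\pi(p)$.
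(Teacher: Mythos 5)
Your proof is correct. The paper itself supplies no argument for this proposition --- it is quoted directly as Theorem 7 of Alaoglu--Erd\H{o}s --- and your reconstruction (lower bound $\log n\geq\vartheta(p)\sim p$ from $k_q\geq 1$ for all $q\leq p$; upper bound from $q^{k_q}<2^{k_2+2}<4c'\,p\log p$ summed over the $\pi(p)$ prime factors) is essentially the original Alaoglu--Erd\H{o}s argument, correctly assembled from Corollary \ref{cplogp<2k<c'plogp}, Remark \ref{q k<2 k+2} and Lemma \ref{Dusart1,2}, with the single uniform constant $C=4c'$ handled as you describe.
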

From Corollary \ref{cplogp<2k<c'plogp} and Proposition \ref{p=log} it follows that
\begin{prop}
If $n=2^{k_2}\cdot3^{k_3}\cdots p(n)$ is a superabundant number, then for large enough $n$
\[
\log n<2^{k_2}.
\]
\end{prop}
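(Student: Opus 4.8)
The plan is to read off the result directly from the two cited facts about the prime factorization of a superabundant number $n=2^{k_2}\cdots p$. Corollary~\ref{cplogp<2k<c'plogp} supplies the lower bound $2^{k_2}>cp\log p$ for a fixed positive constant $c$, while Proposition~\ref{p=log} gives the asymptotic $p\sim\log n$. The strategy is to show that the quantity $cp\log p$ already exceeds $\log n$ for large $n$, so that a fortiori $2^{k_2}>\log n$.

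First I would use $p\sim\log n$ to fix a convenient comparison: for all sufficiently large $n$ we have $p>\tfrac12\log n$. Substituting this into the lower bound from the corollary gives
\[
2^{k_2}>cp\log p>\frac{c}{2}(\log n)(\log p).
\]
Next I would observe that, since $p\sim\log n$ and $\log n\to\infty$, the largest prime factor satisfies $p\to\infty$, hence $\log p\to\infty$ as $n\to\infty$. Because $c$ is a fixed constant, the coefficient $\tfrac{c}{2}\log p$ eventually exceeds $1$; for every such $n$ the displayed inequality yields $2^{k_2}>\log n$, which is exactly the assertion.

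I do not expect a genuine obstacle here: the proposition is a clean consequence of growth rates already established earlier in the section. The one point deserving a moment's attention is that the constant $c$ from Corollary~\ref{cplogp<2k<c'plogp} may be small, so one must not try to conclude from $2^{k_2}>cp\log p$ alone; the key is that the \emph{diverging} factor $\log p$ compensates for any fixed $c$, forcing $\tfrac{c}{2}\log p>1$ once $n$ is large enough. Consequently the threshold on $n$ is left implicit, in keeping with the qualitative ``for large enough $n$'' formulation of the statement.
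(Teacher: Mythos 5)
Your argument is correct, but it is not the proof the paper actually writes down. You derive the result from Corollary~\ref{cplogp<2k<c'plogp} ($2^{k_2}>cp\log p$) together with Proposition~\ref{p=log} ($p\sim\log n$), substituting $p>\tfrac12\log n$ and letting the diverging factor $\log p$ absorb the unspecified constant $c$; this is sound, and in fact it is exactly the derivation the paper announces in the sentence introducing the proposition (``From Corollary~\ref{cplogp<2k<c'plogp} and Proposition~\ref{p=log} it follows that\ldots''). The proof the paper then supplies is different: it writes $\log n=\sum_q\log q^{k_q}$, bounds the blocks for the first five primes by $\log 2^{k_2+2}$ and the remaining ones by $\log 2^{k_2}$ via Remark~\ref{q k<2 k+2}, counts the terms with Dusart's bound on $\pi(p)$ from Lemma~\ref{Dusart1,2}, and finishes with $p<2^{k_2-1}$ from Corollary~\ref{p(n)<2 k-1 cor} to show the ratio $\log n/2^{k_2}$ is less than $1$. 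Your route is shorter and isolates the one structural fact that matters (a fixed constant is beaten by the diverging $\log p$), at the cost of leaning on the unquantified Alaoglu--Erd\H{o}s constant $c$ and an implicit threshold; the paper's route is more laborious but yields an explicit, nearly quantitative chain of inequalities that does not depend on that constant. Both are acceptable proofs of the qualitative statement as posed.
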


\begin{proof}
We use Remark \ref{q k<2 k+2}, Lemma \ref{Dusart1,2} and Corollary  \ref{p(n)<2 k-1 cor} to get
\begin{align*}
  \frac{\log n}{2^{k_2}} &= \frac{\sum \log q^{k_q}}{2^{k_2}} \\
         &< \frac{5\log2^{k_2+2}+(\pi(p(n))-5)\log2^{k_2})}{2^{k_2}} \\
         &= \pi(p(n))\frac{\log2^{k_2}}{2^{k_2}}+\frac{10\log2}{2^{k_2}} \\
         &< \frac{p(n)}{\log p(n)}\left(1+\frac{1.2762}{\log p(n)}\right)\frac{\log 2^{k_2}}{2^{k_2}}+\frac{10\log2}{2^{k_2}}\\
         &= \frac{p(n)}{2^{k_2}}\frac{\log 2^{k_2}}{\log p(n)}\left(1+\frac{1.2762}{\log p(n)}\right)+\frac{10\log2}{2^{k_2}}\\
         &<1.
\end{align*}
\end{proof}
\begin{prop}
Let $n=2^{k_2}\cdots q^{k_q}\cdots p$ be a superabundant number. Then

\begin{equation}\label{psi(p(n))<log n}
\psi(p)\leq\log n.
\end{equation}
Moreover,
\begin{equation}\label{psilog}
    \lim_{n\rightarrow\infty}\frac{\psi(p)}{\log n}=1.
\end{equation}
\end{prop}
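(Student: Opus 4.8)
The plan is to prove the inequality (\ref{psi(p(n))<log n}) by a term-by-term comparison of the two sums defining $\psi(p)$ and $\log n$, and then to deduce the limit (\ref{psilog}) by combining this bound with the prime number theorem and Proposition \ref{p=log}.

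First I would write both quantities as sums over the primes $q\leq p$. Since $n=2^{k_2}\cdots q^{k_q}\cdots p$ is superabundant, every prime $q$ with $2\leq q\leq p$ actually divides $n$ (the exponents are positive and non-increasing in $q$, so the factorization has no gaps), whence
$$
\log n=\sum_{q\leq p}k_q\log q,\qquad
\psi(p)=\sum_{q\leq p}\left\lfloor\frac{\log p}{\log q}\right\rfloor\log q,
$$
the second expression being merely the definition of $\psi$. Now Theorem \ref{log p/log q<kq} supplies exactly $\lfloor\log p/\log q\rfloor\leq k_q$ for each such prime $q$, so comparing the two sums term by term yields $\psi(p)\leq\log n$, which is (\ref{psi(p(n))<log n}).

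For the limit, the inequality just established already gives $\psi(p)/\log n\leq1$ for every superabundant $n$. To obtain the matching lower estimate I would factor
$$
\frac{\psi(p)}{\log n}=\frac{\psi(p)}{p}\cdot\frac{p}{\log n}.
$$
As $n\rightarrow\infty$ the largest prime factor $p=p(n)\rightarrow\infty$, so the prime number theorem in the form (\ref{psi x-x}), namely $\psi(x)\sim x$, gives $\psi(p)/p\rightarrow1$, while Proposition \ref{p=log} gives $p/\log n\rightarrow1$. Hence the product tends to $1$, establishing (\ref{psilog}).

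The argument is short, and the only points requiring care are structural rather than computational: one must check that the set of primes occurring in the factorization of $n$ is precisely $\{q:\ q\leq p\}$, so that the two sums range over exactly the same primes, and one must observe that $p(n)\rightarrow\infty$ in order to invoke $\psi(p)\sim p$. I would expect the verification that no intermediate prime is missing from $n$ — a standard consequence of superabundance — to be the most delicate point, although it is already implicit in the notation $n=2^{k_2}\cdots q^{k_q}\cdots p$ used throughout the paper.
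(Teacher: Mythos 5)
Your proof is correct and follows essentially the same route as the paper: the inequality comes from the term-by-term comparison $\lfloor\log p/\log q\rfloor\leq k_q$ of Theorem \ref{log p/log q<kq} applied to $\psi(p)=\sum_{q\leq p}\lfloor\log p/\log q\rfloor\log q\leq\sum_{q\leq p}k_q\log q=\log n$, and the limit from combining $\psi(x)\sim x$ with Proposition \ref{p=log}. Your explicit factorization $\psi(p)/\log n=(\psi(p)/p)\cdot(p/\log n)$ and the remark that superabundant numbers have no prime gaps merely spell out details the paper leaves implicit.
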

\begin{proof} In fact, by Theorem \ref{log p/log q<kq}
$$
\psi(p)=\sum_{q\leq p}\left\lfloor\frac{\log p}{\log q}\right\rfloor\log q\leq\sum_{q\leq p}k_q\log q=\log n.
$$
In order to prove (\ref{psilog}) we appeal to (\ref{psi x-x}) and Proposition \ref{p=log}.
\end{proof}

\begin{lem}
For large enough $n=2^k\cdots q^{k_q}\cdots p\in SA$
$$
\frac{\log n}{\vartheta(p)}<1+\frac{c\sqrt{2\log p}}{\sqrt{p}}\left(1+\frac{1.5}{\log p}\right)
$$
\end{lem}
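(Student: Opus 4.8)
The plan is to show that $\log n$ exceeds $\vartheta(p)$ by only a quantity of order $\sqrt{p\log p}$, and then to divide by $\vartheta(p)\sim p$. Writing $n=2^{k_2}\cdots q^{k_q}\cdots p$ and recalling that a superabundant number contains every prime up to its largest prime factor $p$, I would start from the exact identity
\[
\log n-\vartheta(p)=\sum_{q\le p}k_q\log q-\sum_{q\le p}\log q=\sum_{q\le p}(k_q-1)\log q .
\]
Since $k_q\ge1$ for every prime $q\le p$, the only primes that contribute to the right-hand side are those with $k_q\ge2$; every prime carrying exponent $1$ drops out entirely. This is the crucial observation: the ``main mass'' $\vartheta(p)$ of $\log n$ comes from the abundance of exponent-$1$ primes, and only the small primes with higher exponents produce the error term.

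Next I would localise the contributing primes. For $q>11$ and $n$ large, Remark \ref{q k<2 k+2} gives $q^{k_q}<2^{k_2}$, and Corollary \ref{cplogp<2k<c'plogp} gives $2^{k_2}<c'p\log p$. Hence any prime with $k_q\ge2$ satisfies $q^2\le q^{k_q}<c'p\log p$, i.e. $q<Q:=\sqrt{c'p\log p}$; the finitely many primes $q\le11$ contribute at most $O(\log p)$ and are absorbed into the error. For each surviving prime the bound $q^{k_q}<Q^{2}$ yields $(k_q-1)\log q\le k_q\log q=\log q^{k_q}<2\log Q$, so that
\[
\log n-\vartheta(p)<2\,\pi(Q)\log Q+O(\log p).
\]
One may sharpen this by subtracting $\vartheta(Q)$, writing $(k_q-1)\log q<2\log Q-\log q$ and summing, which halves the leading constant; this refinement is precisely what fixes the numerical value of $c$.

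Finally I would insert the explicit estimates. Dusart's bound (Lemma \ref{Dusart1,2}) gives $\pi(Q)<\frac{Q}{\log Q}\bigl(1+\frac{1.2762}{\log Q}\bigr)$, whence $2\pi(Q)\log Q<2Q\bigl(1+\frac{1.2762}{\log Q}\bigr)=2\sqrt{c'p\log p}\bigl(1+\frac{1.2762}{\log Q}\bigr)$, while the lower bound $\vartheta(p)>p\bigl(1-\frac{0.068}{\log p}\bigr)$ of Lemma \ref{ros-sch} controls the denominator. Dividing, and using the algebraic identity $\sqrt{p\log p}/p=\tfrac{1}{\sqrt2}\,\sqrt{2\log p}/\sqrt p$, collapses the leading term into the stated shape $c\,\sqrt{2\log p}/\sqrt p$ with $c$ a fixed multiple of $\sqrt{c'}$, while the terms $\frac{1.2762}{\log Q}$, $\frac{0.068}{\log p}$ and the $\log\log p$ hidden inside $\log Q=\tfrac12\log(c'p\log p)$ are folded into the correction factor $\bigl(1+\frac{1.5}{\log p}\bigr)$ for $p$ large.

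The routine part is the chain of inequalities above; the delicate part, and the main obstacle, is the explicit constant bookkeeping. In particular, matching the precise factor $\bigl(1+\frac{1.5}{\log p}\bigr)$ requires controlling $\frac{1.2762}{\log Q}$, which is not negligible since $\log Q$ is only about $\tfrac12\log p$, against the saving gained from subtracting $\vartheta(Q)$ and from the sharp lower bound on $\vartheta(p)$. It is here that the choice of $c$ and the hypothesis that $n$ be large enough are used to guarantee that the inequality closes with the clean constant $1.5$.
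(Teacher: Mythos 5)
Your argument is correct and is essentially the paper's own proof: both start from the identity $\log n-\vartheta(p)=\sum_{q\le p}(k_q-1)\log q$, note that only primes of exponent at least $2$ contribute and that these all lie below $\sqrt{O(p\log p)}$ (the paper invokes the Alaoglu--Erd\H{o}s bound $x_2^2<2p\log p$ where you use $q^{k_q}<2^{k_2}<c'p\log p$), bound the resulting sum by $O(\sqrt{p\log p})$, and divide by $\vartheta(p)>p\left(1-O(1/\log p)\right)$. Your closing worry about matching the factor $\left(1+\frac{1.5}{\log p}\right)$ exactly is moot, since the constant $c$ in the statement is unspecified and absorbs any bounded discrepancy --- exactly as in the paper, which bounds the sum loosely by $c\,\vartheta(x_2)$ before applying the explicit $\vartheta$ estimates.
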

\begin{proof}
Let $x_2$ be the largest prime factor with exponent 2. From the error term in page 453 of Alaoglu-Erd\H{o}s $x_2^2<2p\log p$ for large enough $n\in SA$.
\begin{align*}
\log n-\vartheta(p)=&\sum_{2\leq q\leq x_2}(k_q-1)\log q<c\,\vartheta(x_2)\\
\leq& c\,\vartheta(\sqrt{2p\log p})<c\sqrt{2p\log p}\left(1+\frac{2(0.021)}{\log2p\log p}\right)\\
\end{align*}
So
\begin{align*}
\frac{\log n}{\vartheta(p)}-1<\frac{c\sqrt{2p\log p}\left(1+\frac{2(0.021)}{\log2p\log p}\right)}{p\left(1-\frac{1.25}{\log p}\right)}<\frac{c\sqrt{2\log p}}{\sqrt{p}}\left(1+\frac{1.5}{\log p}\right),\qquad(p>5342)
\end{align*}

\end{proof}

\begin{prop}\label{p(1+e)<log an<p(1+e)}
For $n=2^{k}\cdots p\in SA$ we have
$$
\log n>p\left(1-\frac{0.0221}{\log p}\right),
$$
and for large enough $n\in SA$
$$
\log n<p\left(1+\frac{0.5}{\log p}\right).
$$
\end{prop}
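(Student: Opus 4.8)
The plan is to derive each of the two inequalities from a sharp prime-counting estimate that is already available in the excerpt: the lower bound will come from the relation $\psi(p)\le\log n$ recorded in (\ref{psi(p(n))<log n}) together with a lower bound for $\psi$, while the upper bound will come from the preceding lemma comparing $\log n$ with $\vartheta(p)$ together with an upper bound for $\vartheta$. In both cases the matching constants ($0.0221$ and $0.5$) are dictated by the respective Rosser--Schoenfeld-type estimates, so the work is essentially to line the ingredients up and control one square-root correction term.

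For the lower bound I would start from (\ref{psi(p(n))<log n}), which gives $\log n\ge\psi(p)$ for every superabundant $n=2^{k}\cdots p$. Invoking the estimate $\psi(x)>x\bigl(1-\tfrac{0.0221}{\log x}\bigr)$, valid for $x\ge 89{,}909$ (Lemma \ref{ros-sch}), I immediately obtain
\[
\log n\ \ge\ \psi(p)\ >\ p\left(1-\frac{0.0221}{\log p}\right),\qquad(p\ge 89{,}909).
\]
Since $p\sim\log n$ by Proposition \ref{p=log}, there are only finitely many superabundant numbers whose largest prime factor lies below $89{,}909$; for those I would verify the inequality by direct computation, noting that for such $n$ the quantity $\log n=\sum_{q}k_q\log q$ already exceeds $\psi(p)$ comfortably. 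This settles the lower bound for all $n\in SA$, with no largeness hypothesis.

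For the upper bound I would take the conclusion of the lemma immediately preceding the statement, namely $\log n<\vartheta(p)\bigl(1+\tfrac{c\sqrt{2\log p}}{\sqrt{p}}(1+\tfrac{1.5}{\log p})\bigr)$ for large $n\in SA$, and combine it with the classical bound $\vartheta(x)<x\bigl(1+\tfrac{0.021}{\log x}\bigr)$ from Lemma \ref{ros-sch}. This gives
\[
\log n<p\left(1+\frac{0.021}{\log p}\right)\left(1+\frac{c\sqrt{2\log p}}{\sqrt{p}}\left(1+\frac{1.5}{\log p}\right)\right).
\]
It then remains to show that, for $p$ large enough, the right-hand side is at most $p\bigl(1+\tfrac{0.5}{\log p}\bigr)$. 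Writing the product as $1+\tfrac{0.021}{\log p}+R$, the excess $R$ is of order $\tfrac{(\log p)^{1/2}}{p^{1/2}}$, so $R\log p$ is of order $\tfrac{(\log p)^{3/2}}{p^{1/2}}\to0$. Hence the coefficient of $1/\log p$ on the right is $0.021+o(1)$, which stays strictly below $0.5$ once $p$ (equivalently $n$) is large enough, yielding the claimed upper bound.

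The only genuine point requiring care is this last step: I must confirm that the $\sqrt{\log p/p}$ correction inherited from the previous lemma is asymptotically negligible against the $1/\log p$ terms, i.e. that it survives multiplication by $\log p$ and still tends to $0$. This is where the generous slack between $0.021$ and $0.5$ is used, and it is comfortable because $(\log p)^{3/2}/\sqrt{p}\to0$ rapidly. Apart from this asymptotic domination, the argument is mechanical, the finitely many small-$p$ cases for the lower bound being the only remaining bookkeeping.
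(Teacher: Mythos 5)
Your proposal is correct and follows essentially the same route as the paper: the lower bound from $\psi(p)\le\log n$ combined with the estimate $\psi(x)>x\left(1-\frac{0.0221}{\log x}\right)$, and the upper bound by writing $\frac{\log n}{p}=\frac{\log n}{\vartheta(p)}\cdot\frac{\vartheta(p)}{p}$ and invoking the preceding lemma together with $\vartheta(x)<x\left(1+\frac{0.021}{\log x}\right)$. You are in fact slightly more careful than the paper, which does not comment on the finitely many superabundant $n$ with $p<89{,}909$ in the first inequality nor spell out why the $\sqrt{\log p/p}$ correction is absorbed into the $0.5/\log p$ slack.
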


\begin{proof}
The first inequality holds by (\ref{psi(p(n))<log n}) and Lemma \ref{ros-sch}. Concerning the second inequality \begin{align*}
\frac{\log n}{p}=&\frac{\log n}{\vartheta(p)}\,\frac{\vartheta(p)}{p}\\
<&\left\{1+\frac{c\sqrt{2\log p}}{\sqrt{p}}\left(1+\frac{1.5}{\log p}\right)\right\}\left(1+\frac{0.021}{\log p}\right)\\
<&\left(1+\frac{0.5}{\log p}\right)
\end{align*}

\end{proof}
From Lemma \ref{x(1+E)<y<x(1+E) inverse} and Proposition \ref{p(1+e)<log an<p(1+e)}, we conclude
\begin{cor}\label{log an<p<log an}
For large enough $n=2^{k}\cdots p\in SA$, it has
$$
\log n\left(1-\frac{0.5}{\log\log n}\right)<p<\log n\left(1+\frac{0.0222}{\log\log n}\right).
$$
\end{cor}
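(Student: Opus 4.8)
The plan is to invert the two-sided estimate of Proposition~\ref{p(1+e)<log an<p(1+e)}, which controls $\log n$ in terms of $p$, into a two-sided estimate for $p$ in terms of $\log n$, by a direct application of Lemma~\ref{x(1+E)<y<x(1+E) inverse}. First I would combine the two displays of Proposition~\ref{p(1+e)<log an<p(1+e)} into the single chain
$$
p\left(1-\frac{0.0221}{\log p}\right)<\log n<p\left(1+\frac{0.5}{\log p}\right),
$$
valid for all large enough $n=2^{k}\cdots p\in SA$ (the left inequality holds for every such $n$, the right one for $n$ large). This is exactly the hypothesis of Lemma~\ref{x(1+E)<y<x(1+E) inverse} under the substitution $x=p$, $y=\log n$, $a=0.0221$, $b=0.5$; the side condition $\log x>a$ is trivially satisfied since $p\geq2$.

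Applying the lemma then yields
$$
\log n\left(1-\frac{c}{\log\log n}\right)<p<\log n\left(1+\frac{d}{\log\log n}\right),
$$
with admissible constants
$$
c=b\left(1-\frac{b-\frac{b}{\log p}}{\log p+b}\right),\qquad d=a\left(1+\frac{a+\frac{b}{\log p}}{\log p-a}\right).
$$
The remaining task is to replace these $p$-dependent constants by the clean values $0.5$ and $0.0222$ appearing in the statement. Since $p\to\infty$ as $n\to\infty$ (Proposition~\ref{p=log} gives $p\sim\log n$), the correction terms tend to $0$, so $c\to b=0.5$ from below and $d\to a=0.0221$ from above. In particular, for all large enough $n$ one has $c<0.5$ and $d<0.0222$.

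Finally I would use monotonicity in the constants to pass to the asserted form. Because $c<0.5$ we have $1-\frac{0.5}{\log\log n}<1-\frac{c}{\log\log n}$, hence $\log n\left(1-\frac{0.5}{\log\log n}\right)<\log n\left(1-\frac{c}{\log\log n}\right)<p$; and because $d<0.0222$ we have $1+\frac{d}{\log\log n}<1+\frac{0.0222}{\log\log n}$, hence $p<\log n\left(1+\frac{d}{\log\log n}\right)<\log n\left(1+\frac{0.0222}{\log\log n}\right)$. This gives precisely the claimed bounds. The only genuinely delicate point is the bookkeeping of thresholds: one must check that the ``large enough $n$'' coming from the upper estimate in Proposition~\ref{p(1+e)<log an<p(1+e)} is compatible with, and can be taken to dominate, the threshold needed to guarantee $d<0.0222$. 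Both are governed by $p$ being large, which holds eventually since $p\to\infty$, so a common threshold exists and the corollary follows.
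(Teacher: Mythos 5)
Your argument is correct and is exactly the route the paper takes: the corollary is stated there as an immediate consequence of Lemma~\ref{x(1+E)<y<x(1+E) inverse} applied to the two-sided bound of Proposition~\ref{p(1+e)<log an<p(1+e)} with $x=p$, $y=\log n$, $a=0.0221$, $b=0.5$, and no further details are given. Your explicit check that the admissible constants satisfy $c<0.5$ and $d<0.0222$ for large $p$ (since $p\to\infty$ with $n$) simply supplies the bookkeeping the paper leaves implicit.
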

In Sections 18.3 and 18.4 of \cite{HW}, it was proved that
$$
\frac{6}{\pi^2}<\frac{\sigma(n)\varphi(n)}{n^2}<1,
$$
and
$$
\varliminf_{n\rightarrow\infty}\frac{\sigma(n)\varphi(n)}{n^2}=\frac{6}{\pi^2},\qquad\qquad\varlimsup_{n\rightarrow\infty}\frac{\sigma(n)\varphi(n)}{n^2}=1.
$$
\begin{prop}\label{sigma/n>(1-c)n/phi}
For $n=2^{k_2}\cdots q^{k_q}\cdots p\in SA$, we have
$$
\frac{\sigma(n)}{n}>\left\{1-\varepsilon(p)\right\}\frac{n}{\varphi(n)},
$$
where
$$
\varepsilon(p)=\frac{1}{\log p}\left(1+\frac{1.5}{\log p}\right).
$$
\end{prop}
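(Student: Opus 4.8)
The plan is to reduce the claim to a single, transparent estimate on a product taken over the prime divisors of $n$. Since both $\sigma(\cdot)/(\cdot)$ and $(\cdot)/\varphi(\cdot)$ are multiplicative, for $n=\prod_{q\mid n}q^{k_q}$ one has $\sigma(n)/n=\prod_{q\mid n}\frac{1-q^{-(k_q+1)}}{1-q^{-1}}$ and $n/\varphi(n)=\prod_{q\mid n}(1-q^{-1})^{-1}$, so the quotient of the two sides of the proposition collapses to
$$
\frac{\sigma(n)/n}{n/\varphi(n)}=\prod_{q\mid n}\left(1-\frac{1}{q^{k_q+1}}\right).
$$
Hence it suffices to show that this product exceeds $1-\varepsilon(p)$. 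First I would pass from the product to a sum via the elementary Weierstrass inequality $\prod_i(1-a_i)\geq 1-\sum_i a_i$ (valid for $a_i\in[0,1]$), applied with $a_q=q^{-(k_q+1)}$, thereby reducing the goal to proving $\sum_{q\mid n}q^{-(k_q+1)}<\varepsilon(p)$.

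The crux is a uniform lower bound on each exponent, which comes from Theorem \ref{log p/log q<kq}: there $k_q\geq\lfloor\log p/\log q\rfloor$, so $k_q+1>\log p/\log q$ and therefore $q^{k_q+1}>q^{\log p/\log q}=p$ for every prime $q\mid n$. This gives the key pointwise estimate $q^{-(k_q+1)}<1/p$. Because a superabundant number has as its prime divisors exactly the initial segment of primes $2,3,\dots,p$, the sum contains precisely $\pi(p)$ terms, whence $\sum_{q\mid n}q^{-(k_q+1)}<\pi(p)/p$.

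To finish I would invoke Dusart's upper bound from Lemma \ref{Dusart1,2}, namely $\pi(p)<\frac{p}{\log p}\left(1+\frac{1.2762}{\log p}\right)$, so that $\pi(p)/p<\frac{1}{\log p}\left(1+\frac{1.2762}{\log p}\right)<\frac{1}{\log p}\left(1+\frac{1.5}{\log p}\right)=\varepsilon(p)$, using $1.2762<1.5$. Chaining the three inequalities yields $\frac{\sigma(n)/n}{n/\varphi(n)}>1-\varepsilon(p)$, which is exactly the asserted bound after multiplying through by $n/\varphi(n)$.

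I expect the only genuinely delicate point to be the step $q^{k_q+1}>p$: once one notices that the exponent bound of Theorem \ref{log p/log q<kq} forces every factor of the product to be at least $1-1/p$, the remainder is routine counting, and the slack between $1.2762$ and the allotted $1.5$ leaves comfortable room. I would also note that for small $p$ the right-hand coefficient $1-\varepsilon(p)$ is negative, making the statement trivial there, so no separate ``large $n$'' hypothesis is needed.
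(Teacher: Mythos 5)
Your proof is correct and follows essentially the same route as the paper: both reduce the claim to showing $\prod_{q\le p}\bigl(1-q^{-(k_q+1)}\bigr)>1-\varepsilon(p)$, bound each exponent via Theorem \ref{log p/log q<kq} so that every term is controlled by $1/p$ (the paper uses $1/(p-1)$), count $\pi(p)$ factors, and finish with Dusart's bound from Lemma \ref{Dusart1,2}. The only difference is that you apply the Weierstrass inequality $\prod(1-a_i)\ge 1-\sum a_i$ directly where the paper takes logarithms via $\log(1-1/t)>-1/(t-1)$ and then exponentiates; your variant has the small advantage of working for all $p$ at once, whereas the paper's constants force the restriction $p\ge 23$ and a separate computational check for smaller $p$.
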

\begin{proof}
It is enough to show that
$$
\prod_{q\leq p}\left(1-\frac{1}{q^{k_q+1}}\right)>1-\frac{1}{\log p}\left(1+\frac{1.5}{\log p}\right).
$$
Hence, using logarithmic inequality
$$
\log\left(1-\frac1t\right)>-\frac{1}{t-1},\qquad(t>1),
$$
and Theorem \ref{log p/log q<kq} and Lemma \ref{Dusart1,2}, we obtain
\begin{align*}
\log\prod_{q\leq p}\left(1-\frac{1}{q^{k_q+1}}\right)=&\sum_{q\leq p}\log\left(1-\frac{1}{q^{k_q+1}}\right)>-\sum_{q\leq p}\frac{1}{q^{k_q+1}-1}\\
>&-\sum_{q\leq p}\frac{1}{q^{{\log p/\log q}}-1}=-\sum_{q\leq p}\frac{1}{p-1}\\
=&-\frac{\pi(p)}{p-1}>-\frac{1}{p-1}\frac{p}{\log p}\left(1+\frac{1.2762}{\log p}\right)\\
>&-\frac{1}{\log p}\left(1+\frac{1.5}{\log p}\right),\qquad(p\geq23).
\end{align*}
Therefore, taking the exponential of both sides and using $e^{-x}>1-x$ we get
$$
\prod_{q\leq p}\left(1-\frac{1}{q^{k_q+1}}\right)>1-\frac{1}{\log p}\left(1+\frac{1.5}{\log p}\right),\qquad(p\geq23).
$$
For $p<23$, we use computation.
\end{proof}

\begin{prop}\label{|f(n)-ey|<1/loglogn2}
Let $n=2^{k}\cdots p\in SA$. Then
$$
\lim_{n\rightarrow\infty}\frac{\sigma(n)}{n\log\log n}=e^\gamma.
$$
More precisely,
$$
\frac{\sigma(n)}{n\log\log n}<e^\gamma\left(1+\frac{0.3639\ldots}{(\log\log n)^2}\right),\qquad(n\geq3)
$$
and for large enough $n\in SA$
\begin{equation}\label{sigma n/(nloglogn)>}
\frac{\sigma(n)}{n\log\log n}>e^\gamma\left(1-\frac{2}{\log\log n}\right).
\end{equation}

\end{prop}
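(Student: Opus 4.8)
The plan is to handle the two explicit inequalities separately and then obtain the limit by squeezing. The upper bound requires essentially no new work. Writing out
$$
e^\gamma\left(1+\frac{0.3639\ldots}{(\log\log n)^2}\right)=e^\gamma+\frac{0.6482\ldots}{(\log\log n)^2}
$$
(note that $0.6482\ldots/e^\gamma=0.3639\ldots$), one sees that the claimed upper bound is nothing but Robin's inequality (\ref{rob2}), which already holds for all $n\geq3$ and not merely for $n\in SA$.

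The real content is the lower bound, and here I would assemble a chain of inequalities. Starting from Proposition \ref{sigma/n>(1-c)n/phi}, I would use the fact that a superabundant number $n=2^{k_2}\cdots p$ is divisible by every prime $q\leq p$, so that $n/\varphi(n)=\prod_{q\mid n}q/(q-1)=\prod_{q\leq p}q/(q-1)$. Feeding in the lower bound of Dusart's Lemma \ref{prod1/(1-1/p)>e...} then gives
$$
\frac{\sigma(n)}{n}>\left\{1-\varepsilon(p)\right\}\prod_{q\leq p}\frac{q}{q-1}>e^\gamma\left\{1-\varepsilon(p)\right\}(\log p)\left(1-\frac{0.2}{\log^2p}\right),
$$
where $\varepsilon(p)=\frac{1}{\log p}\left(1+\frac{1.5}{\log p}\right)$ as before.

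Dividing by $\log\log n$ reduces the problem to comparing $\log p$ with $\log\log n$. For this I would invoke Corollary \ref{log an<p<log an}, specifically $p>\log n\left(1-0.5/\log\log n\right)$, which after taking logarithms and applying (\ref{log(1+t)}) yields $\frac{\log p}{\log\log n}>1-O\!\left(1/(\log\log n)^2\right)$. Since also $\log p\sim\log\log n$, the factor $1-\varepsilon(p)$ supplies the dominant correction $-1/\log\log n$, while the Dusart term $0.2/\log^2p$ and the discrepancy between $\log p$ and $\log\log n$ contribute only at order $O\!\left(1/(\log\log n)^2\right)$. Collecting everything,
$$
f(n)=\frac{\sigma(n)}{n\log\log n}>e^\gamma\left(1-\frac{1}{\log\log n}-O\!\left(\frac{1}{(\log\log n)^2}\right)\right),
$$
and for $n$ large the bracket exceeds $1-2/\log\log n$, which is the assertion.

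The main obstacle is purely bookkeeping: one must check that no correction larger than $-1/\log\log n$ enters at first order, so that the comfortable slack between the true leading coefficient $1$ and the claimed $2$ absorbs all the $O(1/(\log\log n)^2)$ terms uniformly for every sufficiently large $n\in SA$. Once both explicit inequalities are established, the limit $\lim_{n\to\infty}f(n)=e^\gamma$ along $SA$ follows immediately by squeezing, since both correction terms tend to $0$; this is of course consistent with Gronwall's theorem, the superabundant numbers being precisely those realizing the successive maxima of $\sigma(n)/n$.
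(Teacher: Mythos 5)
Your proposal is correct and follows essentially the same route as the paper: the upper bound is read off from Robin's inequality (\ref{rob2}), and the lower bound is obtained by combining Proposition \ref{sigma/n>(1-c)n/phi}, Lemma \ref{prod1/(1-1/p)>e...} and Corollary \ref{log an<p<log an}, with the limit following by squeezing. Your extra remark that a superabundant $n$ with largest prime factor $p$ is divisible by every prime $q\leq p$ (so that $n/\varphi(n)=\prod_{q\leq p}q/(q-1)$) is a detail the paper leaves implicit, but it is the same argument.
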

\begin{proof}
The first inequality is exactly (\ref{rob2}), where $0.3639\ldots=(0.6482\ldots)e^{-\gamma}$.

By using Proposition \ref{sigma/n>(1-c)n/phi}, Lemma \ref{prod1/(1-1/p)>e...}  and Corollary \ref{log an<p<log an}, we have for large enough $n$
\begin{align*}
\frac{\sigma(n)}{n}>&\left\{1-\varepsilon(p)\right\}e^{\gamma}(\log p)\left(1-\frac{0.2}{\log^2p}\right)\\
                       =& e^{\gamma}(\log p)\left\{1-\frac{1}{\log p}\left(1+\frac{1.5}{\log p}\right)\right\}\left(1-\frac{0.2}{\log^2p}\right)\\
>& e^{\gamma}(\log\log n)\left(1-\frac{2}{\log\log n}\right).
\end{align*}

\end{proof}

\begin{lem}\label{sigma n-n}
Let
$$
g(n)=\sigma(n)-n.
$$
Then $g$ is increasing for $a_n\in SA$.
\end{lem}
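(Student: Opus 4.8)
The plan is to factor $g$ as a product of two quantities that both increase along the superabundant sequence, and then to use nothing more than the defining inequality of $SA$. Write
$$
g(n)=\sigma(n)-n=n\left(\frac{\sigma(n)}{n}-1\right).
$$
Let $n<n'$ be two consecutive superabundant numbers (these are the consecutive terms $a_k<a_{k+1}$ in the increasing enumeration of $SA$); the goal is to show $g(n)\leq g(n')$, and in fact I expect to obtain the strict inequality $g(n)<g(n')$.

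First I would invoke the definition of a superabundant number directly: since $n'\in SA$, we have $\sigma(n')/n'>\sigma(m)/m$ for every $m<n'$, and taking $m=n$ yields $\sigma(n')/n'>\sigma(n)/n$. Next I would note that for every integer $m>1$ one has $\sigma(m)\geq m+1>m$, so both abundancy ratios exceed $1$; hence the two factors $\sigma(n)/n-1$ and $\sigma(n')/n'-1$ are strictly positive.

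The conclusion then follows from a two-step comparison that keeps the two monotonicities separate. Because $n'>n>0$ and $\sigma(n')/n'-1>0$, enlarging the first factor gives
$$
g(n')=n'\left(\frac{\sigma(n')}{n'}-1\right)>n\left(\frac{\sigma(n')}{n'}-1\right).
$$
Because $\sigma(n')/n'-1>\sigma(n)/n-1$ and $n>0$, enlarging the second factor gives
$$
n\left(\frac{\sigma(n')}{n'}-1\right)>n\left(\frac{\sigma(n)}{n}-1\right)=g(n).
$$
Chaining these two inequalities produces $g(n')>g(n)$, which is stronger than the asserted monotonicity $g(a_k)\leq g(a_{k+1})$.

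I do not anticipate a genuine obstacle here: the statement is an immediate consequence of the defining inequality for $SA$ combined with the elementary bound $\sigma(m)>m$. The only point requiring care is the bookkeeping of the two simultaneous increases, since both the magnitude $n$ and the ``abundancy minus one'' grow as one moves up the $SA$ sequence; the product factorization of $g$ is precisely what makes this transparent and avoids any need for the analytic estimates developed in the earlier lemmas.
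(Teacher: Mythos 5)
Your proof is correct and is essentially the paper's own argument: both rest solely on the defining inequality $\sigma(n')/n'>\sigma(n)/n$ together with the positivity $\sigma(m)>m$, carried out as a two-step comparison (the paper rewrites the ratio inequality as $\sigma(n')/\sigma(n)>n'/n$ and multiplies through by $\sigma(n)>n$, which is the same bookkeeping as your factorization $g(n)=n\left(\sigma(n)/n-1\right)$). No gap.
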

\begin{proof}
Let $a_n, a_{n+1}\in SA$. By definition of superabundant numbers
$$
\frac{\sigma(a_{n+1})}{a_{n+1}}>\frac{\sigma(a_n)}{a_n}>1,\qquad(n>1).
$$
Therefore,
\begin{align*}
\frac{\sigma(a_{n+1})}{\sigma(a_n)}>\frac{a_{n+1}}{a_n}\Rightarrow&\frac{\sigma(a_{n+1})}{\sigma(a_n)}-1>\frac{a_{n+1}}{a_n}-1\\
\Rightarrow&\sigma(a_n)\left(\frac{\sigma(a_{n+1})}{\sigma(a_n)}-1\right)>a_n\left(\frac{a_{n+1}}{a_n}-1\right)\\
\Rightarrow&\sigma(a_{n+1})-a_{n+1}>\sigma(a_{n})-a_n.
\end{align*}

\end{proof}

\begin{prop}
Let
$$
g(n)=\frac{\sigma(n)^{\sigma(n)}}{n^n}.
$$
Then $g$ is increasing for $a_n\in SA$.
\end{prop}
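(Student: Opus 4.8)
The plan is to take logarithms and reduce the claim to the monotonicity of a single real expression along the superabundant sequence. Since $g(n)=\sigma(n)^{\sigma(n)}/n^n>0$ and $\log$ is increasing, it suffices to prove that
\[
\Phi(n):=\log g(n)=\sigma(n)\log\sigma(n)-n\log n
\]
is increasing for $a_n\in SA$; that is, $\Phi(a_{n+1})\geq\Phi(a_n)$ for consecutive superabundant numbers $a_n<a_{n+1}$.

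The key step is an algebraic factorization that isolates exactly the two quantities which increase along $SA$. Writing $r=r(n)=\sigma(n)/n$ for the abundancy, so that $\sigma(n)=rn$ and $\log\sigma(n)=\log r+\log n$, I would compute
\[
\Phi(n)=rn(\log r+\log n)-n\log n=n\bigl[r\log r+(r-1)\log n\bigr].
\]
Thus $\Phi(n)=n\,B(n,r)$ with $B(m,s):=s\log s+(s-1)\log m$. The advantage of this form is that the prefactor $n$ and both summands in $B$ are controlled directly by the defining inequality of superabundant numbers.

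Then I would run the monotonicity argument. For consecutive superabundant numbers the definition gives $r_{n+1}=\sigma(a_{n+1})/a_{n+1}>\sigma(a_n)/a_n=r_n$, while trivially $a_{n+1}>a_n$. For $s\geq1$ and $m\geq1$ one has $B(m,s)\geq0$ (both $s\log s$ and $(s-1)\log m$ are nonnegative), $B$ is strictly increasing in $s$ (its $s$-derivative is $\log s+1+\log m>0$), and $B$ is nondecreasing in $m$ (its $m$-derivative is $(s-1)/m\geq0$). Hence $B(a_{n+1},r_{n+1})>B(a_n,r_n)\geq0$, and combining with $a_{n+1}>a_n>0$ yields
\[
\Phi(a_{n+1})=a_{n+1}B(a_{n+1},r_{n+1})\geq a_{n+1}B(a_n,r_n)\geq a_nB(a_n,r_n)=\Phi(a_n).
\]

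I do not expect a genuine obstacle here; the only points requiring care are the positivity of the bracket $B(a_n,r_n)$, which is what allows enlarging the prefactor from $a_n$ to $a_{n+1}$ while preserving the inequality, and the degenerate first term $a_1=1$, where $r=1$ and $\log1=0$ force $\Phi(1)=0$. This initial step is handled by direct computation, since $\Phi(2)=3\log3-2\log2>0=\Phi(1)$. An alternative, slightly less self-contained route would invoke Lemma \ref{sigma n-n} in the form $\sigma(a_{n+1})-\sigma(a_n)>a_{n+1}-a_n$ together with the mean value theorem applied to $x\mapsto x\log x$; however that approach requires comparing the two intermediate mean-value points, which the factorization above avoids entirely.
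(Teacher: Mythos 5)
Your proof is correct. It is in essence the logarithmic form of the paper's argument: the paper factors $g(n)=\left(\sigma(n)/n\right)^{\sigma(n)}\cdot n^{\sigma(n)-n}$ and runs a two-step multiplicative chain, which is exactly the exponential of your identity $\log g(n)=n\bigl[r\log r+(r-1)\log n\bigr]$. The genuine difference is in which monotone quantities carry the argument. The paper compares the exponents $\sigma(a_{n+1})-a_{n+1}$ and $\sigma(a_n)-a_n$, and therefore must invoke Lemma \ref{sigma n-n} (that $\sigma(n)-n$ is increasing on $SA$) as an auxiliary input; you instead package everything into the two-variable function $B(m,s)=s\log s+(s-1)\log m$ and use only the facts that $r=\sigma(n)/n$ increases along $SA$ (the definition), that $a_{n+1}>a_n$, and that $B\geq0$ on $[1,\infty)^2$ so the prefactor can be enlarged. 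Your intermediate comparison point $a_{n+1}B(a_n,r_n)$, i.e.\ $\left(r_n^{\,r_n}a_n^{\,r_n-1}\right)^{a_{n+1}}$, is not the same as the paper's $\left(r_n\right)^{\sigma(a_{n+1})}(a_{n+1})^{\sigma(a_n)-a_n}$, so the chains really are different. What your route buys is self-containedness (no appeal to Lemma \ref{sigma n-n}) and a clean handling of the degenerate first term $a_1=1$; what the paper's route buys is brevity, since Lemma \ref{sigma n-n} is already established there and the whole proof is then a four-line display. Both establish the strict inequality, which is more than the paper's (non-strict) definition of increasing requires.
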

\begin{proof}
Indeed, from definition of SA numbers and Lemma \ref{sigma n-n}
\begin{align*}
\frac{\sigma(a_{n+1})^{\sigma(a_{n+1})}}{(a_{n+1})^{a_{n+1}}}=&\left(\frac{\sigma(a_{n+1})}{a_{n+1}}\right)^{\sigma(a_{n+1})}(a_{n+1})^{\sigma(a_{n+1})-a_{n+1}}\\
                                 >&\left(\frac{\sigma(a_{n})}{a_n}\right)^{\sigma(a_{n+1})}(a_{n+1})^{\sigma(a_n)-a_n}\\
                                 >&\left(\frac{\sigma(a_{n})}{a_n}\right)^{\sigma(a_{n})}(a_{n})^{\sigma(a_n)-a_n}\\
                                 =&\frac{\sigma(a_{n})^{\sigma(a_{n})}}{(a_{n})^{a_{n}}}.
\end{align*}

\end{proof}

Now we give a stronger lemma.
\begin{thm}\label{sigma n-nloglogn increasing}
Let
$$
g(n)=\sigma(n)-n\log\log n.
$$
Then $g$ is increasing for large enough $n\in SA$.
\end{thm}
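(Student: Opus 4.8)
The plan is to compare two consecutive superabundant numbers $x=a_n<y=a_{n+1}$ and show that $g(y)>g(x)$, that is,
$$
\sigma(y)-\sigma(x)>y\log\log y-x\log\log x
$$
for all large $n$. Writing $h(t)=\log\log t$ as in Lemma \ref{x<y<cx hx}, the key idea is to bound the right-hand side from above and the left-hand side from below by quantities that are \emph{both} proportional to $(y-x)$, so that the factor $(y-x)$ cancels and the whole estimate collapses to an inequality involving $x$ alone.

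For the right-hand side I would first invoke Proposition \ref{an+1<=2an} to get $x<y\le 2x$, and then apply Lemma \ref{x<y<cx hx} with $c=2$. Since the quotient $g(y)=\frac{yh(y)-xh(x)}{(y-x)h(x)}$ is increasing in $y$, we obtain $g(y)\le g(2x)$, and a direct computation gives $g(2x)=\frac{2h(2x)}{h(x)}-1$. Inequality (\ref{x<y<cx loglog x}) with $c=2$ then yields $g(2x)<1+\frac{2\log 2}{\log x\,\log\log x}$, so that
$$
yh(y)-xh(x)=(y-x)h(x)\,g(y)<(y-x)\left(\log\log x+\frac{2\log 2}{\log x}\right).
$$

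For the left-hand side, the defining inequality of $SA$ gives $\sigma(y)/y>\sigma(x)/x$, hence $\sigma(y)>(y/x)\sigma(x)$ and therefore
$$
\sigma(y)-\sigma(x)>(y-x)\frac{\sigma(x)}{x}.
$$
Now I would insert the lower bound (\ref{sigma n/(nloglogn)>}) of Proposition \ref{|f(n)-ey|<1/loglogn2}, valid for large $x\in SA$, namely $\sigma(x)/x>e^\gamma(\log\log x-2)$. Combining the last two displays, it suffices to establish
$$
e^\gamma(\log\log x-2)>\log\log x+\frac{2\log 2}{\log x},
$$
equivalently $(e^\gamma-1)\log\log x>2e^\gamma+\frac{2\log 2}{\log x}$.

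Since $e^\gamma-1>0$ while the right-hand side stays bounded and $\log\log x\to\infty$, this last inequality holds for all sufficiently large $x$, which finishes the argument. The one genuinely delicate point is that the gap $y-x$ between consecutive superabundant numbers can be as small as $1$, so a crude comparison of $\sigma(y)-\sigma(x)$ with $yh(y)-xh(x)$ would be hopeless; the scheme works precisely because both bounds carry the common factor $(y-x)$, reducing the problem to an inequality in $x$ that is won by the strict inequality $e^\gamma>1$ together with the divergence of $\log\log x$.
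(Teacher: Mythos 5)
Your proof is correct and follows essentially the same route as the paper: both arguments compare consecutive superabundant $n<n'\le 2n$ via Proposition \ref{an+1<=2an}, bound the increment of $n\log\log n$ using Lemma \ref{x<y<cx hx} and inequality (\ref{x<y<cx loglog x}) with $c=2$, use superabundance to get $\sigma(n')-\sigma(n)>(n'-n)\sigma(n)/n$, and close with the lower bound (\ref{sigma n/(nloglogn)>}) from Proposition \ref{|f(n)-ey|<1/loglogn2}. The only cosmetic difference is that the paper inserts the fixed intermediate constant $\log\log 12/\log\log 6<e^\gamma$ and phrases the comparison multiplicatively, whereas you keep the explicit error term and let the divergence of $\log\log x$ finish the estimate.
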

\begin{proof}
Let $n,\, n'$ be two consecutive superabundant numbers. By Lemma \ref{x<y<cx hx}, Proposition \ref{an+1<=2an} and inequality (\ref{x<y<cx loglog x}), with $c=2$, $x=n$, $y=n'$,  we have
\begin{align*}
\frac{1}{n'/n-1}\left(\frac{n'}{n}\frac{\log\log n'}{\log\log n}-1\right)\leq&\ 2\,\frac{\log\log (2n)}{\log\log n}-1\\
<&1+2\frac{\log 2}{\log n\log\log n}\\
<&\frac{\log\log12}{\log\log6},\qquad(n\geq24).
\end{align*}
This gives
\begin{equation}\label{loglog6/loglog12 2}
\frac{n'}{n}-1>\frac{\log\log6}{\log\log12}\left(\frac{n'}{n}\frac{\log\log n'}{\log\log n}-1\right).
\end{equation}
By definition of SA numbers
$$
\frac{\sigma(n')}{\sigma(n)}>\frac{n'}{n}.
$$
Hence, via (\ref{loglog6/loglog12 2}) we derive
\begin{align}\label{sigma-1>}
\frac{\sigma(n')}{\sigma(n)}-1>&\frac{n'}{n}-1\nonumber\\
=&\frac{\log\log6}{\log\log12}\frac{n'\log\log n'}{n\log\log n}-\frac{\log\log6}{\log\log12}\frac{n'\log\log n'}{n\log\log n}+\frac{n'}{n}-1\nonumber\\
>&\frac{\log\log6}{\log\log12}\left(\frac{n'\log\log n'}{n\log\log n}-1\right).\\\nonumber
\end{align}
On the other hand since $\frac{\log\log12}{\log\log6}<1.56077<e^\gamma$, by Proposition \ref{|f(n)-ey|<1/loglogn2}, for large enough $n$

\begin{equation}\label{sigma>loglog12 6}
\sigma(n)>\frac{\log\log12}{\log\log6}(n\log\log n).
\end{equation}
Multiplying both sides of (\ref{sigma-1>}) and (\ref{sigma>loglog12 6}), we have
$$
\sigma(n')-\sigma(n)>n'\log\log n'-n\log\log n.
$$
Therefore,
$$
\sigma(n')-n'\log\log n'>\sigma(n)-n\log\log n.
$$
\end{proof}

\begin{prop}
Let
$$
g(n)=\frac{\sigma(n)^{\sigma(n)}}{(n\log\log n)^{n\log\log n}}.
$$
Then $g$ is increasing for large enough $n\in SA$.
\end{prop}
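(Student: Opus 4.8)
The plan is to reduce the multiplicative statement to the additive one already proved in Theorem~\ref{sigma n-nloglogn increasing} by passing to logarithms, rather than trying to copy verbatim the three-step comparison used for $\sigma(n)^{\sigma(n)}/n^n$. Writing $\phi(x)=x\log x$, for $n\in SA$ one has $\log g(n)=\sigma(n)\log\sigma(n)-(n\log\log n)\log(n\log\log n)=\phi(\sigma(n))-\phi(n\log\log n)$. Thus, for two consecutive superabundant numbers $n<n'$, it suffices to show $\phi(\sigma(n'))-\phi(\sigma(n))>\phi(n'\log\log n')-\phi(n\log\log n)$; that is, the increment of $\phi$ across the two $\sigma$-values must dominate its increment across the two values of $n\log\log n$.

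First I would record the three ingredients, writing $S=\sigma(n)$, $S'=\sigma(n')$, $L=n\log\log n$, $L'=n'\log\log n'$. Since $\sigma(n)/n$ is strictly increasing on $SA$ and $n'>n$, both $\sigma$ and $n\log\log n$ are increasing along $SA$, so $\Delta S:=S'-S>0$ and $\Delta L:=L'-L>0$. Theorem~\ref{sigma n-nloglogn increasing} asserts that $\sigma(n)-n\log\log n$ is increasing for large $n\in SA$, which rearranges to exactly $\Delta S>\Delta L$. Finally, by Proposition~\ref{|f(n)-ey|<1/loglogn2} we have $f(n)\to e^\gamma>1$, hence $S=\sigma(n)>n\log\log n=L$ (and likewise $S'>L'$) for all large enough $n\in SA$.

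The comparison is then a pure convexity fact about $\phi$. Since $\phi'(x)=1+\log x$ is increasing, the increment function $\Psi(a,w):=\phi(a+w)-\phi(a)=\int_a^{a+w}(1+\log t)\,dt$ is increasing in the base $a$ (because $\partial_a\Psi=\phi'(a+w)-\phi'(a)>0$) and in the width $w$ (because $\partial_w\Psi=\phi'(a+w)>0$ for $a+w>1/e$). Applying this with $(a,w)=(L,\Delta L)$ and $(a,w)=(S,\Delta S)$ and using $S\ge L$ together with $\Delta S>\Delta L$ gives $\Psi(S,\Delta S)\ge\Psi(L,\Delta S)\ge\Psi(L,\Delta L)$, with strict inequality in the last step. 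This reads $\phi(S')-\phi(S)>\phi(L')-\phi(L)$, i.e. $\log g(n')>\log g(n)$, so $g$ is increasing for large enough $n\in SA$.

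The one point deserving care, and the reason I avoid the earlier multiplicative scheme, is that that scheme would require the factor $\sigma(n)/(n\log\log n)=f(n)$ to be monotone along $SA$; but $f$ oscillates on $SA$ (indeed its records among $SA$ are precisely the extremely abundant numbers), so the analogue of the first replacement step breaks down. Passing to $\log g$ converts the problem into comparing two increments of the single convex function $\phi(x)=x\log x$, for which only the ordering $S>L$ of the starting points and the strict domination $\Delta S>\Delta L$ of the widths are needed; both are supplied directly by Proposition~\ref{|f(n)-ey|<1/loglogn2} and Theorem~\ref{sigma n-nloglogn increasing}, so no further control on the ratio $n'/n$ (beyond what is already built into Theorem~\ref{sigma n-nloglogn increasing}) is required.
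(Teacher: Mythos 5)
Your proof is correct, but it takes a genuinely different route from the paper's. The paper argues multiplicatively: it factors $g(n')/g(n)$ as
\[
\left(\frac{\sigma(n')}{\sigma(n)}\right)^{\sigma(n')}\left(\frac{n\log\log n}{n'\log\log n'}\right)^{n'\log\log n'}\cdot\frac{\sigma(n)^{\sigma(n')-\sigma(n)}}{(n\log\log n)^{n'\log\log n'-n\log\log n}},
\]
handles the last factor by Theorem~\ref{sigma n-nloglogn increasing}, lower-bounds the first two by $\bigl((n'/n)^{1/2}\log\log n/\log\log n'\bigr)^{n'\log\log n'}$ using the superabundance inequality $\sigma(n')/\sigma(n)>n'/n$ together with the bound $\sigma(n')>\tfrac32\,n'\log\log n'$ from Proposition~\ref{|f(n)-ey|<1/loglogn2}, and finishes with Lemma~\ref{sqrt>loglog}. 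You instead pass to $\log g(n)=\phi(\sigma(n))-\phi(n\log\log n)$ with $\phi(x)=x\log x$ and reduce everything to the monotonicity of the increment $\Psi(a,w)=\phi(a+w)-\phi(a)$ in both arguments, which is a clean convexity statement; the only inputs are $\Delta S>\Delta L$ (again Theorem~\ref{sigma n-nloglogn increasing}) and $S\geq L$, for which $f(n)>1$ eventually suffices. Your version is leaner: it dispenses with Lemma~\ref{sqrt>loglog}, with the constant $\tfrac32$, and with any direct appeal to the definition of SA beyond what is already packaged inside Theorem~\ref{sigma n-nloglogn increasing} (and it would prove the same monotonicity for any pair of increasing sequences $S_n>L_n$ with $S_n-L_n$ increasing). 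Your closing remark about why the naive imitation of the $\sigma(n)^{\sigma(n)}/n^n$ argument fails (it would need $f$ monotone on SA, which is false) is accurate, though note the paper does not attempt that naive imitation; it uses the three-factor decomposition above instead.
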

\begin{proof}
By Proposition \ref{|f(n)-ey|<1/loglogn2} we have for large enough $n\in SA$

\begin{equation}\label{sigma an>3/2anloglog an}
\sigma(n)>\frac32n\log\log n.
\end{equation}
We show that for two consecutive superabundant $n,\, n'$
\begin{align*}
\frac{\sigma(n')^{\sigma(n')}}{(n'\log\log n')^{n'\log\log n'}}>\frac{\sigma(n)^{\sigma(n)}}{(n\log\log n)^{n\log\log n}}.
\end{align*}
Indeed,
\begin{align}
\frac{\sigma(n')^{\sigma(n')}}{\sigma(n)^{\sigma(n)}}\frac{(n\log\log n)^{n\log\log n}}{(n'\log\log n')^{n'\log\log n'}}=&\left(\frac{\sigma(n')}{\sigma(n)}\right)^{\sigma(n')}\left(\frac{n\log\log n}{n'\log\log n'}\right)^{n'\log\log n'}\label{a}\\
&\times\left\{\frac{\sigma(n)^{\sigma(n')-\sigma(n)}}{(n\log\log n)^{n'\log\log n'-n\log\log n}}\right\}.\nonumber
\end{align}
By Theorem \ref{sigma n-nloglogn increasing}, the term inside $\{\}$ is greater than 1. Moreover,\\
\begin{align*}
\left(\frac{\sigma(n')}{\sigma(n)}\right)^{\sigma(n')}\left(\frac{n\log\log n}{n'\log\log n'}\right)^{n'\log\log n'}>&\left(\frac{n'}{n}\right)^{\sigma(n')}\left(\frac{n\log\log n}{n'\log\log n'}\right)^{n'\log\log n'}\\
=&\left(\frac{n'}{n}\right)^{\sigma(n')-n'\log\log n'}\left(\frac{\log\log n}{\log\log n'}\right)^{n'\log\log n'}.
\end{align*}
However, due to (\ref{sigma an>3/2anloglog an}) the right-hand side of the equality is greater than

\begin{equation}\label{the right side of eq}
\left(\frac{n'}{n}\right)^{\frac12n'\log\log n'}\left(\frac{\log\log n}{\log\log n'}\right)^{n'\log\log n'}.
\end{equation}
Finally appealing to Lemma \ref{sqrt>loglog} we conclude that (\ref{the right side of eq}) is $>1$.
\end{proof}

\begin{prop}
Let $A=\{a_n\}$ be a sequence for which any prime factor of $a_n$ be a prime factor of $a_{n+1}$, and
$$
g(n)=\frac{n}{\varphi(n)},
$$
Then $g$ is increasing for $a_n\in A$.
\end{prop}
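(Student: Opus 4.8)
The plan is to exploit the fact that $g(n)=n/\varphi(n)$ depends only on the \emph{set} of distinct prime divisors of $n$, and not at all on the exponents or on the size of $n$. Indeed, from the product representation $\varphi(n)=n\prod_{p\mid n}(1-1/p)$ recalled in the Introduction, one has
$$
g(n)=\frac{n}{\varphi(n)}=\prod_{p\mid n}\left(1-\frac1p\right)^{-1}=\prod_{p\mid n}\frac{p}{p-1}.
$$
Writing $P(m)$ for the set of distinct prime divisors of an integer $m$, this says $g(m)=\prod_{p\in P(m)}\frac{p}{p-1}$, a finite product of factors each of which is strictly greater than $1$.

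First I would translate the hypothesis into a statement about prime supports. The assumption that every prime factor of $a_n$ is also a prime factor of $a_{n+1}$ is exactly the inclusion $P(a_n)\subseteq P(a_{n+1})$ for every index $n$. Then I would compare the two products directly, factoring out the primes common to both:
$$
g(a_{n+1})=\prod_{p\in P(a_{n+1})}\frac{p}{p-1}=\left(\prod_{p\in P(a_n)}\frac{p}{p-1}\right)\prod_{p\in P(a_{n+1})\setminus P(a_n)}\frac{p}{p-1}=g(a_n)\prod_{p\in P(a_{n+1})\setminus P(a_n)}\frac{p}{p-1}.
$$
Since every factor of the trailing product exceeds $1$ (and an empty product equals $1$ when the two prime supports coincide), that product is $\geq 1$, whence $g(a_{n+1})\geq g(a_n)$. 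As this holds for every $n$, the function $g$ is increasing on $A$ in the sense of the preceding definition.

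There is essentially no obstacle here: the entire content of the statement is the observation that $n/\varphi(n)$ is a monotone function of the prime support under set inclusion, and the hypothesis on $A$ guarantees precisely that this support can only grow as one passes from $a_n$ to $a_{n+1}$. The only point requiring a word of care is to emphasize that the actual integer values $a_n<a_{n+1}$ and the multiplicities of their prime factors are irrelevant — the argument uses nothing beyond the inclusion $P(a_n)\subseteq P(a_{n+1})$ together with the fact that each local factor $\frac{p}{p-1}$ is $>1$.
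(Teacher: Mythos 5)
Your proof is correct and rests on the same observation as the paper's: $n/\varphi(n)=\prod_{p\mid n}p/(p-1)$ depends only on the set of prime divisors, so the hypothesis $P(a_n)\subseteq P(a_{n+1})$ makes the ratio $g(a_{n+1})/g(a_n)$ a product of factors $p/(p-1)>1$ over the new primes. If anything, your version is slightly more careful than the paper's, whose displayed computation tacitly assumes the prime supports are initial segments $\{p_1,\dots,p_k\}$ and $\{p_1,\dots,p_{k+1}\}$ of the primes (true for the superabundant numbers it has in mind, but not for an arbitrary sequence satisfying the stated hypothesis), whereas your factorization over $P(a_{n+1})\setminus P(a_n)$ covers the general case.
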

\begin{proof}
If $p(a_{n+1})=p(a_n)$, it is clear. Let $p(a_{n+1})=p_{k+1}>p_k= p(a_n)$
\begin{align*}
\frac{a_{n+1}/\varphi(a_{n+1})}{a_{n}/\varphi(a_{n})}&=\frac{\prod_{j=1}^{k}(1-1/p_j)}{\prod_{j=1}^{k+1}(1-1/p_j)}\\
                                                   &=\frac{1}{1-1/p_{k+1}}>1
\end{align*}
\end{proof}

\begin{prop}
Let $p(a_{n+1})\geq p(a_n)$,
$$
g(n)=\frac{\sigma(n)}{\varphi(n)},
$$
then $g$ is increasing for $a_n\in SA$.
\end{prop}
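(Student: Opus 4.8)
The plan is to factor $g(n)=\sigma(n)/\varphi(n)$ as the product $(\sigma(n)/n)\cdot(n/\varphi(n))$ and show that each factor is a positive increasing sequence along $\{a_n\}$; the product of two positive increasing sequences is then increasing. The factor $\sigma(n)/n$ is immediate: since each $a_n$ is superabundant and $a_n<a_{n+1}$, the very definition of SA (applied with $m=a_n<a_{n+1}$) gives $\sigma(a_{n+1})/a_{n+1}>\sigma(a_n)/a_n$, a quotient which moreover always exceeds $1$.

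The main work is the factor $n/\varphi(n)$, and here I would reduce to the Proposition proved just above, which asserts that $n/\varphi(n)$ is increasing along any sequence in which every prime factor of $a_n$ is again a prime factor of $a_{n+1}$. To invoke it I must convert the hypothesis $p(a_{n+1})\geq p(a_n)$ into this nesting of prime divisors, and this is exactly where the structure of superabundant numbers enters: every SA number has the form $2^{k_2}3^{k_3}\cdots p^{k_p}$ in which all primes up to its largest prime factor $p$ actually occur (Alaoglu--Erd\H{o}s, \cite{Al}). Hence the set of prime divisors of an SA number $n$ is precisely $\{q\text{ prime}:q\leq p(n)\}$. Given $p(a_{n+1})\geq p(a_n)$ one gets $\{q:q\leq p(a_n)\}\subseteq\{q:q\leq p(a_{n+1})\}$, so every prime factor of $a_n$ divides $a_{n+1}$, the preceding Proposition applies, and $a_n/\varphi(a_n)\leq a_{n+1}/\varphi(a_{n+1})$, again a quantity exceeding $1$.

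Finally I would assemble the two monotonicities. Writing $u_n=\sigma(a_n)/a_n$ and $v_n=a_n/\varphi(a_n)$, both positive with $u_{n+1}\geq u_n$ and $v_{n+1}\geq v_n$, the chain $u_{n+1}v_{n+1}\geq u_n v_{n+1}\geq u_n v_n$ yields $g(a_{n+1})\geq g(a_n)$, which is the assertion.

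The only genuinely non-routine point is the structural fact that an SA number contains every prime below its largest prime factor. If one prefers not to quote it, I would verify it directly: were some prime $q<p(n)$ absent from $n$, then $m=nq/p(n)<n$ would satisfy $\sigma(m)/m>\sigma(n)/n$, since replacing the largest prime factor $p(n)$ by the smaller prime $q$ raises the corresponding local Euler factor, contradicting the superabundance of $n$. Everything else is bookkeeping with positive increasing sequences, so the hard part, such as it is, lies simply in recognizing that the weaker hypothesis $p(a_{n+1})\geq p(a_n)$ already forces full prime-factor containment once the SA structure is used.
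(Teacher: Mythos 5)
Your proposal is correct and is essentially the paper's own argument: the paper's one-line chain $\frac{\sigma(a_{n+1})}{\varphi(a_{n+1})}>\frac{a_{n+1}}{\varphi(a_{n+1})}\frac{\sigma(a_{n})}{a_{n}}\geq\frac{a_{n}}{\varphi(a_{n})}\frac{\sigma(a_{n})}{a_{n}}$ is exactly your factorization into the two positive increasing sequences $\sigma(n)/n$ (by the SA definition) and $n/\varphi(n)$ (by the preceding proposition together with the hypothesis $p(a_{n+1})\geq p(a_n)$ and the fact that an SA number is divisible by every prime up to its largest prime factor). Your extra verification of that structural fact is sound but not something the paper spells out.
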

\begin{proof}
We have
\begin{align*}
\frac{\sigma(a_{n+1})}{\varphi(a_{n+1})}>\frac{a_{n+1}}{a_{n}}\frac{\sigma(a_{n})}{\varphi(a_{n+1})}=\frac{a_{n+1}}{\varphi(a_{n+1})}\frac{\sigma(a_{n})}{a_{n}}\geq\frac{a_{n}}{\varphi(a_{n})}\frac{\sigma(a_{n})}{a_{n}}=\frac{\sigma(a_{n})}{\varphi(a_{n})}.
\end{align*}
\end{proof}
Let $\Psi(n)$ denote Dedekind's arithmetical function of $n$ which is defined by
$$
\Psi(n)=n\prod_{p\mid n}\left(1+\frac1p\right),\qquad \Psi(1)=1,
$$
where the product is taken over all primes $p$ dividing $n$.
\begin{prop}
If $p(a_{n+1})\geq p(a_n)$, then
\begin{align*}
\frac{\sigma(n)^{\Psi(n)}}{n^n}.
\end{align*}
is increasing for $a_n\in SA$.
\end{prop}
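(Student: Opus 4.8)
The plan is to imitate the proof given just above for the monotonicity of $\sigma(n)^{\sigma(n)}/n^n$, replacing the exponent $\sigma(n)$ by $\Psi(n)$ throughout. The starting point is the identity
$$
\frac{\sigma(n)^{\Psi(n)}}{n^n}=\left(\frac{\sigma(n)}{n}\right)^{\Psi(n)}n^{\Psi(n)-n},
$$
valid because $\Psi(n)>n$ for $n>1$, so the trailing power of $n$ is genuine. For two consecutive superabundant numbers $a_n<a_{n+1}$ I would then run the same three-step chain: first replace the base $\sigma(a_{n+1})/a_{n+1}$ by $\sigma(a_n)/a_n$ (allowed since the former exceeds the latter by the definition of SA and the exponent $\Psi(a_{n+1})$ is positive); next lower the power of $a_{n+1}$ from $\Psi(a_{n+1})-a_{n+1}$ to $\Psi(a_n)-a_n$; and finally lower the exponent of $\sigma(a_n)/a_n$ from $\Psi(a_{n+1})$ to $\Psi(a_n)$ while replacing the base $a_{n+1}$ by $a_n$ in the surviving power, recovering $\sigma(a_n)^{\Psi(a_n)}/a_n^{a_n}$.

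For the chain to produce strict inequalities I need two facts: that $\Psi(n)-n$ is increasing and that $\Psi(n)$ is increasing on the superabundant numbers under the standing hypothesis $p(a_{n+1})\geq p(a_n)$. Both reduce to showing that $R(n):=\Psi(n)/n=\prod_{p\mid n}(1+1/p)$ does not decrease, because $\Psi(a_{n+1})=a_{n+1}R(a_{n+1})$ and $\Psi(a_{n+1})-a_{n+1}=a_{n+1}\bigl(R(a_{n+1})-1\bigr)$, while $a_{n+1}>a_n>0$ and $R(n)>1$. This is exactly where the hypothesis enters: superabundant numbers have an initial segment of primes as prime support (they are written $2^{k_2}\cdots p$ with every prime below $p$ present), so $p(a_{n+1})\geq p(a_n)$ forces the prime divisors of $a_n$ to be contained in those of $a_{n+1}$, whence $R(a_{n+1})=\prod_{q\leq p(a_{n+1})}(1+1/q)\geq\prod_{q\leq p(a_n)}(1+1/q)=R(a_n)$. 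Granting this, the second step of the chain follows from $\Psi(a_{n+1})-a_{n+1}>\Psi(a_n)-a_n$ with $a_{n+1}>1$, and the third from $\Psi(a_{n+1})>\Psi(a_n)$ together with $\sigma(a_n)/a_n>1$, $a_{n+1}>a_n$ and $\Psi(a_n)-a_n>0$; note that each of these strict inequalities already follows from $a_{n+1}>a_n$ and $R(a_{n+1})\geq R(a_n)>1$, so only weak monotonicity of $R$ is needed.

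I expect the only real content, as opposed to the bookkeeping transcribed from the earlier proposition, to be precisely this containment of prime supports, i.e. turning $p(a_{n+1})\geq p(a_n)$ into $R(a_{n+1})\geq R(a_n)$. This also explains why the hypothesis is imposed in the statement and does not come for free from superabundance: without it the largest prime factor could drop, a factor $1+1/q$ could be lost from the product, and the monotonicity of $R$, hence of both $\Psi$ and $\Psi(n)-n$, would fail, breaking the argument.
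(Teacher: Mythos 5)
Your proposal is correct and follows essentially the same route as the paper: the identical decomposition $\sigma(n)^{\Psi(n)}/n^n=(\sigma(n)/n)^{\Psi(n)}n^{\Psi(n)-n}$ and the same three-step chain of inequalities. The only difference is that you actually justify the key fact $\Psi(a_{n+1})-a_{n+1}>\Psi(a_n)-a_n$ (via the containment of prime supports and monotonicity of $\Psi(n)/n$), which the paper simply asserts from the hypothesis $p(a_{n+1})\geq p(a_n)$ without proof.
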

\begin{proof}
Let $p(a_{n+1})\geq p(a_n)$. Then
$$
\Psi(a_{n+1})-a_{n+1}>\Psi(a_{n})-a_{n}.
$$
So that
\begin{align*}
\frac{\sigma(a_{n+1})^{\Psi(a_{n+1})}}{(a_{n+1})^{a_{n+1}}}=&\left(\frac{\sigma(a_{n+1})}{a_{n+1}}\right)^{\Psi(a_{n+1})}a_{n+1}^{\Psi(a_{n+1})-a_{n+1}}
                                                           >\left(\frac{\sigma(a_{n})}{a_{n}}\right)^{\Psi(a_{n+1})}a_{n+1}^{\Psi(a_{n+1})-a_{n+1}}\\
                                                           >&\left(\frac{\sigma(a_{n})}{a_{n}}\right)^{\Psi(a_{n})}a_{n+1}^{\Psi(a_{n+1})-a_{n+1}}
                                                           >\left(\frac{\sigma(a_{n})}{a_{n}}\right)^{\Psi(a_{n})}a_{n}^{\Psi(a_{n})-a_{n}}\\
                                                           =&\frac{\sigma(a_{n})^{\Psi(a_{n})}}{(a_{n})^{a_{n}}}.
\end{align*}

\end{proof}

\paragraph{\textbf{Colossally Abundant Numbers}}
A colossally abundant number is a positive integer $N$ for which there exists an $\varepsilon >0$ such that

\begin{equation}\label{CA defn}
\frac{\sigma(N)}{N^{1+\varepsilon}}\geq\frac{\sigma(n)}{n^{1+\varepsilon}},\qquad(n>1).
\end{equation}
It is easily seen that $CA\subset SA$.

For $\varepsilon>0$, we define $x=x_1$ and
\begin{align}\label{x(epsilon)}
F(x,1)=&\frac{\log(1+1/x)}{\log x}=\varepsilon,\\
F(x_k,k)=&\frac{\log(1+1/(x_k+\cdots+x_k^k))}{\log x}=\varepsilon\nonumber.
\end{align}
If $N=\prod_p p^{\nu_p(N)}$ is a CA number of parameter $\varepsilon$ and $p$ divides $N$ with $\nu_p(N) = k$, then applying (\ref{CA defn}) with $n = Np$ yields
$$
\varepsilon\geq F(p,k+1)\qquad\text{i.e. } p\geq x_{k+1}
$$
while, if $k >0$, applying (\ref{CA defn}) with $n = N/p$ yields
\begin{equation}\label{eps<F(p,k)}
\varepsilon\leq F(p,k),\qquad\text{i.e. }p\leq x_k
\end{equation}
Let $K$ be the largest integer such that $x_K\geq2$. Then from ((\ref{eps<F(p,k)}), for all $p$'s we have $2\leq p\leq x_k$ and
$$
k=\nu_p(N)\leq K
$$
Now define the set
$$
\mathcal{E}:=\{F(p,k): p \text{ is prime and } k\geq1\}.
$$
If $\varepsilon\notin\mathcal{E}$, then no $x_k$ is a prime and there exists a unique CA number $N=N(\varepsilon)$ of parameter $\varepsilon$; moreover, $N$ is given by either
\begin{equation}\label{colossally rep}
N_\varepsilon=\prod_{p=2}^x p^{k_p},\qquad(x_{k_p+1}<p<x_{k_p})
\end{equation}
or
\begin{equation}\label{CA N=prod p}
N=\prod_{k=1}^K\prod_{p<x_k}p.
\end{equation}
If $\varepsilon\in\mathcal{E}$, then some $x_k$ is prime, and it is highly probable that only one $x_k$ is prime. But from theorem of six exponentials it is only possible to show that at most two $x_k$'s are prime. Therefore, there are either two or four CA numbers of parameter $\varepsilon$, defined by
\begin{equation}\label{CA N=prod two or four}
N=\prod_{k=1}^K\prod_{\substack{p< x_k\\\text{or}\\p\leq x_k}}p.
\end{equation}
Here, if $x_k$ is a prime $p$ for some $k$, then $p$ may or may not be a factor in the inner product. (This can occur for at most two values of $k$.) In other words, if $x_{k-1} <p< x_k$, then the exponent $\nu_p(N)$ of $p$ in $N$ is $k$, while if $p = x_k$ , the exponent may be $k$ or $k-1$. In particular, if $N$ is the largest CA number of parameter $\varepsilon$, then
\begin{equation}\label{p(N)=p}
F(p,1)=\varepsilon\Rightarrow p(N)=p
\end{equation}
where $p(N)$ is the largest prime factor of $N$. Note that, since if $\varepsilon\notin\mathcal{E}$, then $x_k$ is not prime, formula (\ref{CA N=prod p}) gives the same value as (\ref{CA N=prod two or four}). Therefore, for any $\varepsilon$, formula (\ref{CA N=prod two or four}) gives all possible values of a CA number $N$ of parameter $\varepsilon$ (\cite{Caveney-2}).
For more details we refer to \cite{Al}, \cite{Erd}, \cite{Rob} and \cite{Caveney-2}.\\

It was proved by Robin (\cite{Rob}, Proposition 1) that the maximum order of the function $f$ defined by (\ref{f(n)}) is attained in a colossally abundant number.
\begin{xthm}[\cite{Rob}, Prop. 1]
If $3\leq N\leq n\leq N'$, where $N$ and $N'$ are two successive colossally abundant numbers, then
\begin{equation}\label{f(n)<max f(N), f(N')}
f(n)\leq\max\{f(N), f(N')\}.
\end{equation}

\end{xthm}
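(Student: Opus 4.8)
The plan is to exploit the defining property of colossally abundant numbers to dominate $f$ on the whole interval by an explicit smooth function of a \emph{real} variable, and then to show that this dominating function is \emph{quasi-convex} (valley-shaped), so that its maximum over $[N,N']$ is forced to the endpoints, where it agrees with $f$.

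First I would record the structural fact that two successive colossally abundant numbers share a common parameter: there is an $\varepsilon>0$ for which both $N$ and $N'$ are CA of parameter $\varepsilon$, so that
\[
\frac{\sigma(N)}{N^{1+\varepsilon}}=\frac{\sigma(N')}{N'^{1+\varepsilon}}=\max_{m\geq1}\frac{\sigma(m)}{m^{1+\varepsilon}}.
\]
This is the transition value of $\varepsilon$ at which the maximizer of $\sigma(m)/m^{1+\varepsilon}$ jumps from $N$ to $N'$; it follows from the description of CA numbers of a given parameter recalled before the statement (at a boundary value of $\varepsilon$ the two adjacent maximizers attain equal values). Applying the maximality at $N$ to an arbitrary $n$ gives
\[
\frac{\sigma(n)}{n}\leq\frac{\sigma(N)}{N}\left(\frac{n}{N}\right)^{\varepsilon},
\]
while the equality of the two parameter values yields $\dfrac{\sigma(N)}{N}\left(\dfrac{N'}{N}\right)^{\varepsilon}=\dfrac{\sigma(N')}{N'}$.

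Next I define, for real $t\geq3$,
\[
h(t)=\frac{1}{\log\log t}\,\frac{\sigma(N)}{N}\left(\frac{t}{N}\right)^{\varepsilon}.
\]
By the displayed bound, $f(n)\leq h(n)$ for every integer $n$ in $[N,N']$, while the two evaluations above give $h(N)=f(N)$ and $h(N')=f(N')$. Hence it suffices to prove that $h(t)\leq\max\{h(N),h(N')\}$ for all real $t\in[N,N']$. Taking the logarithmic derivative,
\[
\frac{d}{dt}\log h(t)=\frac{\varepsilon}{t}-\frac{1}{t\log t\log\log t}=\frac{1}{t}\left(\varepsilon-\frac{1}{\log t\log\log t}\right).
\]
The crucial observation is that $t\mapsto \log t\log\log t$ is positive and strictly increasing on $[3,\infty)$ (its derivative is $(1+\log\log t)/t>0$ there), so $1/(\log t\log\log t)$ is strictly decreasing. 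Consequently $\varepsilon-1/(\log t\log\log t)$ is strictly increasing and changes sign at most once, from negative to positive, as $t$ runs through $[N,N']$. Thus $\log h$—and with it $h$—is first nonincreasing and then nondecreasing, and a function of this shape attains its maximum over a closed interval at one of the endpoints. Therefore $h(t)\leq\max\{h(N),h(N')\}=\max\{f(N),f(N')\}$ on $[N,N']$, and combining with $f(n)\leq h(n)$ completes the argument.

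I expect the only genuinely delicate point to be the first step—pinning down the common parameter $\varepsilon$ for the two successive CA numbers and justifying the \emph{equality} $\sigma(N)/N^{1+\varepsilon}=\sigma(N')/N'^{1+\varepsilon}$ rather than a mere inequality; everything after that is the elementary sign analysis of a single derivative. The monotonicity of $\log t\log\log t$ has to be checked all the way down to $t=3$ (where $\log\log t$ is small but still positive), which is exactly why the hypothesis $N\geq3$ is present.
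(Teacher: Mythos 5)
Your argument is correct, and it is essentially Robin's original proof of this proposition: the paper itself states the result without proof, citing \cite{Rob}, and Robin's argument likewise bounds $f$ on $[N,N']$ by $t\mapsto \dfrac{\sigma(N)}{N^{1+\varepsilon}}\dfrac{t^{\varepsilon}}{\log\log t}$ for the transition parameter $\varepsilon$ shared by the two successive CA numbers and then observes that this function decreases and then increases. The one point you rightly flag as delicate --- that successive CA numbers admit a common $\varepsilon$ at which $\sigma(N)/N^{1+\varepsilon}=\sigma(N')/N'^{1+\varepsilon}$ --- is the standard abutment of the parameter intervals of consecutive CA numbers and is exactly what Robin uses.
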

This fact shows, that if there is a counterexample to (\ref{Ro}), then there exists at list one colossally abundant number which violates it.

\begin{cor}\label{N<n<N' ca xa}
Let $N<N'$ be two consecutive CA numbers. If there exists an XA number $n>10080$ satisfying $N<n<N'$, then $N'$ is also an XA.
\end{cor}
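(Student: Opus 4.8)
The aim is to verify the defining inequality of $XA$ for $N'$. Since $N'>n>10080$, it suffices to show that $f(m)<f(N')$ for every integer $m$ with $10080\le m<N'$. The natural strategy is to split this range at the given extremely abundant number $n$: on $10080\le m<n$ I would use the extremal property of $n\in XA$, while on $n\le m<N'$ I would invoke Robin's Proposition above for the consecutive $CA$ pair $N,N'$, since every such $m$ satisfies $N\le m\le N'$.

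The first and decisive step is to prove the inequality $f(N')\ge f(n)$, i.e.\ that the larger of the two endpoint values of $f$ on $[N,N']$ is attained at $N'$. The only consecutive $CA$ pair with $N<10080<N'$ is $(N,N')=(5040,55440)$, and for it a glance at the list of extremely abundant numbers shows that no $n\in XA$ lies in $(10080,55440)$, so the hypothesis is vacuous; hence I may assume $10080\le N<n$. Then $N$ is an admissible value of $m$ in the definition of $n\in XA$, so $f(N)<f(n)$. On the other hand Robin's Proposition applied to $N\le n\le N'$ gives $f(n)\le\max\{f(N),f(N')\}$. Since $f(N)<f(n)$, the maximum cannot equal $f(N)$, so $\max\{f(N),f(N')\}=f(N')$ and therefore $f(N')\ge f(n)>f(N)$.

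The two ranges now follow. For $10080\le m<n$ the definition of $n\in XA$ yields $f(m)<f(n)\le f(N')$, hence $f(m)<f(N')$. For $n\le m<N'$ we have $N<m<N'$, so Robin's Proposition gives $f(m)\le\max\{f(N),f(N')\}=f(N')$. Thus $f(m)\le f(N')$ on the whole interval $[10080,N')$, and the inequality is already strict whenever $m<n$.

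The remaining, and genuinely delicate, point is the strictness on the block $n\le m<N'$, where Robin's Proposition only delivers $f(m)\le f(N')$ whereas the definition of $XA$ demands $f(m)<f(N')$ for all $m<N'$ (notably for $m=n$, where a priori only $f(n)\le f(N')$ is available). To close this gap the plan is to show that on $[N,N']$ the common maximal value $\max\{f(N),f(N')\}=f(N')$ is attained only at the right endpoint, that is, no integer $m_0$ with $n\le m_0<N'$ satisfies $f(m_0)=f(N')$. This tie-breaking inside a single $CA$ block --- rather than the two routine range estimates above --- is where I expect the main effort to be concentrated.
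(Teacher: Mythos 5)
Your opening reduction is sound and matches the paper's key step: after disposing of the case $N<10080$ (where the hypothesis is indeed vacuous), the inequality $f(N)<f(n)$ from the XA property of $n$ applied at $m=N$, combined with Robin's proposition, forces $\max\{f(N),f(N')\}=f(N')$ and hence $f(N')\ge f(n)>f(N)$; the range $10080\le m<n$ is then handled correctly. But the proposal is not a proof. It ends by announcing that the strict inequality $f(m)<f(N')$ on the block $n\le m<N'$ remains to be established, and offers no argument for it; Robin's proposition alone gives only $f(m)\le f(N')$ there, which does not place $N'$ in XA. As written, this is a genuine gap, not a routine detail.

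The missing idea, which is how the paper proceeds, is to replace the given $n$ by $n'=\max\{m\in XA:\ N<m<N'\}$ (nonempty by hypothesis). Every integer $m$ with $n'<m<N'$ then fails to be extremely abundant, so an easy induction gives $\max_{10080\le j<m}f(j)=f(n')$ and hence $f(m)\le f(n')$; together with $f(j)<f(n')$ for $10080\le j<n'$, the entire verification collapses to the single comparison of $f(n')$ with $f(N')$. Robin's proposition plus $f(n')>f(N)$ yields $f(n')\le f(N')$, and the paper concludes $f(n')<f(N')$, hence $N'\in XA$. (To be fair to your worry: the paper itself passes from $\le$ to $<$ at that last step without comment, so the one remaining tie $f(n')=f(N')$ is asserted rather than excluded. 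But the maximality device is precisely what reduces the ``main effort'' you defer --- ruling out ties at every $m$ in an interval --- to that single endpoint comparison, and it is what your write-up lacks.)
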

\begin{proof}
Let $N<N'$ be two consecutive CA number. If $10080<n\in XA$ is such that $N<n<N'$, then put
$$
X=\{m\in XA:\ N<m<N'\}.
$$
By the assumption as $n\in XA$, we have $X\neq\emptyset$. Let $n'=\max X$. Since $n'\in XA$ and $n'>N$, then $f(n')>f(N)$. From inequality (\ref{f(n)<max f(N), f(N')}) we must have $f(n')<f(N')$. Hence, $N'\in XA$.
\end{proof}
\begin{rem}
In the case $N<n=10080<N'$, we have $N=5040$, $N'=55440$ and
$$
f(N)\approx1.790973367,\qquad f(n)\approx1.755814339,\qquad f(N')\approx1.751246515.
$$
Hence inequality (\ref{f(n)<max f(N), f(N')}) satisfies with $f(n)<f(N)=\max\{f(N),\ f(N')\}$.
\end{rem}

\begin{thm}
If RH holds, then there exist infinitely many CA numbers that are also XA.
\end{thm}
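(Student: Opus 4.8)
The plan is to combine Theorem~\ref{rh} with Corollary~\ref{N<n<N' ca xa}. Assuming RH, Theorem~\ref{rh} gives $\#XA=\infty$, so the set of extremely abundant numbers is unbounded. The strategy is then to attach to every sufficiently large XA number a colossally abundant number that is itself XA, and to show that these colossally abundant numbers are unbounded, whence $CA\cap XA$ is infinite.

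First I would fix an arbitrary XA number $n>10080$. Because there are infinitely many CA numbers, exactly one of two cases occurs: either $n\in CA$, or $n$ lies strictly between two consecutive CA numbers $N<n<N'$. In the first case $n\in CA\cap XA$ already. In the second case Corollary~\ref{N<n<N' ca xa} applies verbatim --- its hypothesis, that an XA number exceeding $10080$ sits strictly inside the gap $(N,N')$, is met by $n$ itself --- and yields $N'\in XA$; since $N'\in CA$ by construction, we obtain $N'\in CA\cap XA$ with $N'>n$.

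Next I would package this into an unboundedness argument. In both cases the construction produces an element of $CA\cap XA$ that is $\geq n$. Letting $n$ range over the (infinite, hence unbounded) set of XA numbers and taking $n\rightarrow\infty$, the associated elements of $CA\cap XA$ are likewise unbounded, so $CA\cap XA$ cannot be finite.

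The only genuine subtlety, and the step I would be most careful about, is that distinct large XA numbers $n$ might fall into the same CA gap and therefore be assigned the same $N'$; this does not by itself guarantee infinitely many \emph{distinct} elements of $CA\cap XA$. The inequality $N'>n$ (equivalently, that the produced CA number is never smaller than the XA number we started from) is exactly what rules this out: since the XA numbers are unbounded, their images under the map $n\mapsto N'$ (or $n\mapsto n$ in the first case) are unbounded as well. All the analytic content is hidden inside Corollary~\ref{N<n<N' ca xa}, and behind it Robin's Proposition giving \eqref{f(n)<max f(N), f(N')}, so no further estimates are needed.
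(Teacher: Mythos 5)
Your proposal is correct and follows essentially the same route as the paper: invoke Theorem~\ref{rh} to get $\#XA=\infty$, then for each XA number $n>10080$ locate consecutive CA numbers $N<n\leq N'$ and apply Corollary~\ref{N<n<N' ca xa} (or observe $n=N'$) to produce an element of $CA\cap XA$ that is $\geq n$. Your explicit attention to the unboundedness of the resulting CA numbers is a point the paper leaves implicit, but the argument is the same.
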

\begin{proof}
If RH holds, then by Theorem \ref{rh}, $\#XA=\infty$. Let $n$ be in XA. Since $\#CA=\infty$ (see \cite{Al}, \cite{Erd}), there exist two successive colossally abundant numbers $N,\ N'$ such that $N<n\leq N'$. If $N'=n$ then it is readily in XA, otherwise $N'$ belongs to XA via Corollary \ref{N<n<N' ca xa}.
\end{proof}
It can be seen that there exist infinitely many CA numbers $N$ for which the largest prime factor $p$ is greater than $\log N$. For this purpose, we use the following lemma

\begin{lem}[\cite{Caveney-2}, Lemma 3]\label{log N<theta+cx1/2}
Let $N$ be a CA number of parameter $\varepsilon<F(2, 1) = \log(3/2)/ \log 2$ and define $x=x(\varepsilon)$ by (\ref{x(epsilon)}). Then
\begin{enumerate}[\upshape (i)]
\item for some constant $c >0$
$$
\log N\leq\vartheta(x)+c\sqrt{x}.
$$
\item Moreover, if $N$ is the largest CA number of parameter $\varepsilon$, then
$$
\vartheta(x)\leq\log N\leq\vartheta(x)+c\sqrt{x}.
$$
\end{enumerate}
\end{lem}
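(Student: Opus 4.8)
The plan is to read $\log N$ off the product representation of a CA number and to peel off the contribution of the exponent-one primes, which produces the term $\vartheta(x)$; everything that remains must then be shown to be $O(\sqrt{x})$. Writing $x=x_1\ge x_2\ge\cdots\ge x_K\ge2$ for the thresholds defined by (\ref{x(epsilon)}), formula (\ref{CA N=prod p}) (or (\ref{CA N=prod two or four}) at boundary values of $\varepsilon$) gives $\log N\le\sum_{k=1}^K\vartheta(x_k)=\vartheta(x)+\sum_{k=2}^K\vartheta(x_k)$. Thus part (i) reduces to proving $\sum_{k=2}^K\vartheta(x_k)\le c\sqrt{x}$.

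The heart of the matter is an upper bound for each $x_k$ in terms of $x=x_1$. Setting $S_k=x_k+\cdots+x_k^k$, the defining relations $F(x_1,1)=F(x_k,k)=\varepsilon$ read $\log(1+1/S_k)=\frac{\log x_k}{\log x_1}\log(1+1/x_1)$. Since $x_k\le x_1$, the right-hand side is at most $\log(1+1/x_1)$, and as $t\mapsto\log(1+1/t)$ is decreasing this forces $S_k\ge x_1$; combined with $S_k\le 2x_k^k$ (valid for $x_k\ge2$) it yields $x_k^k\ge x_1/2$, hence $\log x_k\ge\frac1{2k}\log x_1$ once $x_1\ge4$. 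Feeding this back and using (\ref{log(1+t)}) in the forms $\log(1+1/S_k)\le 1/S_k$ and $\log(1+1/x_1)\ge 1/(x_1+1)$, I would obtain $\frac1{2k}\le\frac{\log x_k}{\log x_1}\le\frac{x_1+1}{S_k}$, so that $x_k^k\le S_k\le 2k(x_1+1)$ and therefore $x_k\le(4k\,x_1)^{1/k}$.

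With this bound and the Chebyshev-type estimate $\vartheta(y)\le c_0\,y$ (from Lemma \ref{ros-sch}) the sum splits into its first term and a tail: the $k=2$ term is $\vartheta(x_2)\le c_0(8x_1)^{1/2}=O(\sqrt{x})$, which is precisely the source of the $\sqrt{x}$, while for $k\ge3$ the same bound together with $K=O(\log x)$ (which follows from $2^K\le x_K^K\le 2K(x_1+1)$) gives $\sum_{k\ge3}\vartheta(x_k)=O\big(x^{1/3}(\log x)^{4/3}\big)=o(\sqrt{x})$. Summing proves $\log N\le\vartheta(x)+c\sqrt{x}$ for large $x$, and enlarging $c$ to absorb the finitely many CA numbers whose $x$ lies below any fixed threshold delivers (i) for all $\varepsilon<F(2,1)$. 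For (ii), if $N$ is the largest CA number of parameter $\varepsilon$, then (\ref{p(N)=p}) shows $p(N)=x$, so every prime $p\le x$ divides $N$ and $\prod_{p\le x}p\mid N$; hence $\log N\ge\vartheta(x)$, which combined with (i) gives the two-sided estimate.

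I expect the main obstacle to be the passage from the transcendental defining equations to the clean polynomial bound $x_k\le(4k\,x_1)^{1/k}$, i.e. pinning the ratio $\log x_k/\log x_1$ between $\frac1{2k}$ and $1$; once that is in hand, the separation of the dominant $k=2$ term from the lower-order tail, and the bound $K=O(\log x)$, are routine. An alternative packaging would route the tail through the identity $\psi(x)-\vartheta(x)=\sum_{m\ge2}\vartheta(x^{1/m})<1.42620\sqrt{x}$ of Lemma \ref{psi-theta}, but since the thresholds $x_k$ are only comparable to, and not equal to, $x^{1/k}$, the direct Chebyshev summation above seems cleaner.
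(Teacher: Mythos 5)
The paper does not prove this lemma at all: it is imported verbatim from Caveney--Nicolas--Sondow (\cite{Caveney-2}, Lemma 3), so there is no internal proof to compare against. Your argument is a correct, self-contained reconstruction, and it is in substance the same proof as in the cited source: read $\log N\le\sum_{k=1}^K\vartheta(x_k)$ off the product formula (\ref{CA N=prod two or four}), show $x_k^k\asymp x$ so that $x_2=O(\sqrt{x})$ carries the whole error while the $k\ge3$ tail is $o(\sqrt{x})$, and for (ii) use that the largest CA number of parameter $\varepsilon$ contains every prime $\le x$. The chain $x_1\le S_k\le 2x_k^k$ and $x_k^k\le S_k\le 2k(x_1+1)$ is sound, as is $K=O(\log x)$. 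Two small remarks. First, in part (ii) you invoke (\ref{p(N)=p}), which only speaks to the case where $x$ is itself prime; the clean justification is simply that the largest $N$ of parameter $\varepsilon$ is $\prod_{k=1}^K\prod_{p\le x_k}p$, whose $k=1$ factor already gives $\log N\ge\vartheta(x)$. Second, you have silently (and correctly) read the denominator in the paper's display (\ref{x(epsilon)}) as $\log x_k$ rather than the printed $\log x$; with the printed version the defining equation for $x_k$ would collapse to $S_k=x$, which is not what is intended, so it is worth flagging that typo rather than working around it tacitly.
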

\begin{lem}[\cite{Caveney-2}, Lemma 4]\label{lemma 4, caveney 2}
There exists a constant $c > 0$ such that for infinitely many primes $p$ we have
\begin{equation}\label{theta p-p<cp1/2logloglog p}
\vartheta(p)<p-c\sqrt{p}\log\log\log p,
\end{equation}
and for infinitely many other primes $p$ we have
$$
\vartheta(p)>p+c\sqrt{p}\log\log\log p.
$$
\end{lem}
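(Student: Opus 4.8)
The plan is to reduce the statement to the oscillation result for the continuous variable already recorded in Corollary \ref{thetax-x omega}, namely $\vartheta(x)-x=\Omega_\pm(x^{1/2}\log\log\log x)$, and then to transfer each sign of the oscillation from an arbitrary large real $x$ to a nearby prime. The structural fact I would exploit is that $\vartheta$ is a step function, constant on each interval $[p_k,p_{k+1})$ and jumping by $\log p_{k+1}$ at $p_{k+1}$; consequently $\vartheta(x)-x$ is strictly decreasing between consecutive primes, so rounding a witnessing $x$ to the appropriate adjacent prime either preserves the inequality outright or perturbs it only by a lower-order amount.

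For the upper oscillation I would argue directly. By Corollary \ref{thetax-x omega} there exist arbitrarily large $x$ with $\vartheta(x)-x>c_0\,x^{1/2}\log\log\log x$ for some fixed $c_0>0$ (one may take any $c_0<\tfrac12$, the constant $\tfrac12$ being inherited from Lemma \ref{omega ingham} after discarding the difference $\psi-\vartheta=O(\sqrt{x})$ supplied by Lemma \ref{psi-theta}). Let $p=p_k$ be the largest prime not exceeding such an $x$, so that $\vartheta(p)=\vartheta(x)$ and $p\le x$. Then
$$\vartheta(p)-p=\vartheta(x)-p\ge\vartheta(x)-x>c_0\,x^{1/2}\log\log\log x\ge c_0\,p^{1/2}\log\log\log p,$$
the last step using that $t\mapsto t^{1/2}\log\log\log t$ is increasing for large $t$ together with $x\ge p$. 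This yields infinitely many primes with $\vartheta(p)>p+c\,\sqrt{p}\log\log\log p$, and no control on prime gaps is required here.

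The lower oscillation is the delicate case and is where I expect the main obstacle. Corollary \ref{thetax-x omega} supplies arbitrarily large $x$ with $\vartheta(x)-x<-c_0\,x^{1/2}\log\log\log x$; writing $x\in[p_k,p_{k+1})$ and setting $p=p_{k+1}$, the identity $\vartheta(p)=\vartheta(x)+\log p$ together with $x<p$ gives
$$\vartheta(p)-p=\bigl(\vartheta(x)-x\bigr)+\log p-(p-x)<-c_0\,x^{1/2}\log\log\log x+\log p.$$
Two corrections must now be controlled: the additive jump $\log p$, and the fact that the dominant term is expressed in $x$ rather than in $p$. The first is harmless, since $\log p=o\!\left(p^{1/2}\log\log\log p\right)$. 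For the second I would invoke a prime-gap bound such as Lemma \ref{pk+1<pk(1+1/log2 pk)}, which forces $p_{k+1}\sim p_k$ and hence $x\sim p$; therefore $x^{1/2}\log\log\log x\sim p^{1/2}\log\log\log p$, and for all sufficiently large $k$ the right-hand side is at most $-\tfrac{c_0}{2}\,p^{1/2}\log\log\log p$. This produces infinitely many primes with $\vartheta(p)<p-c\,\sqrt{p}\log\log\log p$. Taking $c$ to be the smaller of the two constants obtained furnishes a single $c>0$ valid for both assertions.
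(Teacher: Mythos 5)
Your proposal is correct. Note, however, that the paper itself offers no proof of this statement: it is imported verbatim from \cite{Caveney-2} (Lemma 4) and used as a black box, so there is no internal argument to compare against. What you have written is a legitimate, essentially self-contained derivation from material that \emph{is} in the paper, and it follows the same basic route as the cited source: start from Littlewood's oscillation for $\psi$ (Lemma \ref{omega ingham}), pass to $\vartheta$ via $\psi-\vartheta=O(\sqrt{x})$ (Lemma \ref{psi-theta}, giving Corollary \ref{thetax-x omega}), and then move each sign of the oscillation from a real witness $x$ onto a prime. Your handling of the two cases is the right one and is the only genuinely delicate point: for the positive oscillation, rounding down to the largest prime $p\le x$ keeps $\vartheta$ fixed and only helps the inequality, so no gap information is needed; for the negative oscillation, rounding down would introduce an error of size the prime gap, which is far larger than $\sqrt{p}\log\log\log p$, so you correctly round \emph{up} to $p_{k+1}$, where the only corrections are the single jump $\log p=o\bigl(\sqrt{p}\log\log\log p\bigr)$ and the replacement of $x$ by $p$ in the main term, the latter controlled by $p_{k+1}/p_k\to1$ (Lemma \ref{pk+1<pk(1+1/log2 pk)}; even Bertrand's postulate would do, at the cost of a factor $\sqrt{2}$ in the constant). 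All steps check out, and taking the minimum of the two constants gives a single $c>0$ as required.
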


\begin{thm}\label{log N<p(N)}
There are infinitely many CA numbers $N_\varepsilon$, such that $\log N_\varepsilon<p(N_\varepsilon)$.
\end{thm}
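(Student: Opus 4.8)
The plan is to produce, for infinitely many primes $p$, a colossally abundant number whose largest prime factor is exactly $p$ and whose logarithm falls short of $p$. The two ingredients are the upper bound for $\log N$ in terms of $\vartheta$ supplied by Lemma~\ref{log N<theta+cx1/2}, and the negative oscillation of $\vartheta(p)-p$ guaranteed by Lemma~\ref{lemma 4, caveney 2}.

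First, by Lemma~\ref{lemma 4, caveney 2} there is a constant $c_1>0$ and infinitely many primes $p$ for which
$$
\vartheta(p)<p-c_1\sqrt{p}\,\log\log\log p .
$$
I would fix such a (large) prime $p$ and set $\varepsilon=F(p,1)=\log(1+1/p)/\log p$, so that the quantity $x=x(\varepsilon)$ defined in~(\ref{x(epsilon)}) equals $p$. Since $F(\cdot,1)$ is decreasing and $p>2$, we have $\varepsilon<F(2,1)$, so Lemma~\ref{log N<theta+cx1/2} applies; and since $x_1=p$ is prime, $\varepsilon\in\mathcal{E}$. Let $N=N_\varepsilon$ be the largest colossally abundant number of parameter $\varepsilon$. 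Then by~(\ref{p(N)=p}) its largest prime factor is $p(N)=p$.

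Next I would invoke the upper bound in Lemma~\ref{log N<theta+cx1/2}, which furnishes a constant $c_2>0$ with
$$
\log N\leq\vartheta(x)+c_2\sqrt{x}=\vartheta(p)+c_2\sqrt{p}.
$$
Combining this with the displayed estimate for $\vartheta(p)$ yields
$$
\log N<p-c_1\sqrt{p}\,\log\log\log p+c_2\sqrt{p}
=p+\sqrt{p}\bigl(c_2-c_1\log\log\log p\bigr).
$$
Because $\log\log\log p\to\infty$, for every sufficiently large $p$ in our infinite family we have $c_1\log\log\log p>c_2$, whence $\log N<p=p(N)$. As $p$ ranges over the infinitely many admissible primes, the resulting numbers $N_\varepsilon$ are distinct (their largest prime factors differ), producing infinitely many CA numbers with $\log N_\varepsilon<p(N_\varepsilon)$.

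I expect the only delicate point to be the bookkeeping in the middle step: verifying that the choice $\varepsilon=F(p,1)$ really forces $x(\varepsilon)=p$, and that passing to the \emph{largest} CA number of that parameter is precisely what makes $p$ appear as the top prime factor (with exponent one), so that~(\ref{p(N)=p}) is legitimately invoked. Once the largest prime factor is pinned to $p$, the conclusion is a one-line combination of the two lemmas, with the triple logarithm in Lemma~\ref{lemma 4, caveney 2} providing exactly the slack needed to overcome the $\sqrt{p}$ error term in Lemma~\ref{log N<theta+cx1/2}.
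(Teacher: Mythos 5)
Your proposal is correct and follows essentially the same route as the paper's own proof: choose $p$ from the infinite family in Lemma~\ref{lemma 4, caveney 2}, take the largest CA number of parameter $\varepsilon=F(p,1)$ so that $p(N_\varepsilon)=p$ by~(\ref{p(N)=p}), and combine Lemma~\ref{log N<theta+cx1/2}(ii) with the negative oscillation of $\vartheta(p)-p$ so that the $\log\log\log p$ factor swamps the $c\sqrt{p}$ error term. Your added remark that the resulting $N_\varepsilon$ are pairwise distinct (their largest prime factors differ) is a small point the paper leaves implicit.
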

\begin{proof}
Choose $p$ as in (\ref{theta p-p<cp1/2logloglog p}) and $N_\varepsilon$ the largest CA-number of parameter
$$
\varepsilon=F(p,1).
$$
Then, from (\ref{p(N)=p}), one has $p(N_\varepsilon) = p$. By Lemma \ref{log N<theta+cx1/2}(ii)
\begin{align*}
\log N_\varepsilon-\vartheta(p)<c\sqrt{p},\qquad(\text{for some}\ c>0).
\end{align*}
On the other hand, by Lemma \ref{lemma 4, caveney 2}, There exists a constant $c' > 0$ such that for infinitely
many primes $p$ we have
$$
\vartheta(p)-p<-c'\sqrt{p}\log\log\log p,\qquad(c'>0).
$$
Hence, for any $x_0$, there exists an $x>x_0$ such that
$$
\log N_\varepsilon-p<\{c-c'\log\log\log p\}\sqrt{p}<0.
$$
We get the desired result.
\end{proof}

\paragraph{\textbf{Extremely Abundant Numbers}} Returning to extremely abundants, we present some properties of them.
\begin{thm}\label{p<log n prop}
Let $n=2^{k_2}\cdots p$ be an extremely abundant number. Then
\[
p<\log n.
\]
\end{thm}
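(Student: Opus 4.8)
The plan is to argue by contradiction. Assume $n=2^{k_2}\cdots p$ is extremely abundant, that $p$ occurs in $n$ with exponent $k_p\geq1$, and that, contrary to the claim, $p\geq\log n$. I would then manufacture a competitor $m<n$ lying in the admissible range $10080\leq m<n$ with $f(m)>f(n)$, which contradicts Definition \ref{extn}. The natural competitor is $m=n/p$, obtained by stripping one factor of the largest prime. Since $\sigma(\cdot)/\cdot$ is multiplicative, only the local factor at $p$ changes, so
$$
\frac{\sigma(m)/m}{\sigma(n)/n}=\frac{1-p^{-k_p}}{1-p^{-(k_p+1)}},\qquad\text{hence}\qquad\frac{f(m)}{f(n)}=\frac{1-p^{-k_p}}{1-p^{-(k_p+1)}}\cdot\frac{\log\log n}{\log\log m}.
$$
First I would dispose of the $\sigma$-factor by the elementary observation that $\frac{1-p^{-(k_p+1)}}{1-p^{-k_p}}\leq 1+\frac1p$ for every $k_p\geq1$ (equality exactly when $k_p=1$, the generic value of the top exponent). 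Consequently $\frac{f(m)}{f(n)}\geq\frac{p}{p+1}\cdot\frac{\log\log n}{\log\log m}$, and it suffices to prove the single reverse-direction estimate $\frac{\log\log n}{\log\log m}>1+\frac1p$.

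Next I would bound the $\log\log$ ratio from below. With $m=n/p$ one has $\log m=\log n-\log p$, and the left inequality in (\ref{log(1+t)}) gives
$$
\log\log n-\log\log m=-\log\!\left(1-\frac{\log p}{\log n}\right)>\frac{\log p}{\log n}.
$$
Dividing by $\log\log m<\log\log n$ yields $\frac{\log\log n}{\log\log m}-1>\frac{\log p}{\log n\,\log\log n}$, so the whole problem collapses to the inequality $p\log p\geq\log n\,\log\log n$. This is exactly where the hypothesis $p\geq\log n$ is used: it forces $\log p\geq\log\log n$, and multiplying $p\geq\log n$ by $\log p\geq\log\log n$ (both sides positive) gives $p\log p\geq\log n\,\log\log n$. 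Therefore $\frac{\log\log n}{\log\log m}-1>\frac1p$, whence $f(m)>f(n)$, the desired contradiction.

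To finish I would tidy two bookkeeping points. The competitor must satisfy $10080\leq m<n$: one always has $m=n/p<n$, while Corollary \ref{log an<p<log an} bounds $p$ by roughly $\log n$, so $m=n/p>n/(2\log n)\geq10080$ for all sufficiently large $n$, guaranteeing that the defining inequality of XA really applies to $m$; the finitely many extremely abundant numbers below that threshold (all recorded in Table \ref{tabl}) are verified by direct computation. The hard part will be the tightness of the central estimate: the comparison of $p\log p$ with $\log n\,\log\log n$ leaves essentially no margin, so one must invoke the \emph{strict} logarithmic inequality $-\log(1-t)>t$ in order to convert even the borderline case $p=\log n$ into a strict contradiction. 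Any cruder bound on $\log\log n-\log\log m$ would fail to close the gap.
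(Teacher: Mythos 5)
Your proposal is correct and follows essentially the same route as the paper: both compare $n$ with the competitor $m=n/p$, invoke the defining inequality $f(m)<f(n)$ together with the local factor bound $\frac{\sigma(n)/n}{\sigma(m)/m}\leq 1+\frac1p$ and the logarithmic inequality (\ref{log(1+t)}), and close by comparing $p\log p$ with $\log n\log\log n$; the paper merely runs the argument directly (deriving $\frac1p>\frac{\log p}{\log n\log\log n}$ and concluding $p<\log n$) rather than by contradiction. The only cosmetic differences are that you track the exponent $k_p$ explicitly and that you justify $m\geq 10080$ via Corollary \ref{log an<p<log an}, whereas the paper does so by a short computation on the superabundant numbers in $(10080,\,11\times 10080]$.
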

\begin{proof}
For $n=10080$ we have
$$
p(10080)=7<9.218<\log(10080).
$$
Let $n>10080$ be an extremely abundant number and $m=n/p$. Then $m>10080$, since the only superabundant numbers between $10080$ and $11\times10080$ are $\{$10080,\ 15120,\ 25200,\ 27720,\ 55440,\ 110880$\}$ and computation shows that non of them are in XA. Hence by definition
\[
\frac{\sigma(n)/n}{\sigma(m)/m}>\frac{\log\log n}{\log\log m}.
\]
So
\[
1+\frac{1}{p}>\frac{\log\log n}{\log\log m}\Rightarrow \frac{1}{p}>\frac{\log(1+\log p/\log m)}{\log\log m}.
\]\\
Using inequality (\ref{log(1+t)}) we have
\[
\frac{1}{p}>\frac{\log p}{\log n\log\log m}>\frac{\log p}{\log n\log\log n}\Rightarrow p<\log n.
\]

\end{proof}
We mention a similar result proved by Choie et al.
\begin{prop}[\cite{Choie}, Lemma 6.1]
Let $t\geq2$ be fixed. Suppose that there exists a $t$-free integer exceeding 5040 that does not satisfy Robin's inequality. Let $n=2^{k_2}\cdots p$ be the smallest such integer. Then $p<\log n$.
\end{prop}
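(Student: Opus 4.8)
The plan is to adapt the proof of Theorem~\ref{p<log n prop} almost verbatim, replacing ``extremely abundant'' by ``least $t$-free violator of (\ref{Ro})''. Write $n=2^{k_2}\cdots p$, where $p$ is the largest prime factor, let $p^a\,\|\,n$, and set $m=n/p$. Dividing out a single factor of $p$ lowers the exponent of $p$ by one and changes no other exponent, so $m$ is again $t$-free and $m<n$. By hypothesis $f(n)\geq e^\gamma$, and, provided we can guarantee $m>5040$, the minimality of $n$ forces $m$ to satisfy Robin's inequality, i.e. $f(m)<e^\gamma$; hence $f(n)>f(m)$.

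Next I would turn $f(n)>f(m)$ into a bound on $p$. A direct computation gives
$$\frac{\sigma(n)/n}{\sigma(m)/m}=\frac{p^{a+1}-1}{p^{a+1}-p}\leq 1+\frac1p,$$
the inequality holding for every $a\geq1$ (with equality exactly when $a=1$, the superabundant case of Theorem~\ref{p<log n prop}). Since $f(n)>f(m)$ is the statement $\frac{\sigma(n)/n}{\sigma(m)/m}>\frac{\log\log n}{\log\log m}$, we obtain $1+\frac1p>\frac{\log\log n}{\log\log m}$. Writing $\log\log n=\log\log m+\log\!\left(1+\frac{\log p}{\log m}\right)$ and applying the left inequality in (\ref{log(1+t)}) with $t=\log p/\log m$, this becomes
$$\frac1p>\frac{\log p}{\log n\,\log\log m}>\frac{\log p}{\log n\,\log\log n},$$
so that $p\log p<(\log n)\log(\log n)$. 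Because $x\mapsto x\log x$ is increasing for $x>1/e$, this yields $p<\log n$, exactly as in Theorem~\ref{p<log n prop}.

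The hard part is the one structural input above, namely that $m=n/p>5040$; this is what lets the minimality of $n$ be applied and simultaneously keeps $m$ away from the finitely many integers below $5040$ for which Robin's inequality fails. In Theorem~\ref{p<log n prop} the analogous step was settled by listing the handful of superabundant numbers in $[10080,11\cdot10080]$ and checking each directly, but here $n$ need not be superabundant and the $t$-free integers are far too dense for such a list to help. Instead I would argue by contradiction: if $m\leq5040$ then
$$\frac{\sigma(n)}{n}=\frac{\sigma(n)/n}{\sigma(m)/m}\,\frac{\sigma(m)}{m}<2\max_{k\leq5040}\frac{\sigma(k)}{k},$$
a bounded quantity, so that $f(n)=\sigma(n)/(n\log\log n)$ can only reach $e^\gamma$ for $n$ in an explicit finite range; a finite verification (of the same computational type already invoked for $5040<n\leq10080$ in the proof of Theorem~\ref{ext}) then excludes any such $n$. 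Once $m>5040$ is secured, the remainder is the routine chain of the previous paragraph, and the proof closes.
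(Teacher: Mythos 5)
The paper itself gives no proof of this proposition: it is quoted from \cite{Choie} (Lemma 6.1) only to set it beside Theorem \ref{p<log n prop}, so the only in-paper object to compare you with is the proof of that theorem, which you correctly identify as the template. Your main chain is sound and is indeed the same computation: with $p^{a}\,\|\,n$ and $m=n/p$, the ratio $\frac{\sigma(n)/n}{\sigma(m)/m}=\frac{p^{a+1}-1}{p^{a+1}-p}\leq 1+\frac{1}{p}$ is correct for every $a\geq1$, the number $m$ is still $t$-free, minimality gives $f(m)<e^{\gamma}\leq f(n)$ provided $m>5040$, and the manipulation with inequality (\ref{log(1+t)}) then yields $p\log p<(\log n)\log\log n$ and hence $p<\log n$.

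The genuine gap is exactly where you place it, but your repair does not close it. From $\sigma(n)/n<2\max_{k\leq5040}\sigma(k)/k\approx 7.7$ and $f(n)\geq e^{\gamma}$ you only get $\log\log n\leq 7.7\,e^{-\gamma}\approx 4.3$, i.e. $n\leq e^{e^{4.3}}\approx 10^{32}$. A ``finite verification'' over that range is nothing like the check for $5040<n\leq10080$ invoked in Theorem \ref{ext}; it is computationally out of reach, and in your write-up it is asserted rather than performed, so the case $m\leq5040$ remains open. The case can in fact be rescued with a sharper count: if $p^{2}\mid n$ then $p\mid m$, so $p\leq m\leq5040$ and $n=mp\leq5040^{2}$; if $p\,\|\,n$ then $\omega(n)=\omega(m)+1\leq6$ (since $m\leq5040<2\cdot3\cdot5\cdot7\cdot11\cdot13$), whence $\sigma(n)/n<\prod_{j\leq6}p_{j}/(p_{j}-1)<5.22$ and $n<1.3\times10^{8}$ --- a range in which Robin's inequality has actually been verified. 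As written, though, the step ``a finite verification then excludes any such $n$'' is a deferral, not an argument, and it is precisely the point where this proposition requires work beyond a verbatim transfer of Theorem \ref{p<log n prop}.
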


In the previous section we showed that, if RH holds, then there exist infinitely many CA numbers that are also XA. Next theorem is a conclusion of Theorems \ref{log N<p(N)} and \ref{p<log n prop} which is independent of RH.
\begin{thm}
There exist infinitely many CA numbers that are not XA.
\end{thm}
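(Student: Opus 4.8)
The plan is to obtain the statement as an immediate contrapositive consequence of the two cited theorems, with no new computation required. Theorem \ref{p<log n prop} supplies a \emph{necessary} condition for membership in XA: every extremely abundant number $n=2^{k_2}\cdots p$ satisfies $p<\log n$, i.e. its largest prime factor lies strictly below $\log n$. On the other hand, Theorem \ref{log N<p(N)} produces an \emph{infinite} family of colossally abundant numbers $N_\varepsilon$ that violate exactly this inequality, namely $\log N_\varepsilon<p(N_\varepsilon)$. The two conditions are mutually exclusive, so each such $N_\varepsilon$ is debarred from XA.

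Concretely, first I would invoke Theorem \ref{log N<p(N)} to fix an infinite set $\mathcal{N}\subset CA$ with $\log N<p(N)$ for every $N\in\mathcal{N}$. Since $\mathcal{N}$ is infinite and its elements tend to infinity, all but finitely many members satisfy $N>10080$; discarding the finitely many small ones, I may assume every $N\in\mathcal{N}$ exceeds $10080$, so that the question of whether $N$ is extremely abundant is meaningful (recall XA numbers are $\geq 10080$). Next, suppose for contradiction that some such $N\in\mathcal{N}$ were in XA. Applying Theorem \ref{p<log n prop} to $N$ would force $p(N)<\log N$, directly contradicting $\log N<p(N)$. Hence no $N\in\mathcal{N}$ is extremely abundant, and $\mathcal{N}$ furnishes infinitely many CA numbers lying outside XA.

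There is essentially no obstacle here beyond bookkeeping: the entire content has been front-loaded into Theorems \ref{log N<p(N)} and \ref{p<log n prop}, and the present statement is a clean logical synthesis of them. The only point deserving a word of care is the threshold $10080$, since Theorem \ref{p<log n prop} is stated only for XA numbers (which are by definition at least $10080$); this is handled, as above, by passing to the infinite cofinite subfamily of $\mathcal{N}$ exceeding $10080$. I would therefore expect the write-up to be a single short paragraph.
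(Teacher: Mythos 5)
Your proposal is correct and matches the paper's intent exactly: the paper states this theorem as an immediate consequence of Theorems \ref{log N<p(N)} and \ref{p<log n prop} (that infinitely many CA numbers satisfy $\log N < p(N)$, while every XA number satisfies $p < \log n$), which is precisely the contrapositive synthesis you carry out. Your additional care about the threshold $10080$ is harmless bookkeeping that the paper leaves implicit.
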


\begin{thm}\label{sigma(n)+phi(n)/n}
Let
\begin{align*}
g(n)=\frac{\sigma(n)+\varphi(n)}{n}.
\end{align*}
For two consecutive extremely abundant numbers $n=2^{k}\cdots p$ and $n'=2^{k'}\cdots p'$, if $p'\geq p$ and $\log (n'/n)>1/\log p$, then $g(n)<g(n')$ for large enough $n,\ n'\in XA$.
\end{thm}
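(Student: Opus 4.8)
The plan is to study two consecutive extremely abundant numbers through the splitting
\[
g(n')-g(n)=\left(\frac{\sigma(n')}{n'}-\frac{\sigma(n)}{n}\right)-\left(\frac{\varphi(n)}{n}-\frac{\varphi(n')}{n'}\right),
\]
and to show that the first bracket, call it $A$, exceeds the second, call it $B$, for large $n,n'$. Since $XA\subset SA$, both $n$ and $n'$ contain every prime up to their largest prime factor, so the assumption $p'\geq p$ makes the prime support of $n$ sit inside that of $n'$; consequently $B\geq0$, and the whole difficulty is that the loss $B$ in the totient term, caused by the primes newly introduced in $(p,p']$, must be beaten by the gain $A$ in the relative divisor sum.

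For the lower bound on $A$ I would use the extremal (record) property defining $XA$: since $n<n'$ are consecutive extremely abundant numbers, $f(n)<f(n')$, and therefore
\[
A=f(n')\log\log n'-f(n)\log\log n>f(n)\,(\log\log n'-\log\log n).
\]
Writing $\log\log n'-\log\log n=\log\!\left(1+\frac{\log(n'/n)}{\log n}\right)$ and applying the left inequality in (\ref{log(1+t)}) gives $\log\log n'-\log\log n>\log(n'/n)/\log n'$, so the hypothesis $\log(n'/n)>1/\log p$ yields $A>f(n)/(\log p\,\log n')$. By Proposition \ref{|f(n)-ey|<1/loglogn2} one has $f(n)>e^\gamma(1-2/\log\log n)$, so $A$ is bounded below by a quantity of order $e^\gamma/(\log p\,\log n')$.

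For the upper bound on $B$ I would exploit that $\varphi(m)/m=\prod_{q\leq p(m)}(1-1/q)$ depends only on the prime support, whence
\[
B=\frac{\varphi(n)}{n}\left(1-\prod_{p<q\leq p'}\Bigl(1-\frac1q\Bigr)\right)\leq\frac{\varphi(n)}{n}\sum_{p<q\leq p'}\frac1q,
\]
then estimate $\varphi(n)/n<\{e^\gamma\log p\,(1-0.2/\log^2 p)\}^{-1}$ by Lemma \ref{prod1/(1-1/p)>e...} and control $\sum_{p<q\leq p'}1/q$ by a Mertens-type bound in terms of $\log(\log p'/\log p)$. Passing from $\log p,\log p'$ to $\log\log n,\log\log n'$ through Corollary \ref{log an<p<log an} and Proposition \ref{p=log}, one expects $B$ to be of order $e^{-\gamma}(\log\log n'-\log\log n)/(\log\log n)^2$, so that, after the error terms are absorbed, $A/B$ is of order $e^{2\gamma}(\log\log n)^2\to\infty$. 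This is the structural reason the inequality should hold: $\sigma(n)/n\asymp e^\gamma\log\log n$ is enormous while $\varphi(n)/n\asymp e^{-\gamma}/\log\log n$ is tiny, so a common multiplicative perturbation by the new primes helps $A$ far more than it damages through $B$.

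The hard part, I expect, is precisely this last conversion between the quantity governing $B$ (the sum $\sum_{p<q\leq p'}1/q$, equivalently the jump $\log(p'/p)$ of the largest prime factor) and the quantity $\log(n'/n)$ governing $A$. The relation $p\sim\log n$ of Corollary \ref{log an<p<log an} carries a relative error of size $1/\log\log n$, and this error swamps the gain supplied by the hypothesis exactly in the delicate regime where $\log(n'/n)$ is near its admissible minimum $1/\log p$ while $p'>p$. One must therefore argue with the actual primes dividing $n$ and $n'$ rather than with the approximation $p\approx\log n$: each new prime is of size about $\log n$, so $\sum_{p<q\leq p'}1/q$ is genuinely of order $(\pi(p')-\pi(p))/\log n$, and the conditions $p'\geq p$ together with $\log(n'/n)>1/\log p$ must be used to keep the new–prime contribution to $B$ commensurate with $\log(n'/n)$. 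Making the explicit error terms in Proposition \ref{|f(n)-ey|<1/loglogn2}, Lemma \ref{prod1/(1-1/p)>e...} and Corollary \ref{log an<p<log an} all absorbable for large $n$ is where the real work lies.
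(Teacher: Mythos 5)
Your opening moves coincide with the paper's: the same decomposition $g(n')-g(n)=A-B$, the same lower bound $A>f(n)(\log\log n'-\log\log n)$ from the record property $f(n')>f(n)$, the same passage $\log\log n'-\log\log n>\log(n'/n)/\log n'$ via (\ref{log(1+t)}), and the same use of Lemma \ref{prod1/(1-1/p)>e...} to control $\varphi(n)/n$. But the proposal stops exactly at the decisive step: you never carry out the comparison of $A$ with $B$, and the heuristic you offer in its place --- that $A/B$ is of order $e^{2\gamma}(\log\log n)^2\to\infty$ --- is false precisely in the critical regime you yourself single out. When $\log(n'/n)$ sits near its admissible minimum $1/\log p$ and $p'>p$, the hypothesis gives only $A\gtrsim 1.75/(p'\log p)$, while the loss of even a single new prime factor already costs
$$
B\;\geq\;\frac{1}{p'}\prod_{q\leq p}\left(1-\frac1q\right)\;\gtrsim\;\frac{e^{-\gamma}}{p'\log p},
$$
so the two terms are of the \emph{same} order and the theorem is decided by constants, not by a divergent ratio. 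A Mertens-type estimate for $\sum_{p<q\leq p'}1/q$ cannot rescue this, since its error term $O(1/\log^2 p)$ dwarfs the quantity $\asymp 1/p$ it is supposed to capture; your own identification of $\sum_{p<q\leq p'}1/q$ with $(\log\log n'-\log\log n)/\log\log n$ is off by a factor of about $(\log\log n)^2$ here, which is exactly the growth you then claim for $A/B$.

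The paper closes the gap by reducing to the case where $p'=p_{k+1}$ is the prime immediately following $p=p_k$ (the case $p'=p$ being trivial), so that $B$ equals the single term $\frac{1}{p_{k+1}}\prod_{j\leq k}(1-1/p_j)<\frac{e^{-\gamma}}{p_{k+1}\log p_k}\bigl(1+\frac{0.2}{\log^2 p_k}\bigr)$, and by combining $f(n)>1.75$ (from $f(n)>f(10080)$), the hypothesis $\log(n'/n)>1/\log p$, and $\log n'<p'\bigl(1+\frac{0.5}{\log p'}\bigr)$ from Proposition \ref{p(1+e)<log an<p(1+e)} to get $A>\frac{1.75}{p_{k+1}\log p_k}\bigl(1+\frac{0.5}{\log p_{k+1}}\bigr)^{-1}$; the conclusion is then the numerical inequality $1.75>e^{-\gamma}\approx0.5615$. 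If you insist on allowing several new primes in $(p,p']$, as your write-up implicitly does, you must either show that consecutive extremely abundant numbers cannot skip a prime (the paper does not prove this; it silently assumes it) or show that each additional prime forces a compensating increase in $\log(n'/n)$; neither follows from the order-of-magnitude considerations in your sketch, so as it stands the proposal is an accurate plan with the essential inequality left unproved.
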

\begin{proof}
If the largest primes of $n$ and $n'$ are equal, it is clear. Let $p'=p_{k+1}>p_k=p$.
If $n>10080$ is extremely abundant, then
$$
\frac{\sigma(n)}{n\log\log n}>\frac{\sigma(10080)}{10080\log\log 10080}>1.75.
$$
Using inequality (\ref{log(1+t)}), Proposition \ref{p(1+e)<log an<p(1+e)}, Lemma \ref{prod1/(1-1/p)>e...} and Lemma \ref{pk+1<pk(1+1/log2 pk)}, we deduce for large enough $n$

\begin{align*}
\frac{\sigma(n')}{n'}+&\frac{\varphi(n')}{n'}-\frac{\sigma(n)}{n}-\frac{\varphi(n)}{n}\\
                           >&\frac{\sigma(n)}{n}\frac{\log\log n'-\log\log n}{\log\log n}-\frac{1}{p_{k+1}}\prod_{j=1}^k\left(1-\frac{1}{p_j}\right)\\
                           >&1.75\log\frac{\log n'}{\log n}-\frac{1}{p_{k+1}}\prod_{j=1}^k\left(1-\frac{1}{p_j}\right)\\
                           >&1.75\frac{\log (n'/n)}{\log n'}-\frac{1}{p_{k+1}}\prod_{j=1}^k\left(1-\frac{1}{p_j}\right)\\
                           >&1.75\frac{\log (n'/n)}{\log n'}-\frac{1}{p_{k+1}}\frac{e^{-\gamma}}{\log p_k}\left(1+\frac{0.2}{\log^2 p_k}\right)\\
                           >&1.75\frac{\log (n'/n)}{p_{k+1}(1+\frac{0.5}{\log p_{k+1}})}
                           -\frac{1}{p_{k+1}}\frac{e^{-\gamma}}{\log p_k}\left(1+\frac{0.2}{\log^2 p_k}\right)\\
                           >&\frac{1}{p_{k+1}\log p_k}\left\{\frac{1.75}{(1+\frac{0.5}{\log p_{k+1}})}
                           -e^{-\gamma}\left(1+\frac{0.2}{\log^2 p_k}\right)\right\}\\
                           >&0.
\end{align*}

\end{proof}

\begin{rem}
We checked that the assumptions in Theorem \ref{sigma(n)+phi(n)/n} hold up to 8150-th element of XA.
\end{rem}

\section{Numerical experiments}
In this section we give some numerical results for the set of extremely abundant numbers up to its 13770-th element, which is less than $C_1=s_{500,000}$ (i.e. $500,000$-th superabundant number) basing on the list provided by T. D. Noe \cite{Noe}. We examined Property 1 to 4 and Remark 1 below for the corresponding extremely abundant numbers extracted from the list.
\begin{property}
Let $n=2^{k_2}\cdots q^{k_q}\cdots r^{k_r}\cdots p$ be an extremely abundant number, where $2\leq q<r\leq p$. Then for $10080<n\leq C_1$
\begin{enumerate}[\upshape (i)]
  \item $\log n<q^{k_q+1},$
  \item $r^{k_r}<q^{k_q+1}<r^{k_r+2},$
  \item $q^{k_q}<k_qp,$
  \item $q^{k_q}\log q<\log n\log\log n<q^{k_q+2}.$
  \end{enumerate}
\end{property}
\begin{property}
Let $n=2^{k_2}\cdots x_k^k.\cdots p$ be an extremely abundant number, where $2<x_k<p$ is the greatest prime factor of exponent k. Then
$$
\sqrt{p}<x_2<\sqrt{2p},\qquad\text{for}\ \ 10080<n\leq C_1.
$$
\end{property}
\begin{property}
Let $n=2^{\alpha_2}\cdots q^{\alpha_q}\cdots p$ and $n'=2^{\beta_2}\cdots q^{\beta_q}\cdots p'$  be two consecutive extremely abundant numbers greater than $10080$. Then for $10080<n\leq C_1$\\[-3pt]
$$
\alpha_q-\beta_q\in\{-1, 0, 1\},\qquad \mbox{for all}\ \ 2\leq q\leq p'.
$$
\end{property}
\begin{property}\label{d-p}
If $m, n$ are extremely abundant and $m<n$, then for $10080<n\leq C_1$\\[-3pt]
\begin{enumerate}[\upshape (i)]
  \item $p(m)\leq p(n)$,
  \item $d(m)\leq d(n)$.
\end{enumerate}

\end{property}
\begin{rem}
Note that the latter Property does not imply that an extremely abundant number to be a highly composite one. For example,
\[
n_1=(139\sharp)(13\sharp)(5\sharp)(3\sharp)^2(2)^4,
\]
\[
n_2=(149\sharp)(13\sharp)(7\sharp)(5\sharp)(3\sharp)(2)^5,
\]
\[
n_3=(151\sharp)(13\sharp)(5\sharp)(3\sharp)^2(2)^3.
\]
$n_1, n_3$ are consecutive extremely abundant numbers and $n_3>n_2>n_1$;
\[
f(n_3)>f(n_2)\ ,\qquad \mbox{but}\qquad d(n_3)<d(n_2).
\]
\end{rem}
\begin{rem}
We note that Property \ref{d-p} is not true for superabundant numbers. For example
$$
s_{47}=(19\sharp)(3\sharp)^22,\qquad s_{48}=(17\sharp)(5\sharp)(3\sharp)2^3,\qquad p(s_{48})=17<19=p(s_{47}).
$$
and
$$
s_{173}=(59\sharp)(7\sharp)(5\sharp)(3\sharp)^22^3,\qquad s_{174}=(61\sharp)(7\sharp)(3\sharp)^22^2,\qquad \frac{d(s_{174})}{d(s_{173})}=\frac{35}{36}<1,
$$
where $s_k$ denotes $k$-th superabundant number.
\end{rem}
Using Table of superabundant and colossally abundant numbers in \cite{Noe} we have
\begin{align*}
\#&\{n\in XA: n<C\}=24,875,\\
\#&\{n\in CA: n<C\}=21,187,\\
\#&\{n\in CA\cap XA: n<C\}=20,468,\\
\#&\{n\in CA\setminus XA: n<C\}=719,\\
\#&\{n\in XA\setminus CA: n<C\}=4407,
\end{align*}
where $C=s_{1000,000}$.

\newpage
\begin{table}
\begin{tabular}{||c|c|c|c|c|c|c||}
\hline
  & $n$          &Type& $f(n)$    & $p(n)$ & $\log n$ & $k_2$\\[3pt]
  \hline
  1&$\ \quad(7\sharp)(3\sharp)2^3=10080$                                      &s & $1.75581$ & $7$ & $9.21831$ & $5$\\[3pt]
  2&$(113\sharp)(13\sharp)(5\sharp)(3\sharp)^2 2^3$&c & $1.75718$ & $113$ & $126.444$ & $8$\\[3pt]
  3&$(127\sharp)(13\sharp)(5\sharp)(3\sharp)^2 2^3$&c & $1.75737$ & $127$ & $131.288$ & $8$\\[3pt]
  4&$(131\sharp)(13\sharp)(5\sharp)(3\sharp)^2 2^3$&c & $1.75764$ & $131$ & $136.163$ & $8$\\[3pt]
  5&$(137\sharp)(13\sharp)(5\sharp)(3\sharp)^2 2^3$&c & $1.75778$ & $137$ & $141.083$ & $8$\\[3pt]
  6&$(139\sharp)(13\sharp)(5\sharp)(3\sharp)^2 2^3$&c & $1.75821$ & $139$ & $146.018$ & $8$\\[3pt]
  7&$(139\sharp)(13\sharp)(5\sharp)(3\sharp)^2 2^4$&c & $1.75826$ & $139$ & $146.711$ & $9$\\[3pt]
  8&$(151\sharp)(13\sharp)(5\sharp)(3\sharp)^2 2^3$&s & $1.75831$ & $151$ & $156.039$ & $8$\\[3pt]
  9&$(151\sharp)(13\sharp)(5\sharp)(3\sharp)^2 2^4$&c & $1.75849$ & $151$ & $156.732$ & $9$\\[3pt]
  10&$(151\sharp)(13\sharp)(7\sharp)(3\sharp)^2 2^4$&c & $1.75860$ & $151$ & $158.678$ & $9$\\[3pt]
  \hline
\end{tabular}
\caption{First 10 extremely abundant numbers ( s is superabundant, c is colossally abundant and $p_k\sharp=\prod_{j=1}^k p_j$).}\label{tabl}
\end{table}
\begin{rem}
Here the  so-called primorial of a prime $p$ is denoted by $p\sharp$ and this is an analog of the usual factorial for primes. The name was suggested by Dubner (\cite{Rib}, p. 12) and it is
\[
p_k\sharp=\prod_{i=1}^k p_i.
\]
\end{rem}
The following properties has been checked up to $C_2$ (250,000-th element of SA numbers) and for 8150-th element of XA numbers in this domain.
\begin{property}
If $n,\ n'\in SA$ are consecutive, then
$$
\frac{\sigma(n')/n'}{\sigma(n)/n}<1+\frac{1}{p\,'},\qquad(n'<C_2).
$$
\end{property}

\begin{property}\label{an+1/an>1+cloglog an 2/log an}
If $n,\ n'\in XA$ are consecutive, then
$$
\frac{n'}{n}>1+c\,\frac{(\log\log n)^2}{\log n},\qquad(0<c\leq4,\ n'<C_2),
$$
$$
\frac{n'}{n}>1+c\,\frac{(\log\log n)^2}{\sqrt{\log n}},\qquad(0<c\leq0.195,\ n'<C_2).
$$
\end{property}

\begin{property}
If $n, n'\in XA$ are consecutive, then
$$
\frac{f(n')}{f(n)}<1+\frac{1}{p\,'},\qquad (n'<C_2).
$$
\end{property}
The number of distinct prime factors of a number $n$ is denoted by $\omega(n)$ (\cite{Sandor}). From Property \ref{an+1/an>1+cloglog an 2/log an} we easily can get
$$
g(n)=\frac{n}{\omega(n)}
$$
is increasing for $n\in XA$, where $n<C_2$.\\
\begin{property}
The composition
\begin{align*}
\sigma\left(n\left\lfloor\frac{\sigma(n)}{n}\right\rfloor\right)
\end{align*}
is increasing for $n\in SA$, $n<C_2$.
\end{property}

\begin{property}
Let $g$ be
\begin{align*}
(1)&\ \frac{\sigma(n)^{\varphi(n)}}{n^n}&(2)&\ \frac{\Psi(n)^{\varphi(n)}}{n^n}.
\end{align*}
Then, $g$ is decreasing for $n\in SA$, $n<C_2$.
\end{property}

\begin{property}
Let $g$ be
\begin{align*}
(1)&\ \frac{\Psi(n)^{\sigma(n)}}{n^n}&(2)&\ \frac{\varphi(n)^{\sigma(n)}}{n^n},\qquad(a_n>a_3)\\
(3)&\ \frac{\varphi(n)^{\Psi(n)}}{n^n},\qquad(a_n>a_3,\ p(a_{n+1})\geq p(a_n)).
\end{align*}
Then, $g$ is increasing for $n\in SA$, $n<C_2$.
\end{property}

\begin{property}
Let $g$ be each of the following arithmetic functions:
\begin{align*}
(1)&\ \frac{\varphi(n)}{\varphi(\varphi(n))}&(2)&\ \frac{n}{\varphi(\varphi(n))}\\
(3)&\ d(n)\omega(n)&(4)&\ \omega(\varphi(n)).
\end{align*}
Then $g$ is increasing for $n\in XA$, $n<C_2$.
\end{property}

\begin{property}
The compositions
\begin{align*}
(1)&\ \varphi\left(n\left\lfloor\frac{\sigma(n)}{n}\right\rfloor\right)&(2)&\ \varphi\left(n\left\lfloor\frac{n}{\varphi(n)}\right\rfloor\right)
&(3)&\ \varphi\left(n\left\lfloor\frac{\Psi(n)}{n}\right\rfloor\right)
\end{align*}
are increasing for $n\in XA$, $n<C_2$.
\end{property}

\begin{property}
Let $g(m)=\lcm(1,2,\ldots,m)$. Let $n=2^{k_2}\cdots p\in SA$, then
$$
f(n)>f(g(p)),\qquad(s_{49}<n<C_2)
$$
\end{property}


\section*{Acknowledgments}
The work of the second author is supported by Center of Mathematics of the University of Porto.
We thank J. C. Lagarias, C. Calderon, J. Stopple and M. Wolf for useful
discussions and sending us some relevant references. Our sincere thanks to J. L.
Nicolas for careful reading of the manuscript, helpful comments and worthwhile
suggestions which rather improved the presentation of the paper.
The work of the first author is supported by the Calouste Gulbenkian Foundation, under
Ph.D. grant number CB/C02/2009/32. Research partially funded by the European Regional
Development Fund through the programme COMPETE and by the Portuguese Government
through the FCT under the project PEst-C/MAT/UI0144/2011. Sincere thanks to \'{E}lio Coutinho from Informatics Center of Faculty of Science of the University of Porto for providing accesses to fast computers for necessary computations.

\bibliographystyle{plain}
\bibliography{bibliojab1}

\vspace*{10mm}
\noindent Email: sdnazdi@yahoo.com\\[10pt]
Email: syakubov@fc.up.pt\\
\end{document}